\documentclass[11pt]{amsart}
\usepackage{latexsym}
\usepackage{amssymb}
\usepackage{amsfonts}
\usepackage{graphicx}
\usepackage{epsf}
\usepackage[all]{xy}
\usepackage{delarray}
\usepackage{setspace}
\linespread{1.1}


\newtheorem{theorem}{Theorem}[section]

\newtheorem{lemma}[theorem]{Lemma}
\newtheorem{proposition}[theorem]{Proposition}
\newtheorem{definition}[theorem]{Definition}

\newtheorem{remark}[theorem]{Remark}
\newtheorem{conjecture}[theorem]{Conjecture}
\newtheorem{question}[theorem]{Question}

\newcommand{\Hom}{{\rm Hom}}

\newcommand{\End}{{\rm End}}
\newcommand{\Ext}{{\rm Ext}}

\newcommand{\ann}{{\rm ann}}

\newcommand{\dm}{{\rm\underline{dim}}}


\topmargin=-3mm \evensidemargin=0in \oddsidemargin=0in


\newcommand{\cf}{{\rm cf}\,}
\newcommand{\dv}{\underline{\rm dim}\,}

\newcommand{\Aa}{\mathcal{A}}
\newcommand{\Bb}{\mathcal{B}}
\newcommand{\Cc}{\mathcal{C}}

\newcommand{\Ff}{\mathcal{F}}

\newcommand{\Mm}{\mathcal{M}}

\def\NN{{\mathbb N}}

\def\KK{{\mathbb K}}
\def\ZZ{{\mathbb Z}}

\def\cdn{{\bf cdn}}

\setlength{\parindent}{0pt}




\begin{document}

\sloppy

\title[Tame-Wild Dichotomy]{The tame-wild dichotomy conjecture for infinite dimensional algebras}


\keywords{Tame, Wild, Dichotomy, representation type}

\begin{abstract}
We prove the tame-wild dichotomy conjecture, due to D. Simson, for infinite dimensional algebras and coalgebras. The key part of the approach is proving new representation theoretic characterizations local finiteness. Among other, we show that the Ext quiver of the category ${\rm f.d.-}A$ of finite dimensional representations of an arbitrary algebra $A$ is locally finite (i.e. $\dim(\Ext^1(S,T))<\infty$ for all simple finite dimensional $A$-modules $S,T$) if and only if for every dimension vector $\underline{d}$, the representations of $A$ of dimension vector $\underline{d}$ are all contained in a finite subcategory (a category of modules over a finite dimensional quotient algebra). This allows one reduce the tame/wild problem to the finite dimensional case and Drozd's classical result. We prove a local-global principle for tame/wild (in the sense of non-commutative localization): 
a category of comodules is tame/not wild if and only if every ``finite" localization is so.  We give the relations to Simson's f.c.tame/f.c.wild dichotomy and also recover some other known results, and use the methods and various embeddings we obtain to give connections to other problems in the literature. We list several questions that naturally arise. 
\end{abstract}

\thanks{2010 \textit{Mathematics Subject Classifications}. 16T15, 16G60, 16G20; 05C38, 06A11, 16T30}

\author{M.C. Iovanov}
\address{The University of Iowa\\
12 MacLean Hall\\
Iowa City, IA 52242, USA}
\address{e-mail: miodrag-iovanov@uiowa.edu, yovanov@gmail.com} 

\date{}

\maketitle


\section{Introduction and Preliminaries}

A landmark result that lays at the foundation of the representation theory of finite dimensional algebras is Drozd's famous tame-wild dichotomy. While the study of representations of finite dimensional algebras sprung out from representation theory of finite groups in positive characteristic, and was motivated by the Brauer-Thrall conjectures, it quickly became an individual field of study, and Drozd's result can be considered to have given some direction: it shows for what kind of algebras can one hope to achieve a complete classification of representations. Stated loosely, it says that any finite dimensional algebra is either tame - which means that its representations can be completely classified by finitely many 1-parameter families, or it is wild, in which case, the representation theory of {\it every} other finite dimensional algebra can be ``found" inside that of $A$. Moreover, these two classes of algebras are mutually exclusive. Among tame algebras, the algebras of finite type - those algebras which have only finitely many isomorphism types of indecomposable representations - form a special class. These algebras have received a lot of attention over time, starting with Gabriel's now classical ADE classification result for quivers of finite type. 

\vspace{.2cm}

In the representation theory of finite dimensional tame algebras, and of algebras of finite type, often infinite dimensional algebras appear as well. The polynomial algebra $\KK[X]$ over an algebraically closed field $\KK$ is one such example; its finite dimensional representations are described by nothing else but the Jordan canonical form of a matrix, and they thus form such 1-parameter families. More generally, among others, the extended Dynkin quivers (Euclidean quivers) can be infinite dimensional, and they are also of tame type. Hence, a very natural question arises: does the tame-wild dichotomy hold for infinite dimensional algebras? This question can be reinterpreted most naturally in the language of comodules over coalgebras, where it was raised by Simson. More specifically, by an observation of Takeuchy \cite{Tak}, if $\Cc$ is a $\KK$-linear abelian category, generated by objects of finite length and whose endomorphism rings of simples are finite dimensional, then $\Cc$ is equivalent to the category of comodules over a coalgebra, or of rational modules over a pseudocompact algebra \cite{G} (see also \cite{DNR}; such a category is called of finite type by Takeuchi). This applies to the categories ${\rm f.d.-}A$ of finite dimensional $A$-modules, and respectively ${\rm l.f.-}A$ of locally finite (=sum of finite dimensional submodules) $A$-modules over an arbitrary algebra $A$: these categories are equivalent to that of finite dimensional, and respectively, arbitrary, comodules over the finite dual coalgebra $A^0$. 
There are many examples of such categories of finite type, such as rational modules over algebraic groups or group schemes, locally finite modules over quantum groups, modules over compact groups, the category $\mathcal O$ associated to a Lie algebra, categories of (co)chain complexes of vector spaces or modules over finite dimensional algebras, categories of graded modules. 
A coalgebra is tame if in the category $\Mm=C{\rm-comod}$ of finite dimensional comodules over $C$, for every fixed dimension vector $\underline{d}$ of finite length, the isomorphism types of objects (comodules) of dimension vector $\underline{d}$ are parametrized by a finite set of 1-parameter families, except for possibly finitely many such isomorphism types. A coalgebra is wild if $\Mm$ contains the representation theory of every finite dimensional algebra, in the sense that for every finite dimensional (wild) algebra $A$ there exists a representation embedding (faithful exact functor which preserves indecomposables and respects isomorphism types) of the category $A{-\rm mod}$ of finite dimensional $A$-modules into $\Mm$. One of the main questions in coalgebras, representations of infinite dimensional algebras and representation type, is the above tame/wild question which is known as

\begin{conjecture}[Simson's Infinite Tame-Wild dichotomy conjecture]
Every coalgebra $C$ is either tame or wild, but not both.
\end{conjecture}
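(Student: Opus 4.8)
The plan is to split into two regimes according to whether the Ext quiver $Q_C$ of $\Mm = C\text{-comod}$ is locally finite, and to transport Drozd's finite dimensional dichotomy to the infinite dimensional setting using the two structural results announced above: the representation-theoretic characterization of local finiteness and the local--global principle for tame/not wild.

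First I would dispose of the non-locally-finite case. Suppose $\dim \Ext^1(T,S) = \infty$ for some pair of simple comodules $S,T$. Then for every $n$ one may pick an $n$-dimensional subspace $U \subseteq \Ext^1(T,S)$, and the full subcategory of comodules that are extensions of $T^{V_a}$ by $S^{V_b}$ whose extension class lies in $\Hom(V_a,V_b)\otimes U \cong \Ext^1(T^{V_a},S^{V_b})$ is equivalent to the category of representations of the $n$-Kronecker quiver. For $n\geq 3$ this quiver algebra is wild, and the resulting functor is a representation embedding, so $C$ is wild. Hence it suffices to treat the locally finite case, where this source of wildness is absent.

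Next, assume $Q_C$ is locally finite. By the characterization of local finiteness assumed above, for each dimension vector $\underline{d}$ of finite support the comodules of dimension vector $\underline{d}$ all lie in a finite subcategory $\Mm_{\underline{d}} = A_{\underline{d}}\text{-mod}$ for a finite dimensional algebra $A_{\underline{d}}$ (concretely, comodules over a finite dimensional subcoalgebra of $C$); these finite subcategories are precisely the ``finite'' localizations of the local--global principle. Drozd's theorem applies to each $A_{\underline{d}}$, so each is tame or wild but not both. If some $A_{\underline{d}}$ is wild, its representation category embeds into $\Mm$ and $C$ is wild; if every $A_{\underline{d}}$ is tame, then since tameness for coalgebras is imposed dimension vector by dimension vector, $C$ is tame. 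This establishes existence: every $C$ is tame or wild. For exclusivity I would invoke the local--global principle, by which $C$ is not wild if and only if every finite localization is not wild, and dually $C$ is tame if and only if every finite localization is. Were $C$ both tame and wild, a wild embedding would force, inside a single dimension vector $\underline{d}$, an $n$-parameter family of pairwise non-isomorphic indecomposables for all $n$, whereas tameness bounds the classification in that dimension to finitely many one-parameter families up to finitely many exceptions; passing to $\Mm_{\underline{d}}$ this is exactly the coexistence forbidden by Drozd's exclusivity, a contradiction.

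The step I expect to be the main obstacle is the exclusivity argument. One must be certain that tameness of $C$, which is only a generic ``almost all'' statement made separately for each dimension vector, genuinely precludes a wild embedding whose image could a priori be spread across infinitely many subcoalgebras and so escape any single finite piece. This is exactly where the local--global principle must be leveraged with care to confine both the parametrizing family and the wild embedding to a common finite localization $\Mm_{\underline{d}}$, where Drozd's dichotomy settles the matter; verifying that no global family can evade detection by all finite localizations is the crux of the whole reduction.
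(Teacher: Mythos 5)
Your existence argument follows the paper's own route: the characterization of local finiteness (all comodules of a fixed dimension vector $\underline{d}$ lie in $H{\rm-comod}$ for a single finite dimensional subcoalgebra $H=\cf(\underline{d})$) combined with Drozd's theorem and co-restriction of almost parametrizing families is exactly what Lemma \ref{l.key} and Lemma \ref{l.tamelocal} do. But the step you yourself flag as the main obstacle --- exclusivity --- is genuinely missing, and nothing you have assumed fills it. What is needed is confinement of a \emph{wild embedding}, not of comodules of a fixed dimension vector: given a representation embedding $F:{\rm mod\text{-}}W\longrightarrow C{\rm-comod}$ with $W$ finite dimensional wild, one must show its entire image lies in $H{\rm-comod}$ for one finite dimensional subcoalgebra $H$. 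The paper does this via the coefficient coalgebra (Lemma \ref{l.wildlocal}): set $H=\cf(F(W))$; every finite dimensional $W$-module $N$ admits an epimorphism $W^n\rightarrow N$, and since $F$ is exact and additive, $F(N)$ is a quotient of $F(W)^n$, whence $\cf(F(N))\subseteq \cf(F(W)^n)=\cf(F(W))=H$. With this, exclusivity is immediate: were $C$ both tame and wild, $H$ would be wild, yet also tame because tameness passes to subcoalgebras (Remark \ref{r.typo}), contradicting Drozd's exclusivity for the finite dimensional $H$. Your pigeonhole sketch (``a wild embedding forces an $n$-parameter family inside a single dimension vector'') secretly requires exactly this fact: without knowing that the composition factors of all the $F(N)$ come from the finitely many simples of $F(W)$, the images can spread over infinitely many dimension vectors of bounded total dimension, and no single finite piece detects them. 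Moreover, even after pigeonholing you would still need the nontrivial counting argument that finitely many $1$-parameter families cannot cover a $2$-parameter family; the paper's route avoids this entirely by delegating all exclusivity to Drozd's theorem applied to $H$.

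A second, structural problem: the ``local--global principle for finite localizations'' you invoke cannot be used as an ingredient here. In the paper, finite localizations are the quotient categories $eCe{\rm-Comod}$ of Section \ref{s.tw}, which are \emph{not} the same as the Serre subcategories $H{\rm-comod}$ attached to finite dimensional subcoalgebras that actually carry the proof; and Theorem \ref{t.main3} is \emph{deduced from} the dichotomy theorem, not the other way around, so leaning on it makes your argument circular. The legitimate tool is locality with respect to finite dimensional subcoalgebras (Lemmas \ref{l.wildlocal} and \ref{l.tamelocal}), and the coefficient coalgebra computation above is precisely the missing idea that establishes the wild half of that locality.
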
   

As noted before, this conjecture contains the question about the tame wild dichotomy for infinite dimensional algebras as a particular case. There has been a lot of work done on this conjecture or motivated by it (see \cite{JMN, KS,L1,L2,SS,Sh,Sh2}; \cite{si1}-\cite{Si13}), as well as work on other related various tame-wild dichotomy problems (see e.g. \cite{DG,Ri1,Ri2,RS1,RS2}). A weak version of this tame-wild dichotomy (the ``half" infinite tame-wild dichotomy) is given in \cite{si7} (see also \cite{si4}) where it is shown that a tame coalgebra is not wild. The most general result to date on the infinite tame-wild dichotomy is obtained by Simson in  \cite{si11}; it is proved there that the tame-wild dichotomy holds for one-sided semiperfect coalgebras, as well as for the so called Hom-computable coalgebras (see Sections \ref{s.main} and \ref{s.fc} for precise definitions), thus extending the classical tame-wild dichotomy from finite dimensional algebras to an important class of coalgebras and categories of finite type. To achieve this, an interesting approach is employed in \cite{si11}: the keen observation there is that the problem can be re-interpreted,  for these classes of coalgebras, as a problem about certain subcategories of comodules called finitely copresented, which are equivalent in those cases to categories of modules over finite dimensional algebras, at which point Drozd's result can be applied. This reduction \cite{si11} is nevertheless not at all straightforward, but a consequence of deep technical work.

\vspace{.2cm}

Our first main result is to prove this conjecture, and hence the tame-wild dichotomy for infinite dimensional algebras, in its full generality. This is done in Section \ref{s.main}. Our method is different from those in \cite{si11}. It yields a self-contained approach to the full conjecture: we show that, in general, the ``tame" and ``not wild" properties are local in the sense that, for any coalgebra, they can be tested for finite dimensional subcoalgebras. For this, we use the notion of coefficient coalgebra. More specifically, the strategy is as follows: when showing that the ``not wild" property is local (in the above sense), one notices that for any representation embedding $F:A{\rm-mod}\longrightarrow \Mm$, where $\Mm=C{\rm-comod}$ is the category of finite dimensional $C$ comodules, the ``image" of $F$ must lie inside the subcategory of $D$-comodules, where $D$ is the coefficient coalgebra of $F(A)$; this $D$ is then finite dimensional since $F(A)$ is so. Such ideas are present and used already in \cite{si7} proving the weak (one half) tame-wild dichotomy; we use similar techniques here as part of our general local-global principle for tame and wild. The main novelty is the fact that the above mentioned ``locality" holds for tameness; for this, a key fact that we show is that if every finite dimensional subcoalgebra $D$ of $C$ is tame, then the $C$-comodules of a fixed dimension vector $\underline{d}$ will all have their coefficient coalgebra contained in a certain finite dimensional subcoalgebra $H(\underline{d})$ of $C$, which depends only on $\underline{d}$. In fact, we observe a more general statement (Lemma \ref{l.key}), that actually gives new characterizations of another important concept, that of locally finite coalgebras. These coalgebras correspond to categories of finite type whose Ext quiver is locally finite, i.e. has finitely many arrows between any two vertices; Lemma \ref{l.key} states that $C$ is locally finite if and only if for each dimension vector $\underline{d}$, there is a finite dimensional subcoalgebra $H$ of $C$ such that all comodules of this dimension vector are realized over $H$ (i.e. belong to $H{\rm-comod}$, equivalently, their coefficient coalgebra is contained in this $H$).

\vspace{.2cm}

We also examine the ``not wild" and ``tame" notions vis-a-vis noncommutative localization. Localization in coalgebras, in the sense of general localization in Grothendieck categories introduced by Gabriel \cite{Ga}, has been studied by many authors \cite{CGT,CNO,JMN,JMN2,NT1,NT2}, and its relation to studying representation type of algebras is not new. For categories of finite type $\Mm=C{\rm-Comod}$ it takes an easily described form in terms of idempotents $e$ in the dual algebra $C^*$, thanks to work of \cite{CGT}; for each such idempotent $e\in C^*$ a natural coalgebra $eCe$ is formed (analogous to the corner ring $eC^*e$) and the localization is then described as a functor $(-)e:C{\rm-Comod}\longrightarrow eCe{\rm-Comod}$. In Section \ref{s.tw}, we obtain our next main result: we show that, in fact, the notions of ``not wild" and ``tame" are also local in the sense of localization: that is, a coalgebra is tame/not wild if and only if every localization at a {\it finite} idempotent $e$, is tame/not wild; here, finite idempotent means an idempotent where only finitely many simple objects remain in $eCe{\rm -Comod}$ after localization. In the non-commutative setting, such finite localizations are the natural counterpart of commutative localization at maximal ideals: in the commutative case, there are no extensions between different simple objects, and so, to obtain all such finite localizations, one essentially only needs to consider localizations after which only one simple module remains. Equivalently, in the commutative case, to understand modules of finite length one only needs to localize at one maximal at a time.\\
Results concerning such preservation of tame and wild properties under localization for coalgebras were obtained before, for example, in \cite{JMN} under some conditions. In our general equivalent characterizations of tame/wild properties via finite localizations, the first main result on the tame-wild dichotomy is used. Furthermore, we give the relation to Simson's fc-tame/fc-wild dichotomy in Section \ref{s.fc}. In fact, the method in \cite{si11} is also related to localization; we show that the functors and categories constructed there, are in fact ``co-localization", which behaves well exactly in the case of Hom-computable coalgebras.

\vspace{.2cm}

We also give a few embedding results between the categories of {\it arbitrary} (co)modules of wild finite or infinite dimensional (co)algebras. We show that the category of locally finite modules over the free algebra in countably many variables $\KK\langle X_1,X_2,\dots,X_n,\dots\rangle$ embeds (in the sense of representation embedding), and then consequently, the category $C{\rm-Comod}$ for any conutable dimensional coalgebra, into the category of arbitrary modules of any fully wild finite dimensional algebra, and in particular, for any two fully wild coalgebras of at most countable dimension $C,D$ the categories $C{\rm-Comod}$ and $D{\rm-Comod}$ of all comodules over $C$ and $D$ respectively, can be representation embedded into each other. However, as far as embedding the category of finite dimensional comodules of a coalgebra, we show that given a coalgebra $C$, there is a representation embedding of $D{\rm-comod}$ into $C{\rm-comod}$ (between categories of finite dimensional comodules) for every countable dimensional coalgebra $D$ precisely when $C$ is not locally finite, a fact that gives yet another representation theoretic interpretation and characterization of the local finiteness of the Ext quiver. These results are contained in Section \ref{s.we} and Theorem \ref{t.main2}.

\vspace{.2cm}

We end by listing a number of questions regarding the f.c. tame-wild dichotomy  and relations to tame-wild in general, and  embeddings of categories of finite dimensional representations of and/or comodules over arbitrary quivers in Section \ref{s.bt3}, and examine also possible relations to another conjecture of D. Simson, which may be also referred to as a Brauer-Thrall 3 (BT3) conjecture \cite{Si12}

\vspace{.2cm}

We refer the reader for terminology on coalgebras to standard textbooks such as \cite{DNR,R0}, and as well as to \cite{ASS,ARS} for basics of the representation theory of finite dimensional algebras. For categories of modules or comodules we adopt the notations and conventions of \cite{si11}, which are standard to representation theory. With a general audience in mind, we recall most notions, terminology and results used throughout the paper. In this respect, using the previous observations on the algebra/coalgebra language, the reader interested in representations of algebras, can replace, in any statement, the category of all comodules, and respectively, finite dimensional comodules of a coalgebra $C$, by the category of locally finite modules, and respsectively, finite dimensional modules (representations) of an algebra $A$.



\section{The infinite tame-wild dichotomy}\label{s.main}

We fix the algebraically closed field $\KK$, and a linear category of finite type $\Mm$, so that $\Mm=C{\rm-Comod}$, the category of left comodules over some basic coalgebra $C$. By $C{\rm-comod}$ we denote the subcategory of objects of finite length in $M$, equivalently, of finite dimensional $C$-comodules. Let $S(i)_{i\in I}$ a system of representatives of the simple left $C$-comodules such that the coradical of $C$ is ${\rm soc}(C)=C_0=\bigoplus\limits_{i\in I}S(i)$. There is an indecomposable decomposition $C=\bigoplus\limits_{i\in I}E(i)$ of the left comodule $C$ (=the minimal injective cogenerator of $\Mm$), where each $E(i)$ is an injective hull of $S(i)$ contained in $C$. For any finite dimensional $C$-comodule $M$, let $\dm(M)\in\NN^{(I)}$ denote its dimension vector. We recall that if $A$ is an algebra, a $C$-$A$-bicomodule $L$ is a left $C$-comodule and right $A$-module satisfying the obvious compatibility condition that the comultiplication map is a morphism of $A$-modules, equivalently, $A$-multiplication is a morphism of comodules, equivalently, $L$ is a $C^*$-$A$-bimodule which is rational as a $C^*$-module. The following definition is classical for finite dimensional algebras, and is contained in \cite{si3,si4}. 

\begin{definition}\label{d.1}
(i) The coalgebra $C$ is called of tame representation type if the category $C{\rm-comod}$ of finite dimensional left $C$-comodules is of tame type, that is, for every dimension vector $v\in \NN^{(I)}$, there is an almost parametrizing family of $\KK[T]$-$C$-bicomodules $L_1,\dots,L_{r}$ (left $C$-comodules and right $\KK[T]$-modules such that comultiplication is colinear, equivalently, the $L_i$ are $\KK[T]$-$(C^*)^{{\rm op}}$-bimodules), which are finitely generated free as $\KK[T]$-modules (with $r=r_v$ depending on $v$), meaning that all but possibly finitely many left $C$-comodules $M$ of dimension vector $\dm(M)=v$ are of the form 
$$M\cong L_s \otimes_{\KK[T]} \frac{\KK[T]}{(T-\lambda)}$$
for some $1\leq s\leq r=r_v$ and some $\lambda\in \KK$.\\
(ii) The coalgebra $C$ is called of wild (representation) type (or simply wild) if there is a representation embedding $F:{\rm mod-} \Gamma_3(\KK)\longrightarrow C{\rm-comod}$ of the category o finite dimensional modules over the algebra $\Gamma_3(\KK)=\left(\begin{array}{cc}\KK & \KK^3 \\ 0 & \KK \end{array}\right)$ into $C{\rm-comod}$ (here,  by representation embedding we mean an exact functor $F$ which preserves indecomposables and  respects isomorphism type, i.e. $F(X)\cong F(Y)$ implies $X\cong Y$; see \cite{ASS,SS}).
\end{definition}




To better explain our main result, we introduce the following notion of local property: this will be a property of a coalgebra (or a category of finite type $\Mm$) which can be checked by reducing to finite dimensional subcoalgebras (or finite subcategories of $\Mm$).

\begin{definition}
A property ${\mathcal P}$, which applies to coalgebras, is said to be a local property if given a coalgebra $C$, then $C$ has property ${\mathcal P}$ if and only if every finite dimensional subcoalgebra $H$ of $C$ has property ${\mathcal P}$.
\end{definition}

For example, the property of being cosemisimple is a local property \cite[Chapter 3]{DNR}; that is, semisimplicity of $C{\rm-comod}$ is equivalent to semisimplicity of any subcategory $H{\rm-comod}$ for finite dimensional $H\subseteq C$. Similarly, if $n$ is a fixed positive integer, having Loewy length at most $n$ is a local property, and so is being serial as a coalgebra (i.e. category of finite dimensional comodules is serial; for basic definitions, see \cite{CGT2,LS,I3}). On the other hand, many (or rather, most) properties are not local: such are having finite Lowey length, being one-sided semiperfect (i.e. injective indecomposable comodules are finite dimensional), being locally finite, artinian, co-Noetherian (see \cite{CGT,CNO,GTNT,I5,WW} for definitions and properties), etc. Before we give the first result of interest here, we need to recall some standard coalgebra/representation theory terminology.

\subsection*{Coefficient coalgebra}

If $M$ is a left $C$-comodule with comultiplication map $\rho:M\rightarrow C\otimes M$, then there is a smallest subcoalgebra $H$ of $C$ such that $\rho(M)\subseteq H\otimes M$, and this coalgebra is called the {\it coefficient coalgebra} of $M$ and will be denoted by $\cf(H)$ (see \cite[Chapter 2]{DNR}). It is the dual of the notion of annihilator: indeed, regarded as a right module over the dual convolution algebra $C^*$, the annihilator of $M$ is ${ann}_{C^*}(M)=(\cf(M))^\perp\subseteq C^*$, where, as usual, for $X\subseteq C$, $X^\perp$ consists of the maps in $C^*$ which vanish on $X$  (and similarly, if $Y\subseteq C^*$, then $Y^\perp=\{x|\,f(x)=0, \,\forall \,f\in Y\}$). The representation theoretic connection is that if $\eta:A\rightarrow \End_\KK(M)$ is a finite dimensional representation of $A$, then the coefficients $\eta_{i,j}\in A^*$ (in some fixed basis of $M$) of this representation span a finite dimensional subcoalgebra of the representative coalgebra $A^0$ (also called finite dual coalgebra) of $A$. We will use the fact that if $M$ is finite dimensional, then $\cf(M)$ is finite dimensional too; moreover, if $0\rightarrow X\rightarrow Y\rightarrow Z\rightarrow 0$ is an exact sequence, then $\cf(X)\subseteq \cf(Y)$ and $\cf(Z)\subseteq \cf(Y)$. For a left (or right) coideal $V$ (subcomodule) of $C$ it is obvious that $V\subseteq \cf(V)$.

The following result is essentially contained \cite[Theorem 6.7(d)]{si7}; nevertheless, the proof there seems to be more involved, as the author remarks, also ``involving rather lengthy arguments used in the proof of" \cite[Theorem 6.10]{si3}. We provide here a simplified short argument based on the above concept.

\begin{lemma}\label{l.wildlocal}
The property ``not wild" is a local property.
\end{lemma}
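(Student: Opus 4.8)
The plan is to prove both implications of the asserted equivalence, one of which is essentially formal while the other carries the real content. The main tool throughout is the behavior of the coefficient coalgebra $\cf(-)$ under exact sequences and direct sums, as recalled just before the statement.

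First I would dispose of the easy direction: if some finite dimensional subcoalgebra $H\subseteq C$ is wild, then $C$ is wild. The point is that for any subcoalgebra $H\subseteq C$ the inclusion $H{\rm-comod}\hookrightarrow C{\rm-comod}$ is a representation embedding. Indeed, every $H$-comodule is a $C$-comodule via $\rho:M\ra H\otimes M\hookrightarrow C\otimes M$, and $H{\rm-comod}$ is a \emph{full} subcategory of $C{\rm-comod}$ which is closed under subobjects and quotients, since for any $C$-subcomodule $N$ of an $H$-comodule $M$ one has $\cf(N)\subseteq\cf(M)\subseteq H$. Hence the inclusion is exact and fully faithful; and a fully faithful exact functor automatically preserves indecomposables (it induces isomorphisms on endomorphism rings) and respects/reflects isomorphism types. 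Composing a representation embedding ${\rm mod-}\Gamma_3(\KK)\ra H{\rm-comod}$ with this inclusion yields one into $C{\rm-comod}$. Taking contrapositives gives ``$C$ not wild $\dr$ every $H$ not wild''.

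The substantive direction is the converse, which I would prove by contrapositive: if $C$ is wild then some finite dimensional subcoalgebra is wild. So suppose $F:{\rm mod-}A\ra C{\rm-comod}$ is a representation embedding, where $A=\Gamma_3(\KK)$. The key observation is that the regular module $A$ is a projective generator of ${\rm mod-}A$: every finite dimensional $A$-module $X$ is a quotient of some $A^n$. Since $F$ is additive and right exact, $F(X)$ is a quotient of $F(A^n)\cong F(A)^n$. Using that $\cf$ is monotone under quotients and that $\cf$ of a direct sum of copies of one comodule is unchanged, we obtain, for \emph{every} $X$,
$$\cf(F(X))\subseteq\cf\big(F(A)^n\big)=\cf(F(A))=:D.$$
Crucially, $D=\cf(F(A))$ is finite dimensional because $F(A)$ is a finite dimensional comodule. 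Thus every object $F(X)$ is in fact a $D$-comodule, so $F$ factors as ${\rm mod-}A\xrightarrow{\,\bar F\,}D{\rm-comod}\hookrightarrow C{\rm-comod}$, and since the inclusion $D{\rm-comod}\hookrightarrow C{\rm-comod}$ is fully faithful and exact (as in the first paragraph), $\bar F$ inherits exactness, preservation of indecomposables and respect for isomorphism types. Hence $\bar F$ is a representation embedding and the finite dimensional subcoalgebra $D$ is wild.

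The crux of the whole argument — and the step I expect to be the main obstacle — is realizing that the entire (infinite) image of $F$ is corralled into a single finite dimensional subcoalgebra; this is exactly what the projective-generator property of $\Gamma_3(\KK)$ together with the monotonicity of $\cf$ delivers, and it is what makes the ``not wild'' property local. The remaining verifications (exactness and full faithfulness of the inclusion functors, and that restricting the codomain preserves the defining properties of a representation embedding) are routine.
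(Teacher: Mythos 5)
Your proof is correct and follows essentially the same route as the paper: the easy direction via the fully faithful exact co-restriction-of-scalars embedding $H{\rm-comod}\hookrightarrow C{\rm-comod}$, and the substantive direction by taking $D=\cf(F(A))$ for the regular module $A$, using that every finite dimensional module is a quotient of $A^n$ together with exactness of $F$ and monotonicity of $\cf$ to corral the image of $F$ into $D{\rm-comod}$. The only cosmetic difference is that the paper phrases the argument for a representation embedding from ${\rm mod\text{-}}W$ with $W$ an arbitrary wild finite dimensional algebra, while you work directly with $\Gamma_3(\KK)$; this changes nothing.
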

\begin{proof}
This means that $C$ is not wild if and only if for every finite dimensional subcoalgebra $H$ of $C$, we have: ``$H$ is not wild". Obviously, if $H$ is a wild subcoalgebra of $C$, then $C$ is wild because of the full, faithful representation embedding $H{\rm-comod}\hookrightarrow C{\rm-comod}$ induced by the inclusion $H\subset C$ (the co-restriction of scalars; see \cite{DNR,Tak}). \\
Conversely, assume $C$ is wild; we show that there is some finite dimensional wild subcoalgebra $H$ of $C$. Let $F:{\rm mod-}\,W\longrightarrow C{\rm-comod}$ be an (exact) representation embedding with $W$ a wild finite dimensional algebra. Let $H=\cf(F(W))$; it is a finite dimensional subcoalgebra of $C$. If $N$ is any other finite dimensional $W$-module, then there is an epimorphism of right $W$-modules $W^n\rightarrow N\rightarrow 0$, and so, using exactness, $F(N)$ is a quotient of $F(W)^n$ ($F$ is necessarily also additive; \cite{ASS}). Hence, $\cf(F(N))\subseteq H=\cf(F(W)^n)=\cf(F(W))=H$, and therefore $N$ is an $H$-comodule. This means that the functor $F$ co-restricts to $F:{\rm mod-}W\longrightarrow H{\rm-comod}$, and since $H{\rm-comod}$ is a full exact subcategory of $C{\rm-comod}$, this shows that $F$ is a representation embedding and so $H$ is wild.
\end{proof}

We note that arguments of somewhat similar flavor appeared in \cite{si7} (see also \cite{si3}) where the so called weak tame-wild dichotomy is proved (see also Remark \ref{r.typo}). 


\subsection*{Tame as a local property}

In order to prove that ``tameness" is a local property, we need to recall a few facts and give a new characterization of locally finite coalgebras. We begin with the following remark which known for the most part.

\begin{remark}[The coefficient coalgebra]
If $M$ is a finite dimensional left $C$-comodule with comodule map $\rho:M\rightarrow C\otimes M$, and basis $\{x_i | i=1,\dots,n\}$, write $\rho(x_i)=\sum\limits_{j=1}^n c_{ij}\otimes x_j$; then $\cf(M)={\rm Span}\{c_{ij}|i,j\}$ for example by \cite[2.5.4]{DNR}. This shows easily that if $M$ is an arbitrary left comodule, $\cf(M)=\sum\limits_{X{\rm\,f.d.\,}\subseteq M}\cf(X)$ - the sum of over all finite dimensional subcomodules of $M$. Thus, if $\ann_{C^*}(M)$ is the anihilator of $M$ over the dual algebra $C^*$, we see that $\cf(M)^\perp=\ann_{C^*}(M)$. The inclusion $\subseteq$ is easy and the for converse, if $a\in C^*$ is such that $Ma=0$, then for any finite dimensional subcomodule $X$ of $M$, pick a basis $x_i$ as above, and note that $x_i\cdot a=\sum\limits_{j=1}^n a(c_{ij})x_j=0$ for all $i$ so $a(c_{ij})=0$ for all $i,j$; hence $a$ is $0$ on $\cf(X)$ for all finite dimensional $X\subseteq M$. Thus, $a\in\cf(M)^\perp$. Note that in \cite[Proposition 2.5.3]{DNR} only the ``weaker" statement $\cf(M)=\ann_{C^*}(M)^\perp$ is proved; one can also use this to prove that $\cf(M)^\perp=\ann_{C^*}(M)$ by arguing that $M$ is a rational $C^*$-module and so $\ann_{C^*}(M)$ is closed in the finite topology of $C^*$. 
\end{remark}

We recall that for subspaces $U,W$ of $C$, the space $V\wedge W$ is defined as $V\wedge W=\{x\in C| \Delta(x)\in V\otimes C+C\otimes W\}=\Delta^{-1}(V\otimes C+C\otimes W)$ (see \cite[Section 2.5]{DNR}, \cite[Chapter 5]{Sw} or \cite{R0}). We note the following representation theoretic meaning of the wedge, relevant to extensions, which we believe is ``morally" known. 

\begin{proposition}
Let $Y$ be a left $C$-comodule, and $U,W$ two subcoalgebras of $C$. There is an exact sequence $0\rightarrow X\rightarrow Y\rightarrow Z\rightarrow 0$ of left $C$-comodule with $\cf(X)\subseteq U$ and $\cf(Z)\subseteq W$ if and only if $\cf(Y)\subseteq W\wedge U$. In particular, $\cf(Y)\subseteq \cf(Z)\wedge \cf(X)$.\\
Similarly, if $Y$ is a right $C$-comodule, the existence of such a sequence is equivalent to $\cf(Y)\subseteq U\wedge W$; in particular, in that case, $\cf(Y)\subseteq \cf(X)\wedge\cf(Z)$.
\end{proposition}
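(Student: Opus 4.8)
The plan is to translate everything into the dual algebra $C^*$ and use the standard relation between the wedge and products of orthogonal complements. Recall from the preceding remark that a left $C$-comodule is the same as a rational right $C^*$-module, via $y\cdot a=\sum a(y_{(-1)})y_{(0)}$, that $\cf(M)^\perp=\ann_{C^*}(M)$, and hence that for a subcoalgebra $H$ one has $\cf(M)\subseteq H$ if and only if $M\cdot H^\perp=0$. Two facts will be used repeatedly: since $W$ and $U$ are subcoalgebras, $W^\perp$ and $U^\perp$ are two-sided ideals of $C^*$; and, by the standard duality between the wedge and the convolution product (\cite[Section 2.5]{DNR}, \cite[Chapter 5]{Sw}), $(W\wedge U)^\perp=W^\perp U^\perp$, where the right-hand side denotes the product ideal in $C^*$ (taken with closure in the finite topology if necessary). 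The easy inclusion $W^\perp U^\perp\subseteq (W\wedge U)^\perp$ is an immediate computation from the definitions of $\wedge$ and of the convolution product, and this is the only inclusion needed for one of the two implications.

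For the forward implication, suppose $0\ra X\ra Y\ra Z\ra 0$ is exact with $\cf(X)\subseteq U$ and $\cf(Z)\subseteq W$; equivalently $X\cdot U^\perp=0$ and $Z\cdot W^\perp=0$. I would show $Y\cdot(W^\perp U^\perp)=0$ in two steps. First, for $f\in W^\perp$, the quotient map $\pi\colon Y\ra Z$ is $C^*$-linear, so $\pi(Y\cdot f)=Z\cdot f=0$, whence $Y\cdot f\subseteq\Ker\pi=X$. Second, for $g\in U^\perp$ this gives $(Y\cdot f)\cdot g\subseteq X\cdot g=0$. As $Y\cdot(f\ast g)=(Y\cdot f)\cdot g$, this yields $W^\perp U^\perp\subseteq\ann_{C^*}(Y)$. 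Since $\ann_{C^*}(Y)$ is closed and equals $\cf(Y)^\perp$, the identity $(W\wedge U)^\perp=W^\perp U^\perp$ gives $(W\wedge U)^\perp\subseteq\cf(Y)^\perp$, i.e. $\cf(Y)\subseteq W\wedge U$, using that any subspace of $C$ is its own double orthogonal complement.

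For the converse, assume $\cf(Y)\subseteq W\wedge U$, i.e. $Y\cdot(W\wedge U)^\perp=0$, and construct the sequence explicitly. Set $X=Y\cdot W^\perp$ and $Z=Y/X$. Because $W^\perp$ is a two-sided ideal, $X$ is a $C^*$-submodule, hence a subcomodule, giving an exact sequence $0\ra X\ra Y\ra Z\ra 0$. Then $Z\cdot W^\perp=(Y\cdot W^\perp+X)/X=0$, so $\cf(Z)\subseteq W$. Moreover $X\cdot U^\perp=Y\cdot(W^\perp U^\perp)\subseteq Y\cdot(W\wedge U)^\perp=0$, using only the easy inclusion $W^\perp U^\perp\subseteq(W\wedge U)^\perp$, so $\cf(X)\subseteq U$. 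The ``in particular'' statement then follows by taking $U=\cf(X)$ and $W=\cf(Z)$ for any given exact sequence.

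Finally, the right-comodule version is the exact mirror image: a right $C$-comodule is a rational left $C^*$-module, so the convolution acts on the left with $(a\ast b)\cdot y=a\cdot(b\cdot y)$; running the same two arguments with the roles of the two ideals interchanged produces the wedge in the opposite order, namely $\cf(Y)\subseteq U\wedge W$. The step I expect to be the only delicate point is the use of the duality $(W\wedge U)^\perp=W^\perp U^\perp$ in the forward direction, where one must be careful that the product ideal need only agree with $(W\wedge U)^\perp$ after closure in the finite topology of $C^*$; since $\ann_{C^*}(Y)$ is automatically closed, this causes no difficulty. Keeping the order of the wedge consistent with the left/right module convention is the other place where a bookkeeping error could easily creep in.
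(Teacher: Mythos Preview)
Your proof is correct and follows essentially the same line as the paper's: both directions pass through the duality $\cf(M)^\perp=\ann_{C^*}(M)$ and the relation between $W\wedge U$ and $W^\perp U^\perp$. The only difference worth noting is in the converse: the paper takes $X=T_U(Y)$, the largest subcomodule with $\cf(X)\subseteq U$ (the pre-torsion), and then checks that $YW^\perp\subseteq X$ to get $\cf(Z)\subseteq W$; you instead take $X=Y\cdot W^\perp$ directly, which makes $\cf(Z)\subseteq W$ immediate and leaves $\cf(X)\subseteq U$ to be verified via $X\cdot U^\perp=Y\cdot(W^\perp U^\perp)=0$. These are dual choices and your version is arguably more elementary, avoiding any explicit appeal to the torsion functor.
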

\begin{proof}
If such a sequence exists, then $W^\perp\cdot U^\perp\subseteq \ann_{C^*}(Z)\ann_{C^*}(X)\subseteq \ann_{C^*}(Y)$ (everything is a right $C^*$-module), and taking orthogonals, we get $W\wedge U=(W^\perp \cdot U^\perp)^\perp\supseteq \ann_{C^*}(Y)^\perp=\cf(Y)$, where the first equality follows, for example, by \cite[Lemma 2.5.7]{DNR}.\\
Conversely, $\cf(Y)\subseteq W\wedge U$ means $\ann_{C^*}(Y)\supseteq (W\wedge U)^\perp =(W^\perp U^\perp)^\perp{}^\perp\supseteq W^\perp U^\perp$ (or, one can easily see directly that $W^\perp U^\perp\subseteq \cf(Y)^\perp=\ann_{C^*}(Y)$ by hypothesis and the definition of $W\wedge U$). Let $T_U$ be the pre-torsion functor associated with $U$ and let $X=T_U(Y)$; this means $X$ is the largest subcomodule of $Y$ whose coefficient coalgebra is contained in $U$, equivalently, the largest $C^*$-submodule of $Y$ annihilated by $U^\perp$ (see \cite{NT1} or \cite[Section 2.5]{DNR}). Let $Z=Y/X$; then $X\cdot U^\perp =0$ and $Y W^\perp \subseteq X$ by the definition of $X$, since $(YW^\perp) U^\perp=0$. This implies $(Y/X)W^\perp=0$. This means $\cf(X)\subseteq U$, $\cf(Z)\subseteq W$ and so $0\rightarrow X\rightarrow Y\rightarrow Z\rightarrow 0$ is the required sequence.
\end{proof}

We recall that a coalgebra is said to be {\it locally finite} (see \cite{HR}) if for every two finite dimensional vector subspaces $V,W$ of $C$, the space $V\wedge W$ is finite dimensional as well. A comodule $M$ is said to be {\it quasifinite} \cite{Tak} if $\Hom(S,M)$ is finite dimensional for every simple (equivalently, every finite dimensional) comodule $S$. This is equivalent to asking that the multiplicity of $S$ in $Soc(M)$ is finite. \cite[Lemma 1.2]{I2} gives a few equivalent characterizations of locally finite coalgebras; in particular, of interest is that $C$ is locally finite if and only if $U\wedge W$ is finite dimensional for every finite dimensional subcoalgebras $U,W$ of $C$, if and only if $\Ext^1_C(S,T)$ is finite dimensional for every two (left/right) simple (equivalently, finite dimensional) $C$-comodules $S,T$. We give another new more representation theoretic characterization which will be the key of our result. Recall that if $C$ is a pointed coalgebra, that is, $\dim(S(i))=1$ for all $i$, the dimension vector of a finite dimensional comodule $M$ is defined as $\dv(M)\in \NN^{(I)}$ given by letting $\dv(M)_i$ equal the multiplicity of $S(i)$ in a Jordan-H${\rm \ddot{o}}$lder series of $M$. By extension, this vector can defined for any comodule over a coalgebra $C$ over any field $\KK$ as a vector recording multiplicities of simples. For some vector $\underline{d}\in\NN^{(I)}$ we define the ``coefficients" coalgebra $\cf(\underline{d})$ of $\underline{d}$ to be the smallest subcoalgebra of $C$ over which every comodule of dimension vector $\underline{d}$ can be realized; that is
$$\cf(\underline{d})=\sum\limits_{\dv(M)=\underline{d}}\cf(M)$$

The key step is the next 

\begin{lemma}\label{l.key}
Let $C$ be a coalgebra. Then $C$ is locally finite if and only if $\cf(\underline{d})$ is finite dimensional for every $\underline{d}\in\NN^{(I)}$.
\end{lemma}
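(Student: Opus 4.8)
The plan is to prove both implications using the characterization of extensions by the wedge operation established in the preceding proposition, together with the standard equivalent forms of local finiteness recalled above (in particular, that $C$ is locally finite iff $U\wedge W$ is finite dimensional for all finite dimensional subcoalgebras $U,W$, iff $\Ext^1_C(S,T)$ is finite dimensional for all simple comodules $S,T$, see \cite{I2}). The central observation is that the coefficient coalgebra of a finite length comodule is trapped inside an iterated wedge of coradical pieces determined by its composition factors.

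For the direction ``$C$ locally finite $\Rightarrow \cf(\underline{d})$ finite dimensional'', I would fix $\underline{d}\in\NN^{(I)}$, set $n=\sum_i \underline{d}_i$ and $C_0'=\bigoplus_{i\in{\rm supp}(\underline{d})}\cf(S(i))$, a finite dimensional (cosemisimple) subcoalgebra. Any comodule $M$ with $\dv(M)=\underline{d}$ admits a composition series $0=M_0\subset M_1\subset\dots\subset M_n=M$ whose factors $M_k/M_{k-1}$ are among the $S(i)$, $i\in{\rm supp}(\underline{d})$, so $\cf(M_k/M_{k-1})\subseteq C_0'$. Applying the preceding proposition to $0\to M_{k-1}\to M_k\to M_k/M_{k-1}\to 0$ gives $\cf(M_k)\subseteq \cf(M_k/M_{k-1})\wedge\cf(M_{k-1})$, and an induction using monotonicity and associativity of $\wedge$ yields $\cf(M)\subseteq C_0'\wedge C_0'\wedge\dots\wedge C_0'$ ($n$ factors). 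Local finiteness and an easy induction show this $n$-fold wedge power $H(\underline{d})$ of the finite dimensional space $C_0'$ is finite dimensional; since it is independent of $M$, we conclude $\cf(\underline{d})=\sum_{\dv(M)=\underline{d}}\cf(M)\subseteq H(\underline{d})$ is finite dimensional.

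For the converse I would verify the $\Ext$ form of local finiteness. Fix simple comodules $S=S(i)$, $T=S(j)$ and put $\underline{d}=\dv(S)+\dv(T)$, so that $H:=\cf(\underline{d})$ is finite dimensional by hypothesis. Every extension $0\to T\to E\to S\to 0$ has $\dv(E)=\underline{d}$, hence $\cf(E)\subseteq H$ and the whole sequence lies in the full subcategory $H{\rm-comod}$; in particular $S,T\in H{\rm-comod}$. Thus every class of $\Ext^1_C(S,T)$ is represented by a short exact sequence over the finite dimensional coalgebra $H$, so the natural map $\Ext^1_H(S,T)\to\Ext^1_C(S,T)$ is surjective. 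Since $H{\rm-comod}$ is equivalent to the category of finite dimensional modules over the finite dimensional algebra $H^*$, the group $\Ext^1_H(S,T)$ is finite dimensional, whence so is $\Ext^1_C(S,T)$. As $S,T$ were arbitrary, $C$ is locally finite.

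The main obstacle, and the step to write out with care, is the first direction: one must make the bound $\cf(M)\subseteq (C_0')^{\wedge n}$ uniform in $M$ and independent of the (non-unique) choice and ordering of the composition series, and then argue that iterated wedges of a fixed finite dimensional space stay finite dimensional under local finiteness. This requires recording that $\wedge$ is associative and monotone, and reducing the $n$-fold wedge to $n-1$ applications of the two-factor local finiteness condition. In the converse, the only point needing attention is the surjectivity of $\Ext^1_H(S,T)\to\Ext^1_C(S,T)$, which holds precisely because the dimension vector of a middle term of an extension of $S$ by $T$ is pinned to $\underline{d}$, so its coefficient coalgebra cannot escape $H$.
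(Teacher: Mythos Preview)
Your proof is correct. Both directions land cleanly, and the places you flagged as requiring care (uniformity of the wedge bound, surjectivity of $\Ext^1_H\to\Ext^1_C$) are handled.

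The approach differs from the paper's in both directions, though the forward one only mildly. For ``locally finite $\Rightarrow$ $\cf(\underline{d})$ finite dimensional'', the paper inducts on $|\underline{d}|$: it breaks an arbitrary $Y$ with $\dv(Y)=\underline{d}$ into \emph{one} short exact sequence $0\to X\to Y\to Z\to 0$ with strictly smaller dimension vectors and then bounds $\cf(\underline{d})\subseteq\sum_{\underline{e}+\underline{f}=\underline{d}}\cf(\underline{f})\wedge\cf(\underline{e})$, a finite sum of finite dimensional wedges by induction. You instead unroll the entire composition series at once and obtain the single uniform bound $\cf(M)\subseteq (C_0')^{\wedge n}$; this is the same mechanism (the preceding proposition plus monotonicity of $\wedge$) but packaged more directly, and it avoids the sum over decompositions.

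For the converse, the arguments are genuinely different. The paper verifies the quasifiniteness criterion from \cite{I2}: for simples $S,T$ it shows $\Hom^C(T,E(S)/S)$ is finite dimensional by observing that any nonzero $f:T\to E(S)/S$ pulls back to a length-two subcomodule $V\subseteq E(S)\subseteq C$, and since $V\subseteq\cf(V)\subseteq\cf(\underline{d})$, every such $f$ factors through the finite dimensional space $(E(S)\cap\cf(\underline{d}))/S$. You instead invoke the $\Ext$ criterion and argue that the corestriction $\Ext^1_H(S,T)\to\Ext^1_C(S,T)$ is surjective because every middle term is forced into $H{\rm-comod}$ by its dimension vector. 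Your route is slightly more categorical and avoids working inside $C$; the paper's is more concrete and exploits the special feature that $E(S)$ sits inside $C$ as a coideal. Both reach the same conclusion with comparable effort.
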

\begin{proof}
Assume first the latter condition holds, and we prove $C$ is locally finite. For every simple comodule $S=S(i)$, we show that $E(S)/S$ is quasifinite, equivalently, $C/S$ is quasifinite; this will imply that $C$ is locally finite by \cite[Lemma 1.2]{I2}. Let $T$ be another simple comodule and $\underline{d}$ the dimension vector of a length two $C$-comodule with $S,T$ as its subquotients. If $f:T\rightarrow E(S)/S$ is a non-zero morphism, pulling back we find a length two subcomodule $V$ of $E(S)$ with top $T$ and socle $S$, and so $\dv(V)=\underline{d}$. Hence, $\cf(V)\subseteq \cf(\underline{d})$ which is finite dimensional by hypothesis. Now, $E(S)\subseteq C$, and so $V\subseteq \cf(V)\subseteq \cf(\underline{d})$. This shows that ${\rm Im}(f)=V/S\subseteq \cf(V)/S\subseteq (E(S)\cap \cf(\underline{d}))/S$, so $f$ factors to $f:T\rightarrow (E(S)\cap \cf(\underline{d}))/S$. Hence, $\dim(\Hom^C(T,E(S)))= \dim\Hom^C(T,(E(S)\cap \cf(\underline{d}))/S)$ is finite since $(E(S)\cap \cf(\underline{d}))/S$ is finite dimensional ($\dim(\cf(\underline{d}))<\infty$), which ends the proof of this implication. \\
Conversely, assume that $C$ is locally finite. We prove the statement by induction on $|\underline{d}|$, where $|\underline{d}|$ is the sum of the entries in $\underline{d}$ (the length of $|\underline{d}|$). Consider the partial order on $\NN^{(I)}$ given by component-wise comparisons of entries ($\underline{d}\leq \underline{e}$ if $d_i\leq e_i$). First, note that for $|\underline{d}|=1$ the statement is obvious as in that case $\cf(\underline{d})$ is a simple subcoalgebra of $C$. Inductively, if the statement holds for $e\in\NN^{(I)}$ with $|\underline{e}|<n$, consider $|\underline{d}|=n$. Any finite dimensional $C$-comodule $Y$ of representation dimension $\underline{d}$ fits into an exact sequence $0\rightarrow X\rightarrow Y\rightarrow Z\rightarrow 0$ with $\underline{e}=\dv(X)<\dv(Y)=\underline{d}$ and $\underline{f}=\dv(Z)<\dv(Y)=\underline{d}$. By the previous proposition, $\cf(Y)\subseteq \cf(Z)\wedge \cf(X)\subseteq \cf(\underline{f})\wedge \cf(\underline{e})$. This shows that
$$\cf(\underline{d})=\sum\limits_{\dv(Y)=\underline{d}}\cf(Y)\subseteq \sum\limits_{\underline{e},\underline{f}\in \NN^{(I)};\,\,\, \underline{e},\underline{f}<\underline{d}}\cf(\underline{f})\wedge \cf(\underline{e}) $$
(the last sum is actually over all nontrivial decompositions $\underline{e}+\underline{f}=\underline{d}$). But by induction $\cf(\underline{f}),\cf(\underline{e})$ are finite dimensional when $\underline{e},\underline{f}<\underline{d}$, and by the locally finite hypothesis, their wedge is finite dimensional too. Since the sum is over finitely many $\underline{e},\underline{f}$'s, we get that $\cf(\underline{d})$ is finite dimensional, and the proof is finished.
\end{proof}

\begin{remark}\label{r.typo}
If $C$ is a tame coalgebra, then any subcoalgebra $D$ of $C$ is tame. This is analogous to the well known statement that a quotient of a tame algebra is tame, and is essentially known \cite[Theorem 6.7(e)]{si7} (there seems to be a small typo at the end of the proof). One only needs to observe that if $C$ is tame, $\underline{d}$ is a dimension vector over $D$ and $L_1,\dots,L_n$ is an almost parametrizing family for the $C$-comodules $M$ of $\dv(M)=\underline{d}$, then $S_i=L_i/(L_i\cdot D^\perp)=L_i\otimes_{C^*}C^*/D^\perp$ are a parametrizing family for $D$-comodules of dimension vector $\underline{d}$. Indeed, given any such $D$-comodule $N$ of $\dv(N)=\underline{d}$, it can be regarded as a $C$-comodule annihilated by $D^\perp$, and so, except for possibly finitely many isomorphism types of $D$-comodules $N$, we have $N\cong \KK[T]/(T-\lambda)\otimes_{\KK[T]}L_i$, as left $C$-comodules (and right $C^*$-modules) for some $\lambda$ and some $i$; 
we thus get $N\cong N/ND^\perp\cong \KK[T]/(T-\lambda)\otimes_{\KK[T]}L_i\otimes_{C^*}C^*/D^\perp\cong \KK[T]\otimes_{\KK[T]} S_i$.  We note that in the proof of \cite[Theorem 6.7(e)]{si7}, the $\KK[T]$-$C$-bicomodules $L_i$ are stated to be regarded as $\KK[T]$-$D$-bicomodules; however, there is no guarantee that their coefficient coalgebra (over $C$) is contained in the subcoalgebra $D$, and hence, one needs to apply the above mentioned functor, which is a natural step as this functor is the left adjoint to the co-restriction of scalars $D{\rm-Comod}\rightarrow C{\rm-Comod}$. 
\end{remark}

We can now deal with the tame property:

\begin{lemma}\label{l.tamelocal}
Tameness (of coalgebras) is a local property.
\end{lemma}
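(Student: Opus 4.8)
The plan is to prove that tameness is local by showing the two directions separately, with the forward direction (every finite-dimensional subcoalgebra tame $\Rightarrow$ $C$ tame) being the substantive one that relies crucially on Lemma \ref{l.key}. The easy direction is Remark \ref{r.typo}: if $C$ is tame, then every subcoalgebra $D$ of $C$ is tame, and in particular every finite-dimensional subcoalgebra is tame. So the content of the lemma is the converse.

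For the converse, suppose every finite-dimensional subcoalgebra $H$ of $C$ is tame, and I want to show $C$ is tame. Fix a dimension vector $\underline{d}\in\NN^{(I)}$; I must produce an almost parametrizing family for the $C$-comodules of dimension vector $\underline{d}$. The first key step is to invoke Lemma \ref{l.key}: I claim that local tameness forces $C$ to be locally finite. Indeed, if some finite-dimensional subcoalgebra of $C$ were wild, it would also fail to be tame (by the weak dichotomy of \cite{si7}, a tame coalgebra is not wild), contradicting the hypothesis; but more directly, I need only the local finiteness of the Ext quiver. I would argue that if $C$ were \emph{not} locally finite, then $\dim\Ext^1(S,T)=\infty$ for some simple $S,T$, and one can build, inside finite-dimensional subcoalgebras $H$ of $C$, length-two comodules with arbitrarily large spaces of extensions; the standard bimodule constructions then exhibit wildness (or at least non-tameness) of such $H$, contradicting the assumption that every finite-dimensional $H$ is tame. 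Hence $C$ is locally finite.

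Once $C$ is locally finite, Lemma \ref{l.key} gives that $H:=\cf(\underline{d})$ is a \emph{finite-dimensional} subcoalgebra of $C$, and by its very definition every $C$-comodule $M$ with $\dv(M)=\underline{d}$ is in fact an $H$-comodule (its coefficient coalgebra lies in $H$). By hypothesis $H$ is tame, so for the dimension vector $\underline{d}$ (now read over $H$) there is an almost parametrizing family $L_1,\dots,L_r$ of $\KK[T]$-$H$-bicomodules, accounting for all but finitely many isomorphism types of $H$-comodules of dimension vector $\underline{d}$. Since $H{\rm-comod}$ is a full exact subcategory of $C{\rm-comod}$ (co-restriction of scalars along $H\subseteq C$), and since \emph{every} $C$-comodule of dimension vector $\underline{d}$ already lives in $H{\rm-comod}$, this same family $L_1,\dots,L_r$---regarded as $\KK[T]$-$C$-bicomodules via the inclusion $H\hookrightarrow C$---is an almost parametrizing family for the $C$-comodules of dimension vector $\underline{d}$. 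As $\underline{d}$ was arbitrary, $C$ is tame.

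The main obstacle I expect is establishing that local tameness implies local finiteness of $C$; everything after that is a clean application of Lemma \ref{l.key} together with the full-exactness of the corestriction functor. The delicate point is that ``tame'' is defined via almost parametrizing families rather than directly via the absence of a representation embedding, so deducing local finiteness from the tameness of all finite-dimensional subcoalgebras requires care: one must rule out that an infinite-dimensional $\Ext^1(S,T)$ could be compatible with every finite-dimensional $H$ being tame. I would handle this by passing to a suitable finite-dimensional subcoalgebra realizing a large-dimensional space of extensions between $S$ and $T$ and invoking the finite-dimensional theory (e.g.\ a Kronecker-type argument, or directly the known fact that unbounded $\Ext^1$ between two fixed simples produces wild, hence non-tame, finite-dimensional quotients) to reach the needed contradiction.
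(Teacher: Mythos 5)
Your proposal is correct and takes essentially the same route as the paper's proof: the easy direction via Remark \ref{r.typo}, then local tameness forcing local finiteness by realizing $\ge 3$ independent extensions between two simples inside a finite-dimensional subcoalgebra that is then wild (hence not tame, by Drozd), and finally Lemma \ref{l.key} plus corestriction of an almost parametrizing family over $\cf(\underline{d})$. The only difference is that the paper makes your sketched ``Kronecker-type argument'' explicit, by embedding $C$ into the path coalgebra of its Ext quiver and exhibiting the concrete wild subcoalgebras $\KK \Gamma_3$ or $(\KK Q')_1$.
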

\begin{proof}
If $C$ is tame, then so is any (finite dimensional) subcoalgebra by the remark above.\\
Conversely, assume $C$ is locally tame, and we show first that $C$ is locally finite. In fact, $\dim(\Ext^1_C(S(i),S(j)))\leq 2$ for any pair of simple comodules $S(i),S(j)$. Indeed, otherwise, consider the embedding of $C$ in the quiver coalgebra $\KK Q$ of the Ext-quiver $Q$ of $C$; this embedding is such that $C$ contains all vertices and arrows in $Q$. If $\dim(\Ext^1_C(S(i),S(j)))\geq 3$ for some pair of vertices $S(i),S(j)$ of $Q$, then either the Kronecker quiver $\Gamma_3$ is contained in $Q$ if $S(i)\not\cong S(j)$, or the quiver $Q'$ with one vertex and three loops is contained in $Q$ if $S(i)\cong S(j)$. Hence, the quiver coalgebra of either $\Gamma_3$ or $Q'$ is contained in $C$, and this implies that $C$ contains either (a copy of) the quiver coalgebra of $\Gamma_3$, or (a copy of) the subcoalgebra $(\KK Q')_1$ of $\KK Q'$ spanned by the vertex and arrows of $Q'$. The comodule categories of these two coalgebras are equivalent to ${\rm mod-}\Gamma_3$ and ${\rm mod-}\KK[t_1,t_2,t_3]/(t_1,t_2,t_3)^2$, respectively, both being wild. This contradicts the locally tame hypothesis, that every finite dimensional subcoalgebra is tame (this uses the Drozd's tame-wild dichotomy for finite dimensional algebras). \\
To end the proof, we note the key novelty, which again uses the idea of coefficient coalgebra. Let $\underline{d}$ be a dimension vector over $C$; then $\cf(\underline{d})$ is finite dimensional since $C$ is locally finite. Let $L_1,\dots,L_n$ be an almost parametrizing family for $\cf(\underline{d})$-comodules of dimension vector $\underline{d}$; the $L_i$'s are $\KK[T]$-$\cf(\underline{d})$-bicomodules, but they are also $\KK[T]$-$C$-bicomodules by corestriction. Then any $C$-comodule $M$ of $\dv(M)=\underline{d}$ is a $\cf(\underline{d})$-comodule, since $\cf(M)\subseteq \cf(\underline{d})$, and so, except for possibly finitely many such $M$'s, there is an isomorphism $M\cong \KK[T]/(T-\lambda)\otimes_{\KK[T]}L_i$ of $\cf(\underline{d})$-comodules for $\lambda\in \KK$, which is in fact an isomorphism of $C$-comodules by corestriction. The Lemma is proved. 
\end{proof}

Now, the main result follows immediately from Drozd's tame-wild dichotomy for finite dimensional algebras.

\begin{theorem}
Any coalgebra $C$ over an algebraically closed field is either tame or wild, and not both.
\end{theorem}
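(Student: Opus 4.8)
The plan is to assemble the final dichotomy from the three local-property results already established, namely that ``not wild'' is local (Lemma \ref{l.wildlocal}), that ``tame'' is local (Lemma \ref{l.tamelocal}), and that locally these two properties are governed by Drozd's theorem for finite dimensional algebras. The statement has two halves: first, every coalgebra is either tame or wild (at least one of the two holds); second, no coalgebra is simultaneously tame and wild (mutual exclusivity).

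For the first half, I would argue by contraposition on wildness. Suppose $C$ is \emph{not} wild. By Lemma \ref{l.wildlocal}, since ``not wild'' is a local property, every finite dimensional subcoalgebra $H$ of $C$ is not wild. Each such $H$ is (the dual of) a finite dimensional algebra, so Drozd's classical tame-wild dichotomy applies to $H{\rm-comod}$: being not wild, $H$ must be tame. Thus every finite dimensional subcoalgebra of $C$ is tame, i.e. $C$ is locally tame. By Lemma \ref{l.tamelocal}, tameness is a local property, so $C$ itself is tame. This shows ``not wild'' implies ``tame'', which is exactly the assertion that $C$ is tame or wild.

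For the second half, mutual exclusivity, I would again reduce to the finite dimensional case. Suppose for contradiction that $C$ is both tame and wild. Since $C$ is wild, by the proof of Lemma \ref{l.wildlocal} there is a finite dimensional wild subcoalgebra $H\subseteq C$. On the other hand, since $C$ is tame, Remark \ref{r.typo} guarantees that every subcoalgebra of $C$ is tame, so this same $H$ is tame. But $H$ is finite dimensional, so $H{\rm-comod}$ is the module category of a finite dimensional algebra, and Drozd's theorem asserts that no finite dimensional algebra is both tame and wild. This contradiction shows $C$ cannot be both, completing the proof.

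I do not expect a genuine obstacle here, since all the substantive work is done in the preceding lemmas; the theorem is essentially a formal consequence of combining locality of both properties with Drozd's finite dimensional dichotomy as the ``base case''. The only point requiring slight care is the logical bookkeeping: one must verify that Drozd's result supplies \emph{both} the implication ``finite dimensional and not wild $\Rightarrow$ tame'' (used in the first half) and the exclusivity ``finite dimensional $\Rightarrow$ not both tame and wild'' (used in the second half), and that the local reductions point in the correct direction in each case. The forward reduction uses locality of ``not wild'' to pass from $C$ to all $H$, while the reverse uses locality of ``tame'' to pass from all $H$ back to $C$; keeping these two directions straight is the main thing to check.
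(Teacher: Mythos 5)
Your proof is correct and follows essentially the same route as the paper: the paper's proof is the one-line observation that ``tame'' and ``not wild'' are both local properties which coincide on finite dimensional coalgebras by Drozd's theorem, hence coincide globally. Your two halves are just the careful unpacking of that equivalence (locality of ``not wild'' plus Drozd plus locality of ``tame'' for existence, and the reverse directions for exclusivity), so there is no substantive difference.
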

\begin{proof}
Both ``tame" and ``not wild" are local properties; since they are equivalent for every finite dimensional coalgebra (the category of (finite dimensional) comodules over a finite dimensional algebra is equivalent to that of (finite dimensional) modules over the dual algebra), they are equivalent locally, and so also globally.
\end{proof}


\section{Wild Embeddings}\label{s.we}

In this section, we exhibit some representation embeddings between locally finite modules over wild algebras. This is somewhat more relaxed than the usual representation embeddings, which take place between categories of finite dimensional representations. We will denote ${\rm lf-}A$ the category of locally finite right $A$-modules over an algebra $A$. Recall that an $A$-module is locally finite if it is the sum of its finite dimensional submodules. We start with the following proposition, which shows that in this setting one can embed representations over free algebras of uncountably many variables.

\begin{proposition}\label{p.NN}
Let $\KK$ be an infinite field. There is a full and faithful (exact) representation embedding of the category of locally finite modules ${\rm lf-}\KK\langle x_n| n\in\NN\rangle$ over the noncommutative polynomial algebra $A=\KK\langle x_n| n\in\NN\rangle$ in countably many variables into the category ${\rm lf-}\KK\langle y,z,t\rangle$ of locally finite modules over the noncommutative polynomial algebra in three variables.
\end{proposition}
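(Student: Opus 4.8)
The plan is to embed the free algebra on countably many variables $\KK\langle x_n\mid n\in\NN\rangle$ into the free algebra on three variables $\KK\langle y,z,t\rangle$ by using two of the generators, say $z$ and $t$, to build an ``internal indexing mechanism'' that simulates the countably many $x_n$, while the third generator $y$ carries the actual action data. The standard trick for reducing the number of generators of a free algebra is to encode the $n$-th variable as a word in two fixed generators: one lets the module decompose into an infinite direct sum (or, more carefully, a locally finite gluing) of pieces indexed by $n\in\NN$, with $z$ acting as a ``shift'' that moves between adjacent index-pieces and $t$ (or $y$) recording the depth. Concretely, given a locally finite $A$-module $M$ with generators $x_n$ acting as operators $f_n\in\End_\KK(M)$, I would build the image module $F(M)$ on an underlying space like $\bigoplus_{n\in\NN} M_n$ (copies of $M$ indexed by $n$, or a suitable telescoping thereof), arrange $z,t$ to act so that a chosen conjugate/composite of them isolates the $n$-th copy, and then let $y$ implement $f_n$ precisely on that $n$-th piece.

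The key steps, in order, are as follows. First I would fix the concrete formula for the functor $F$: specify the underlying vector space of $F(M)$, define the three operators $Y,Z,T$ on it in terms of the $f_n$ and the indexing, and check that $F(M)$ is again locally finite (this is where infinite-dimensionality of $M$ must be handled with care, since $\bigoplus_n M$ need not be finitely generated, but local finiteness is preserved because each finite-dimensional submodule of $M$ lands in finitely many index-pieces). Second, I would verify functoriality: given an $A$-module map $g:M\to N$ commuting with all $f_n$, the induced map $\bigoplus g:F(M)\to F(N)$ commutes with $Y,Z,T$, giving $F$ on morphisms, and that $F$ is $\KK$-linear and exact (exactness is immediate if $F$ is built from direct sums and the operators are defined component-wise). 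Third, I would prove full faithfulness by recovering $M$ and the $f_n$ from $F(M)$: the indexing operators $Z,T$ let one canonically extract the $n$-th component and read off $Y$ restricted to it as $f_n$, so any $\KK\langle y,z,t\rangle$-morphism $F(M)\to F(N)$ must respect the index decomposition and hence descend to a morphism of $A$-modules; this simultaneously gives injectivity on Hom-sets (faithful) and surjectivity onto $A$-linear maps (full). Finally, preservation of indecomposables and reflection of isomorphism types follows formally once $F$ is shown to be full, faithful, and exact with the extra property that $M$ is reconstructible from $F(M)$.

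The main obstacle I expect is arranging the two ``addressing'' generators $z,t$ so that (a) each $x_n$ is faithfully and independently simulated — i.e. the word in $z,t$ that selects the $n$-th piece does not accidentally interfere with the action encoding $f_m$ for $m\ne n$ — while (b) the resulting module stays locally finite and (c) the functor is genuinely full, meaning no ``extra'' $\KK\langle y,z,t\rangle$-morphisms appear beyond those coming from $A$-morphisms. Fullness is the subtle part: one must show that an arbitrary $\KK\langle y,z,t\rangle$-linear map is forced, by the rigidity of the addressing operators $Z,T$, to preserve the index grading and to be diagonal in $n$, after which commutation with $Y$ pins it down to a single $A$-map repeated across all pieces. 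Over an infinite field this rigidity can be secured by using eigenvalue/generalized-eigenspace separation (choosing $T$ to act with distinct scalars $\lambda_n\in\KK$ on the $n$-th piece, which is exactly where the hypothesis that $\KK$ is infinite enters), so that the index decomposition is intrinsic to the module structure and hence automatically respected by all module homomorphisms; confirming that this separation genuinely forces index-preservation on all of $\End_{\KK\langle y,z,t\rangle}(F(M))$, and not merely on semisimple pieces, is the step that will require the most care.
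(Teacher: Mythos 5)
Your proposal follows essentially the same route as the paper's proof: take $F(M)=M^{(\NN)}$, let $y$ act as $x_n$ on the $n$-th summand, $z$ as the downward shift, and $t$ diagonally by pairwise distinct scalars $\lambda_n\in\KK$ (exactly where infiniteness of $\KK$ enters), with fullness obtained by first using the eigenvalue separation of $t$ to force any morphism to preserve the summands, then commutation with $z$ to force all components to be one common map, and commutation with $y$ to show that map is $A$-linear. The point you flag as delicate is in fact immediate in this construction, since $t$ acts as a literal scalar on each summand, so the summands are precisely its eigenspaces and every module homomorphism automatically preserves them.
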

\begin{proof}
For $M\in {\rm lf-}\KK\langle x_n| n\in\NN\rangle$, define $F(M)=M^{(\NN)}$, and let $F(M)_n$ denote the corresponding component of this direct sum. For $w\in M$, let $w_{(n)}$ denote the element of $F(M)$ which is the image of $w$ via the canonical injection $M\cong F(M)_n\hookrightarrow F(M)$.  Let $y$ act as $\bigoplus\limits_n (w_{(n)}\mapsto w_{(n)}\cdot x_n)$ on $F(M)=M^{(\NN)}$ (multiplication by $x_n$ in each component) and $z$ act on the right as a shift, so $z:F(M)_n\rightarrow F(M)_{n-1}$ is defined by $w_{(n)}\cdot z=w_{(n-1)}$ if $n\geq 1$ and let $z$ act as $0$ on $F(M)_0$. Let also $t$ act diagonally as $\bigoplus\limits_{n}\lambda_n$, where $\lambda_n$ are (pariwise) distinct elements of $\KK$. This defines a $\KK\langle y,z,t\rangle$-module structure on $F(M)$. If $\varphi:M\rightarrow N$ is a morphism of $A$-modules, define $F(\varphi)=\varphi^{(\NN)}$. It is straightforward to note that $F$ is a functor. Moreover, if $w_{(n)}\in F(M)_n$, then it is easy to check that $w_{(n)}\cdot f(y,z,t)\in \bigoplus\limits_{i\leq n} w_{(i)}\cdot A$; this is finite dimensional since $w\cdot A\subseteq M$ is finite dimensional (this is because the action of $z$ shifts down, and the action of $y$ is the action of one of the elements $x_n\in A$, and $t$ acts diagonally). Thus, $F(M)$ is locally finite.\\
It is obvious that $F$ is faithful. Let $\theta: F(M)\rightarrow F(N)$ be a morphism of $\KK\langle y,z,t\rangle$-modules. If $w_{(n)}\in F(M)_{n}$, write $\theta(w_{(n)})=\sum\limits_k\theta(w_{(n)})_k$ with respect to the direct sum components in $F(N)^{(\NN)}$. Since $\theta(w_{(n)} \cdot t)=\theta(w_{(n)}) \cdot t$, we get
$$\lambda_n \sum\limits_{k}\theta(w_{(n)})_k= \sum\limits_{k}\lambda_k\cdot \theta(w_{(n)})_k$$
Identifying component-wise, this means $\theta(w_{(n)})_k=0$ if $k\neq n$ so $\theta(F(M)_n)\subseteq F(N)_n$. Thus, $\theta=\bigoplus\limits_n \theta_n$ with $\theta_n:M\rightarrow N$, and using  
 that $\theta$ commutes with $z$ it will easily follow that $\theta_n=\theta_{n-1}$ for all $n\geq 1$, and so, in fact, $\theta=\varphi^{(\NN)}$. Finally, the fact that $\theta$ commutes with $y$ readily translates to the fact that each for each $n$, $\theta_n=\varphi$ commutes with $x_n$. Hence, $\theta=\varphi^{(\NN)}$ for $\varphi$ a morphism of $A$-modules, and so $F$ is full. \\
It is easy to see that $F$ is also exact, and as $F$ is full and faithful, it follows that it is also a representation embedding. This ends the proof.
\end{proof}

We now use several embeddings to relate all countably generated algebras; these are simply the embeddings used classically to relate finite dimensional representations, but now they will be extended to locally finite modules. The following natural lemma will be useful for this.\\


\begin{lemma}\label{l.fininf}
Let $F:C{-\rm Comod}\longrightarrow {\rm}D{-\rm Comod}$ be an exact functor, which restricts to a functor $F\vert:C{-\rm comod}\longrightarrow {\rm}D{-\rm comod}$. If the restriction $F\vert$ is full and faithful, then so is $F$.
\end{lemma}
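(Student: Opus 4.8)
The plan is to exploit the fundamental structural fact about comodules: every $C$-comodule $M$ is the directed union (filtered colimit) of its finite-dimensional subcomodules, so that $C{\rm-Comod}$ is recovered from $C{\rm-comod}$ as an ind-completion. The strategy is then to reduce both assertions to the hypothesis on $F\vert$, using exactness to govern each finite-dimensional layer and the directed-union structure to pass to arbitrary comodules. Throughout I would use that a faithful exact functor reflects zero objects: if $F(X)=0$ then ${\rm id}_X$ is sent to ${\rm id}_{F(X)}=0$, so ${\rm id}_X=0$ and $X=0$; consequently such an $F$ also reflects monomorphisms, epimorphisms and isomorphisms.

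\emph{Faithfulness.} To show $F$ faithful I would take $\varphi\colon M\to N$ with $F(\varphi)=0$ and argue $\varphi=0$. Since $F$ is exact it preserves images, so $F(\im\varphi)=\im F(\varphi)=0$; then for every finite-dimensional subcomodule $X\subseteq\im\varphi$ the induced mono $F(X)\hookrightarrow F(\im\varphi)=0$ gives $F(X)=0$, and faithfulness of $F\vert$ forces $X=0$. As $\im\varphi$ is the directed union of its finite-dimensional subcomodules, this yields $\im\varphi=0$, hence $\varphi=0$. Note this step will use only exactness of $F$ and faithfulness of $F\vert$, and nothing about how $F$ behaves on colimits.

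\emph{Fullness.} Given $\psi\colon F(M)\to F(N)$, I would write $M=\bigcup_i M_i$ and $N=\bigcup_j N_j$ as directed unions of finite-dimensional subcomodules and build a preimage piece by piece. Restricting $\psi$ to $F(M_i)$ --- which is finite-dimensional because $F\vert$ lands in $D{\rm-comod}$ --- its image is a finite-dimensional subcomodule of $F(N)$; using that $F$ respects the presentation $N=\bigcup_j N_j$, so that $F(N)=\bigcup_j F(N_j)$, this image lies in some $F(N_{j(i)})$, giving $\psi_i\colon F(M_i)\to F(N_{j(i)})$ between finite-dimensional comodules. Fullness of $F\vert$ then produces $\varphi_i\colon M_i\to N_{j(i)}\subseteq N$ with $F(\varphi_i)=\psi\vert_{F(M_i)}$. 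The faithfulness just established guarantees these maps agree on overlaps $M_i\subseteq M_{i'}$, so they glue to a single $\varphi\colon M\to N$, and since $F(M)=\bigcup_i F(M_i)$ the desired equality $F(\varphi)=\psi$ can be verified on the finite-dimensional pieces.

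The main obstacle is precisely the interaction of $F$ with these directed unions. Exactness by itself delivers the monomorphisms $F(M_i)\hookrightarrow F(M)$ and $F(N_j)\hookrightarrow F(N)$ and controls each finite-dimensional layer, but the two places where the argument genuinely leaves the finite-dimensional setting --- locating $\psi(F(M_i))$ inside some $F(N_{j(i)})$, and reconstructing $\psi$ from its restrictions $\psi\vert_{F(M_i)}$ --- both rest on the identifications $F(M)=\bigcup_i F(M_i)$ and $F(N)=\bigcup_j F(N_j)$, that is, on $F$ commuting with the direct limits presenting $M$ and $N$. This is the key point to pin down, and it is what lets the finite-dimensional full-faithfulness of $F\vert$ propagate to all of $C{\rm-Comod}$; it holds for the functors relevant here, namely corestriction of scalars and the amplification $M\mapsto M^{(\NN)}$ of Proposition \ref{p.NN}, which preserve direct limits.
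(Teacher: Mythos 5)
Your proof follows essentially the same route as the paper's: write $M=\bigcup_i M_i$ and $N=\bigcup_j N_j$ as directed unions of finite-dimensional subcomodules, use fullness of $F\vert$ to lift $\psi$ on each piece, check compatibility via faithfulness, and glue. There is, however, one substantive discrepancy, and it is in your favor. The paper's proof asserts that ``by (right) exactness'' one has $F(X)=\lim\limits_{\stackrel{\longrightarrow}{i}} F(X_i)$, and rests the entire fullness argument on that identification; you correctly decline to deduce this from exactness and isolate preservation of the relevant direct limits as an additional input that must be verified. You are right to insist on this: exactness does not imply preservation of filtered colimits, and without that hypothesis the statement is actually false. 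Take $C=D=\KK$, so that $C{\rm-Comod}$ is the category of all vector spaces, and let $F=(-)^{**}$ be the double-dual functor. It is exact, preserves finite dimensionality, and its restriction to finite-dimensional spaces is naturally isomorphic to the identity, hence fully faithful; yet $F$ is not full, since for infinite-dimensional $V$ the image of $\Hom(V,\KK)=V^{*}$ in $\Hom(V^{**},\KK^{**})=V^{***}$ is the canonical copy of $V^{*}$, which is a proper subspace. So the extra hypothesis you flag is genuinely necessary, and, as you note, it does hold for every functor to which the paper applies the lemma (corestriction of scalars, the tensor functors $(-)\otimes_B M$, and the amplification $M\mapsto M^{(\NN)}$ all preserve direct limits, being left adjoints or defined componentwise on direct sums), so the paper's applications are unaffected.

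Two smaller remarks. Your faithfulness argument, via $F(\im\varphi)=\im F(\varphi)$ and testing finite-dimensional subcomodules of the image, is cleaner than the paper's and correctly uses no colimit hypothesis at all: faithfulness, unlike fullness, really does follow from exactness of $F$ plus faithfulness of $F\vert$ (consistent with the fact that the double-dual functor above is faithful but not full). In the gluing step you invoke the faithfulness of $F$ just established, whereas the paper uses faithfulness of $F\vert$ to see that the lifted maps form an inductive system; both are valid, since the discrepancy $\varphi_{i'}\vert_{M_i}-\varphi_i$ factors through a finite-dimensional subcomodule of $N$ in any case.
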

\begin{proof}
To show $F$ is full, take $\alpha:F(X)\rightarrow F(Y)$, write $X=\lim\limits_{\stackrel{\longrightarrow}{i}} X_i$ with $X_i$ the finite dimensional submodules of $X$ and similarly $Y=\lim\limits_{\stackrel{\longrightarrow}{j}}Y_j$. By (right) exactness, we have that $F(X)=\lim\limits_{\stackrel{\longrightarrow}{i}}F(X_i)$, $F(Y)=\lim\limits_{\stackrel{\longrightarrow}{j}}F(Y_j)$. Then for each $X_i$, $\alpha(F(X_i))$ is a finite dimensional submodule of $F(Y)$ and is contained in some $F(Y_j)$. Re-arranging indices, we may pick (fix) for each $i$ some $Y_{j(i)}$ such that $\alpha(F(X_i))\subseteq F(Y_{j(i)})$. Since $F\vert$ is full, this means that  $\alpha_i=\alpha\vert_{F(X_i)}:F(X_i)\rightarrow F(Y_{j(i)})$ is given by $\alpha_i=F(f_i)$ for $f_i:X_i\rightarrow Y_{j(i)}$. Furthermore, the choices of $f_i's$ are unique since $F$ is faithfull on finite dimensional, and then the $f_i$ form a (directed) inductive system, again by the Faithfulness of $F\vert$ (since their images $\alpha_i$ do). Thus, $\alpha=\lim\limits_{\stackrel{\longrightarrow}{i}} \alpha_i=\lim\limits_{\stackrel{\longrightarrow}{i}} F(f_i)=F(\lim\limits_{\stackrel{\longrightarrow}{i}}f_i)$, which proves that $F$ is full. \\
To prove $F$ is faithful, let $f:X\rightarrow Y$ be a morphism of (arbitrary) $C$-comodules with $F(f)=0$. 
Write $f=\lim\limits_{\stackrel{\longrightarrow}{i}} f_i$ where $f_i:X_i\rightarrow Y_i$ are morphisms between finite dimensional submodules of $X$ and $Y$ respectively; moreover, $F(f)=\lim\limits_{\stackrel{\longrightarrow}{i}} F(f_i)=0$. Here, since $F$ is left exact, $F(X_i)$ can be regarded as submodules of $F(X)$, and therefore, $F(f_i)$ can be regarded as the restrictions of $F(f)$ to $F(X_i)$, and so they must be $0$. Since $F$ is faithful on finite dimensional objects, it follows that $f_i=0$ for all $i$, and this ends the proof. 
\end{proof}

We now consider the following embeddings.\\
(1) By \cite[pages 315-318, Theorem 1.7]{ASS3}, if $B$ is a finitely generated algebra, there is a $B$-$\KK\langle u,w\rangle$-bimodule $M$ which is finitely generated free as $B$-module, such that the functor $G=(-)\otimes_B M:{\rm Mod-}B\longrightarrow {\rm Mod-}\KK\langle u,w\rangle$ is full, faithful and exact, and takes finite dimensional modules to finite dimensional modules. Then, since the functor $F$ commutes with colimits, it takes locally finite modules to locally finite modules, and by the previous lemma, it remains full and faithful. \\
(2) Similarly, there is an additive functor $H:{\rm Mod-}\KK\langle u,w\rangle\longrightarrow {\rm Mod-}\Gamma_3(\KK)$ (the category of all representations of the $\Gamma_3$ quiver), which is exact, carries finite dimensional modules to finite dimensional modules, and it is furthermore is full and faithful when restricted to $H:{\rm mod-}\KK\langle u,w\rangle\longrightarrow {\rm mod-}\Gamma_3(\KK)$. Hence, since $H$ also exact, carries locally finite to locally finite modules (in fact, any module over $\Gamma_3(\KK)$ is locally finite, since this is a finite dimensional algebra). Again, the previous lemma shows that $H$ remains full and faithful on the full categories of locally finite modules.\\
(3) Finally, for every wild algebra $W$, there is such an (additive exact) representation embedding $R:{\rm mod-}\Gamma_3(\KK)\longrightarrow {\rm mod-}W$, which is given by a tensor product $R=(-)\otimes_{\Gamma_3(\KK)}N$ (\cite[1.6 Corollary, page 314]{ASS3}; $N$ is again finitely generated projective over $\Gamma_3(\KK)$), and thus this formula defines again a functor $I=(-)\otimes_{\Gamma_3(\KK)}N$ between the full categories of all (locally finite) modules too. Assume $W$ is {\it fully wild} (or {\it strictly wild}), which means that there is such a representation embedding $R$ which is full and faithful. In this case, again $R=I$ remains full and faithful when considered on locally finite modules. Composing these functors $I\circ H\circ G\circ F$, where $F$ is the functor of Proposition \ref{p.NN}, we obtain the following.


\begin{proposition}
If $\KK$ is an infinite field and $W$ is a fully wild algebra, then there is an exact additive fully faithful functor (representation embedding) $U:{\rm lf-}\KK\langle x_n|n\in \NN\rangle\longrightarrow {\rm lf-}W$, which is moreover given by $U=(-)\otimes_{\KK\langle x_n|n\in \NN\rangle} M$ for a bimodule $M$ which is projective to the left. In particular, when $W$ is finite dimensional, this yields an exact fully faithful embedding $U:{\rm lf-}\KK\langle x_n|n\in \NN\rangle\longrightarrow {\rm Mod-}W$.
\end{proposition}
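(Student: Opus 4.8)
The plan is to take the composite $U = I\circ H\circ G\circ F$ of the four functors assembled in (1)--(3) above and to check that every required property is stable under composition. Exactness and additivity are immediate: $F$ is exact by Proposition \ref{p.NN}, while $G$, $H$, $I$ are exact by construction, and a composite of exact additive functors is such. For full faithfulness on locally finite modules, $F$ is already full and faithful on ${\rm lf}$ by Proposition \ref{p.NN}; for $G$, $H$, $I$ I would invoke Lemma \ref{l.fininf}, since each of them is exact, carries finite dimensional modules to finite dimensional modules, and is full and faithful on the finite dimensional subcategory, so the lemma upgrades this to full faithfulness on all locally finite modules. Each factor also sends locally finite modules to locally finite modules: $F$ does so by Proposition \ref{p.NN}, and $G$, $H$, $I$ do because their defining bimodules are finitely generated on the left (so finite dimensional input yields finite dimensional output), together with the fact that tensor functors commute with the directed colimits expressing a locally finite module as the union of its finite dimensional submodules. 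Hence $U\colon {\rm lf-}\KK\langle x_n\mid n\in\NN\rangle\to {\rm lf-}W$ is a well-defined exact, additive, full and faithful functor. Being full and faithful it induces ring isomorphisms $\End(X)\cong\End(U(X))$, so it reflects and preserves indecomposability and isomorphism type, and together with exactness this makes $U$ a representation embedding.

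For the tensor-product description I would first record that each factor is of the form $(-)\otimes(\text{bimodule})$ with the bimodule projective as a left module over its source. The functor $F$ is itself such a tensor functor: writing $P=A^{(\NN)}=\bigoplus_n A e_n$ as a free left $A$-module and equipping it with the right $\KK\langle y,z,t\rangle$-action $e_n y=x_n e_n$, $e_n z=e_{n-1}$ (with $e_0 z=0$), $e_n t=\lambda_n e_n$, one checks these operators are left $A$-linear, so $P$ is an $(A,\KK\langle y,z,t\rangle)$-bimodule, free on the left, and $F=(-)\otimes_A P$. The bimodule for $G$ is finitely generated free as a left module, the one for $H$ is projective as a left $\KK\langle u,w\rangle$-module, and the one for $I$ (namely $N$) is finitely generated projective over $\Gamma_3(\KK)$. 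Composing tensor functors then gives $U=(-)\otimes_A M$ with $M$ the iterated tensor product of these bimodules. Left projectivity of $M$ follows by induction on the stages: if $P'$ is an $(A,B)$-bimodule projective on the left and $P''$ is a $(B,B')$-bimodule that is a direct summand of a free left $B$-module $B^{(J)}$, then $P'\otimes_B P''$ is a direct summand of $P'\otimes_B B^{(J)}=(P')^{(J)}$ as left $A$-modules, hence again projective on the left; applying this at each stage shows $M$ is projective as a left $\KK\langle x_n\mid n\in\NN\rangle$-module.

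The main point requiring care is the passage from finite dimensional full faithfulness to full faithfulness on all locally finite modules, which is exactly the content of Lemma \ref{l.fininf} and rests on each functor commuting with the directed colimits that exhibit a locally finite module as the union of its finite dimensional submodules. One must also confirm that the classical reduction $H$ can genuinely be realized as a tensor functor by a bimodule projective on the left (rather than as a merely abstract additive functor), since this is what is needed to obtain the stated tensor-product form of $U$; this is the least routine of the four assertions. The final clause is then immediate: when $W$ is finite dimensional every $W$-module is a union of finitely generated, hence finite dimensional, submodules, so ${\rm lf-}W={\rm Mod-}W$ and $U$ becomes an exact, full and faithful embedding ${\rm lf-}\KK\langle x_n\mid n\in\NN\rangle\to {\rm Mod-}W$.
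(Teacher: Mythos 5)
Your proposal is correct and follows essentially the same route as the paper: compose $I\circ H\circ G\circ F$ with $F$ from Proposition \ref{p.NN}, use Lemma \ref{l.fininf} to upgrade full faithfulness of each factor from finite dimensional to locally finite modules, note that each factor preserves local finiteness since tensor functors commute with directed colimits, and record the tensor-product form of the composite. Your explicit realization of $F$ as $(-)\otimes_A A^{(\NN)}$ with the prescribed right $\KK\langle y,z,t\rangle$-action, and the direct-summand argument giving left projectivity of the iterated tensor bimodule, in fact supply details that the paper leaves implicit.
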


We use this to show that the category of comodules of {\it any} coalgebra $C$ of at most countable dimension can be representation embedded in the category of finite dimensional comodules of any finite dimensional fully wild algebra  $W$, and consequently, into the category of comodules over any fully wild coalgebra $D$. 

To make the last connection, we will need the following consrtruction. As before, let $A=\KK\langle x_n|n\in \NN\rangle=\KK\langle\NN\rangle$. We first make the following observation. Denote ${\mathbf n}=\{1,2,\dots,n\}$, and let $\KK\langle {\mathbf n}\rangle=\KK\langle x_1,\dots,x_n\rangle$ be the corresponding noncommutative algebra of polynomials, and when $n<m$, we write $\pi_{n,m}:\KK\langle {\mathbf m}\rangle \rightarrow \KK\langle {\mathbf n}\rangle$ the projection corresponding to setting the variables in ${\mathbf m}\setminus {\mathbf n}$ to $0$, and mapping the other ones to their respective counterparts. If $A,B$ are finite dimensional algebras with a projection $\pi:A\rightarrow B$, $I=\ker(\pi)$, let $B$ be generated by $\pi(a_1),\dots,\pi(a_n)$ for $a_1,\dots,a_n\in A$ (for example, $\{a_1,\dots,a_n\}$ can be chosen to be a basis of $A$ modulo $I$), and let $a_{n+1},\dots,a_m$ be a basis of $I$. Consider the surjections: $\KK\langle {\mathbf n}\rangle \rightarrow B=A/I$, $x_i\mapsto {\overline a_i}=\pi(a_i)\in B=A/I$, and $\KK\langle {\mathbf m}\rangle \rightarrow A$, $x_i\mapsto \begin{cases}{a_i}\in A & {\rm if\,}i\leq n\\ a_i\in I\subseteq A & {\rm if\,}n<i\leq m\end{cases}$. The following diagram is commutative; the maps are the ones above; the map $\KK\langle \NN\rangle\rightarrow \KK\langle {\mathbf m}\rangle$ sends all variables $x_i$ for $i>m$ to $0$.
$$
\xymatrix{
\KK\langle\NN\rangle\ar@{->>}[r] & \KK\langle {\mathbf m}\rangle \ar@{->>}[r]^{\pi_{n,m}} \ar@{->>}[d] & \KK\langle {\mathbf n}\rangle \ar@{->>}[d]\\
& A \ar@{->>}[r]_{\pi} & B 
}
$$
We now pass to the finite dual coalgebras. Recall that if $H$ is an algebra, the finite dual coalgera $H^0$ consists of all representative functions $f\in H^*$, equivalently, functions $f$ whose kernel contains a cofinite ideal. $H^0$ can be regarded as the following limit of coalgebras (with natural maps provided by inclusions of cofinite ideals): $H^0=\lim\limits_{\stackrel{\longrightarrow}{I{\rm\,cofinite\,ideal\,in\,}H}}(H/I)^*$. The finite dual coalgebra $H^0$ has the property that the category of finite dimensional, respectively, locally finite right $H$-modules is equivalent to the category of finite dimensional, respectively, arbitrary left $H^0$-comodules. Now let $D\hookrightarrow E$ be any inclusion (embedding) of finite dimensional algebras, and consider the diagram above for the induced projection $A=E^*\rightarrow D^*=B$. By taking finite duals in the above diagram (applying the finite dual functor), and using that $D=(D^*)^0; E=(E^*)^0$, 
we obtain a commutative diagram of coalgebras, with the natural maps:
$$
\xymatrix{
\KK\langle {\mathbf n}\rangle^0 \ar@{^{(}->}[r] & \KK\langle {\mathbf m}\rangle^0  \ar@{^{(}->}[r] & \KK\langle\NN\rangle^0\\
D \ar@{^{(}->}[u] \ar@{^{(}->}[r]& E \ar@{^{(}->}[u] & 
& 
}
$$
Now let $C$ be an {\it arbitrary} (at most) countable dimensional coalgebra. Let $C=\bigcup\limits_{n\in\NN}E_n$ with $E_n\subseteq E_{n+1}$ and $E_n$ finite dimensional subcoalgebras of $C$ (this can be done by choosing a basis first and taking subcoalgebras generated by finite increasing parts of the basis). Apply the diagram above to obtain a commutative diagram with natural maps:
$$
\xymatrix{
\KK\langle {\mathbf n_k}\rangle^0 \ar@{^{(}->}[r] & \KK\langle {\mathbf n_{k+1}}\rangle^0  \ar@{^{(}->}[r] & \KK\langle\NN\rangle^0\\
E_k \ar@{^{(}->}[u] \ar@{^{(}->}[r]& E_{k+1} \ar@{^{(}->}[u]  \ar@{^{(}->}[r] & C \ar@{^{(}.>}[u]
& 
}
$$
Here, the $n_k$ will be a suitably chosen increasing sequence of natural numbers (depending on what is needed to generate $E_k^*$ and $E_{k+1}^*$). Taking colimits, we obtain an embedding $C=\lim\limits_{\stackrel{\longrightarrow}{k}}E_k\hookrightarrow \lim\limits_{\stackrel{\longrightarrow}{k}}\KK\langle {\mathbf n_k}\rangle^0=\lim\limits_{\stackrel{\longrightarrow}{n}}\KK\langle {\mathbf n}\rangle^0\hookrightarrow \KK\langle\NN\rangle^0$. Hence, any countably dimensional coalgebra $C$ can be embedded into the coalgebra $\KK_\NN= \lim\limits_{\stackrel{\longrightarrow}{n}}\KK\langle {\mathbf n}\rangle^0$ and into $\KK\langle\NN\rangle^0$ (in fact, this can also be obtained using the cofree coalgebra \cite{DNR,R0}; see also \cite{AI} for a similar construction involving limits). The (co)restriction functor $C{\rm-Comod}\longrightarrow {\rm lf-}\KK\langle\NN \rangle=\KK\langle\NN \rangle^0{\rm-Comod}$ is exact, full and faithful, and so preserves indecomposables and reflects isomorphisms. Hence, combining all these results, we obtain

\begin{theorem}\label{t.wildE}
If $C,D$ are wild coalgebras of countable dimension and $D$ is fully wild, then there exists a full representation embedding $C{\rm-Comod}\longrightarrow D{\rm-Comod}$.
\end{theorem}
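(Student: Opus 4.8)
The plan is to build the desired functor as a composite of three exact functors that are each full and faithful, and then to use that any full faithful exact functor is automatically a \emph{full} representation embedding: being fully faithful it induces isomorphisms on endomorphism rings, hence reflects isomorphisms (so it respects isomorphism type) and preserves indecomposability.

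First I would reduce the arbitrary $C$ to the universal coalgebra $\KK\langle\NN\rangle^0$. The coalgebra embedding $C\hookrightarrow\KK\langle\NN\rangle^0$ constructed just above induces the corestriction functor
$$\iota:C{\rm-Comod}\longrightarrow \KK\langle\NN\rangle^0{\rm-Comod}={\rm lf-}\KK\langle\NN\rangle,$$
which, as already noted, is exact, full and faithful. (Only the countable dimension of $C$ enters here, not its wildness.) Next, applying the Proposition above with the finite dimensional fully wild algebra $W=\Gamma_3(\KK)$ --- equivalently, the composite $H\circ G\circ F$ appearing in its proof --- I would obtain an exact, full and faithful functor ${\rm lf-}\KK\langle\NN\rangle\longrightarrow {\rm Mod-}\Gamma_3(\KK)$. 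Since $\Gamma_3(\KK)$ is finite dimensional, every module over it is locally finite, so the target is the comodule category $\Gamma_3(\KK)^0{\rm-Comod}$ over the finite dimensional dual coalgebra, and in particular Lemma \ref{l.fininf} will be available for functors out of it.

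The only genuinely new step is to pass from ${\rm Mod-}\Gamma_3(\KK)$ into $D{\rm-Comod}$. Since $D$ is fully wild, there is a full faithful exact representation embedding $\Phi:{\rm mod-}\Gamma_3(\KK)\to D{\rm-comod}$ on finite dimensional comodules, realized (as is standard for such embeddings) by a tensor functor $(-)\otimes_{\Gamma_3(\KK)}L$ with $L$ finitely generated projective as a left $\Gamma_3(\KK)$-module and carrying a compatible left $D$-comodule structure. Such a functor is exact, by projectivity of $L$, and commutes with direct limits, so it extends to an exact functor
$$\tilde\Phi:{\rm Mod-}\Gamma_3(\KK)=\Gamma_3(\KK)^0{\rm-Comod}\longrightarrow D{\rm-Comod}$$
whose restriction to finite dimensional comodules is $\Phi$. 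By Lemma \ref{l.fininf}, $\tilde\Phi$ is full and faithful. Composing, $\tilde\Phi\circ(H\circ G\circ F)\circ\iota:C{\rm-Comod}\to D{\rm-Comod}$ is exact, full and faithful, hence the desired full representation embedding.

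I expect the main obstacle to be exactly this last step: guaranteeing that the finite dimensional fully wild embedding $\Phi$ extends to an exact functor on all of ${\rm Mod-}\Gamma_3(\KK)$ to which Lemma \ref{l.fininf} applies. The clean route is the bicomodule/tensor description, which is exact and colimit preserving for formal reasons; if one does not assume $\Phi$ is of this form, one must instead take the left Kan extension of $\Phi$ along ${\rm mod-}\Gamma_3(\KK)\hookrightarrow {\rm Mod-}\Gamma_3(\KK)$ and verify its exactness using that filtered colimits are exact in the Grothendieck category $D{\rm-Comod}$ --- and that verification is where the real care lies.
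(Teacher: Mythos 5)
Your proof is correct and takes essentially the same route as the paper: corestriction along the embedding $C\hookrightarrow \KK\langle\NN\rangle^0$, then the composite $H\circ G\circ F$ coming from Proposition \ref{p.NN} and the embeddings (1)--(2) (i.e.\ the Proposition applied to the fully wild algebra $\Gamma_3(\KK)$), and finally the extension of the fully wild embedding ${\rm mod-}\Gamma_3(\KK)\to D{\rm-comod}$ to all comodules via its tensor description and Lemma \ref{l.fininf}. The only difference is one of exposition: you make explicit the last extension step, which the paper subsumes under ``combining all these results.''
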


We remark that embeddings of the full module categories are considered in literature (see e.g. \cite{Ri1, Ri2, Sh2}). An algebra $A$ for which there is a (full) representation embedding ${\rm Mod-}W\longrightarrow {\rm Mod-}A$ for $W=\Gamma_3(\KK)$ (equivalently, any wild algebra $W$) is said to be (fully) Wild (see \cite[XIX]{ASS}). A fully wild algebra is fully Wild by a result of \cite{Si13}; thus, the previous theorem extends this in the infinite dimensional case, but gives a little more, that in fact the category of locally finite modules over any countably generated algebra, and hence, comodules and locally finite modules over arbitrary quivers, can be embedded into ${\rm Mod-}A$.
Of course, infinite dimensional algebras/coalgebras of at most countable dimension present particular interest. One may wonder whether the {\it finite dimensional} comodules over any wild countable dimensional coalgebra can be embedded into $W$-mod for a finite dimensional algebra as well, a ``stronger" (or more wild) form of embedding. The above inclusion $C\hookrightarrow \KK_\NN$ of any countable dimensional $C$ into the (cofree coalgebra) $\KK_\NN$ shows that, in fact, every category of finite dimensional comodules can be embedded into the category of finite dimensional $\KK_\NN$-comodules (and also of locally finite $\KK\langle\NN\rangle$-modules). The following shows that the above ``stronger" embedding between categories of finite dimensional objects cannot hold in general, not even if the coalgebra $C$ into which we attempt to embed is infinite dimensional, wild and connected. In fact, the situation when such embeddings don't exist turns out to characterize precisely locally finite coalgebras. 

\begin{proposition}\label{p.KronekerEmbed}
Let $Q=\Gamma_\infty$ be the ``infinite Kronecker" quiver  
$\xymatrix{ \bullet \ar@/^/[r] \ar[r] \ar@/_/[r]_{\stackrel{}{\dots}} & \bullet}$ with two vertices $a,b$ and infinitely many arrows $a\rightarrow b$. Then for any locally finite coalgebra $C$ (in particular, a finite dimensional (co)algebra $C$), the category ${\bf rep}_Q$ of finite dimensional $Q$ representations cannot be representation embedded into $C{\rm-comod}$.
\end{proposition}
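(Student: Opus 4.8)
The plan is to turn the infinitely many arrows of $\Gamma_\infty$ into infinitely many linearly independent extension classes on the $C$-side, contradicting the characterization of local finiteness. Recall that, by the discussion preceding Lemma \ref{l.key} (via \cite[Lemma 1.2]{I2}), $C$ is locally finite if and only if $\dim_\KK\Ext^1_C(S,T)<\infty$ for all simple comodules $S,T$; a routine d\'evissage (induction on composition length, using the long exact sequences of $\Ext$ in each variable, the finiteness of $\Hom$-spaces between finite dimensional comodules, and the base case of simples) then upgrades this to: $\Ext^1_C(U,V)$ is finite dimensional for \emph{all} finite dimensional $C$-comodules $U,V$. On the source side, I would identify $\mathbf{rep}_Q$ with $\KK\Gamma_\infty{\rm-comod}$, which has exactly two simples $S_a,S_b$ (one per vertex). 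For each arrow $\alpha_i$ there is a non-split length-two extension $E_i$ of $S_a$ by $S_b$ using only that arrow; since a finite dimensional comodule has finite dimensional coefficient coalgebra, these are genuine objects of $\mathbf{rep}_Q$, and the classes $[E_i]$ are $\KK$-linearly independent, so $\Ext^1_{\KK\Gamma_\infty}(S_a,S_b)\cong\KK^{(\NN)}$ is infinite dimensional.

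Now suppose, for contradiction, that $F\colon\mathbf{rep}_Q\longrightarrow C{\rm-comod}$ is a representation embedding. Being additive, $\KK$-linear and exact, $F$ induces a $\KK$-linear map $F_*\colon\Ext^1_{\KK\Gamma_\infty}(S_a,S_b)\longrightarrow\Ext^1_C(F(S_a),F(S_b))$, sending the class of $0\to S_b\to E\to S_a\to 0$ to the class of $0\to F(S_b)\to F(E)\to F(S_a)\to 0$. This is well defined and respects both the Baer sum and the scalar action, because $F$ preserves the finite direct sums, kernels and cokernels used to build the group structure on $\Ext^1$. Observe that $F(S_a)$ and $F(S_b)$ lie in $C{\rm-comod}$, hence are finite dimensional; moreover both are nonzero and non-isomorphic, since $F(0)=0$ and $F$ respects isomorphism types.

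The crux is the injectivity of $F_*$. I would argue that its kernel is trivial: a nonzero class in $\Ext^1_{\KK\Gamma_\infty}(S_a,S_b)$ is a non-split extension, whose middle term $E$ is indecomposable (a length-two comodule with distinct simple factors $S_a\not\cong S_b$ is either $S_a\oplus S_b$, i.e.\ split, or indecomposable). Since $F$ preserves indecomposables, $F(E)$ is indecomposable. If $F_*([E])=0$ the sequence $0\to F(S_b)\to F(E)\to F(S_a)\to 0$ would split, forcing $F(E)\cong F(S_a)\oplus F(S_b)$, a direct sum of two nonzero comodules and hence decomposable --- contradicting indecomposability of $F(E)$. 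Thus every nonzero class maps to a nonzero class, so $F_*$ is injective.

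Injectivity of the $\KK$-linear map $F_*$ then makes $\Ext^1_C(F(S_a),F(S_b))$ infinite dimensional (it contains the independent family $\{F_*([E_i])\}_{i\in\NN}$), while the d\'evissage above forces this same space to be finite dimensional because $F(S_a),F(S_b)$ are finite dimensional and $C$ is locally finite. This contradiction proves the proposition. I expect the only real obstacle to be the verification that $F_*$ is well defined, $\KK$-linear and injective; the injectivity is precisely where the representation-embedding hypotheses (preservation of indecomposables and of isomorphism types) are used, and it is what transports the infinitely many arrows of $\Gamma_\infty$ into infinitely many independent extensions over $C$. As a sanity check, the argument does nothing for a finite Kronecker quiver $\Gamma_n$, whose $\Ext^1(S_a,S_b)$ is finite dimensional, which is exactly why local finiteness is the relevant dividing line.
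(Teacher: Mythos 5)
Your proof is correct and follows essentially the same route as the paper's: both induce a $\KK$-linear map $\Ext^1(S_a,S_b)\rightarrow\Ext^1(F(S_a),F(S_b))$ from the exactness and linearity of $F$, prove its injectivity via preservation of indecomposables (a non-split length-two extension has indecomposable middle term, so its image cannot split), and then contradict the characterization of local finiteness by finite dimensionality of Ext spaces between finite dimensional comodules. One small caveat: your identification of ${\bf rep}_Q$ with $\KK\Gamma_\infty{\rm-comod}$ is not exact (the latter consists only of those representations in which all but finitely many arrows act as zero, as the paper itself notes right after this proposition), but this is harmless, since the classes $[E_i]$ you use exist and remain $\KK$-linearly independent already in $\Ext^1_{{\bf rep}_Q}(S_a,S_b)$, which is where the argument actually runs.
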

\begin{proof}
Assume such a representation embedding $F:{\bf rep}_Q\longrightarrow C{\rm-comod}$ exists. Let $S_a,S_b$ be the simple left comodules corresponding to the vertices $a,b$; obviously $\Ext^1(S_a,S_b)$ is an infinite dimensional space. We note that $F$ is linear functor (which follows from the definition of representation embedding). Note that if $\zeta:\,\,\,0\rightarrow S_b\rightarrow M\rightarrow S_a\rightarrow 0$ is a short exact sequence of left comodules, then $0\rightarrow F(S_b)\rightarrow F(M)\rightarrow F(S_a)\rightarrow 0$ is a short exact sequence, and we use a standard argument to see that this induces a map $\Ff:\Ext^1(S_a,S_b)\rightarrow \Ext^1(F(S_a),F(S_b))$. Indeed, it is straightforward to see that this is well defined (up to equivalence of extensions); that $\Ff$ commutes with scalars follows easily by the linearity of $F$. Moreover, $\Ff$ is additive (on Baer sums of extensions); this follows as $F$ commutes with finite limits and colimits (since it is linear, so additive and commutes with finite products and coproducts, and it is left and right exact); and limits and colimits completely determine the vector space strucutre of $\Ext$. Finally, $\Ff$ is injective: this follows easily because $F$ preserves indecomposables: if the above short exact sequence $\zeta$ is not split (so $[\zeta]$ represents a nonzero element of $\Ext(S_a,S_b)$), then $M$ is indecomposable and so $F(M)$ is indecomposable; as a consequence, $F(\zeta)$ cannot be split.  Hence, there is a vector space embedding $\Ff:\Ext^1(S_a,S_b)\hookrightarrow \Ext^1(F(S_a),F(S_b))$, which shows that $\Ext^1(F(S_a),F(S_b))$ is infinite dimensional. Recall that a coalgebra is locally finite if and only if all $\Ext$ spaces between finite dimensional comodules are finite dimensional (for example, again by the results from \cite{I2} on characterizations of locally finite - \cite[Lemma 2.1]{I2} and the proof of it). Hence, $C$ cannot be locally finite, a contradiction.
\end{proof}

On the other hand, if a coalgebra $C$ is not locally finite, then there will be {\it some} infinite dimensional Ext space $\Ext(S,T)$ between simple modules $S,T$, and correspondingly, in the Ext quiver of $C$, there will be infinitely many arrows between two vertices. In case $S\not\cong T$, this will produce an embedding of the above quiver coalgebra $\Gamma_\infty(\KK)$ of the infinite Kronecker $\Gamma_\infty$ into $C$ (which will lie inside the first term of the coradical filtration); in the case when $S=T$ so $\Ext(S,S)$ is infinite dimensional for some simple $S$, we have that the quiver $I_\infty$ with one vertex and infinitely many loops embeds in the Ext quiver of $C$, and hence the coalgebra $(\KK I_\infty)_1$ - the first non-semisimple term of the coradical filtration of the quiver coalgebra $\KK I_\infty$ of $I_\infty$ - embedds into $C$. Thus, in this case, either $\KK \Gamma_\infty{\rm-comod}$ or $(\KK I_\infty)_1{\rm-comod}$ embeds into $C{\rm-comod}$ (a full faithful representation embedding). As noted above, if $D$ is any countable dimensional coalgebra, then $D$ embedds into the coalgebra $\KK_\NN=\lim\limits_{\stackrel{\longrightarrow}{n}}\KK\langle {\mathbf n}\rangle^0$, and so $D{\rm-comod}$ embeds into $\KK_\NN{\rm-comod}$ - a fully faithful representation embedding. Hence, in order to get embeddings between $D{\rm-comod}$ and $C{\rm-comod}$, one has to deal with the particular coalgebras $\KK \Gamma_\infty$, $(\KK I_\infty)_1$ and $\KK_\NN$.

We recall some terminology which is useful in constructing such embeddings by reinterpreting comodules over these coalgebras. Recall that if $Q$ is a quiver, the category ${\bf nrep}_Q$ of finite dimensional nilpotent representations of $Q$ are those finite dimensional representations which are annihilated by a monomial ideal of $\KK[Q]$ of finite codimension; the category ${\bf LocNilRep}_Q$ is the category of locally nilpotent representations of $Q$. By \cite{CKQ} (see also \cite{DIN1}), ${\bf nrep}_Q=\KK Q{\rm-comod}$ and ${\bf LocNilRep}_Q=\KK Q{\rm-Comod}$. Using this interpretation, we see that we have the following:

$\bullet$ $\KK \Gamma_\infty{\rm-comod}={\bf nrep}_{\Gamma_\infty}$ is the full subcategory of the category of finite dimensional representations of $\Gamma_\infty$, for which all but finitely many arrows of $\Gamma_\infty$, denoted by $y_n$, are  0;

$\bullet$ $(\KK I_\infty)_1{\rm-comod}$ is the full subcategory of the category of finite dimensional representations of $I_\infty$, in which the arrows (loops) of $I_\infty$, denoted $z_n$, act such that $z_iz_j=0$, and all but finitely many act as (i.e. are) $0$;

$\bullet$ $\KK_\NN{\rm-comod}$ is the full subcategory of the category of finite dimensional representations of $\KK\langle \NN\rangle=\KK\langle x_n| n\in \NN\rangle$ (and so of $I_\infty$) in which the all but finitely many of the $x_n$'s act as $0$.

We will construct two representation embeddings:

(1) {\it A full representation embedding of $\KK_\NN{\rm-comod}$ into $\KK \Gamma_\infty{\rm-comod}$}. If $M$ is a finite dimensional left $\KK_\NN$-comodule, define $F(M)=M \stackrel{\longrightarrow}{\dots} M$ to be the representation of $\Gamma_\infty$ having $M$ at the two vertices; the map corresponding to $y_{n+1}$ is defined to be the action of $x_n$ (a map from $M$ to $M$), and the map corresponding to $y_0$ is defined as identity $1_M:M\rightarrow M$. That is, the equations  $y_{n+1}=x_n$ and $y_0=1_M$ define the action of $\Gamma_\infty$. The definition on morphisms is similar: $F(f)=(f,f)$. This is easily seen to be faithful exact. Now if $(f,g):F(M)\rightarrow F(N)$ is a morphism of $\Gamma_\infty$ representations, then $g\circ y_0=y_0\circ f$ and since $y_0$ is identity, $f=g$; also, $f$ commutes with the action of $x_n=y_{n+1}$ which implies that $(f,g)=(f,f)=F(f)$ so $F$ is full. Since it is full and faithful, it preserves indecomposables and respects isomorphisms \cite{ASS}. 

(2) {\it A representation embedding of $\KK_\NN{\rm-comod}$ into $(\KK I_\infty)_1{\rm-comod}$}. If $M$ is a finite dimensional left $\KK_\NN$-comodule, define $G(M)=M\oplus M$ to be the representation of $I_\infty$ with $M\oplus M$ for the vertex, and the arrows defined as $z_{n+1}=\left(\begin{array}{cc} 0 & x_n \\ 0 & 0 \end{array}\right)$ and $z_0=\left(\begin{array}{cc} 0 & 1_M \\ 0 & 0 \end{array}\right)$ (these matrices are interpreted to act on $M\oplus M$ regarded as column vectors, in the usual way). Obviously, $z_iz_j=0$ and all but finitely many $z_n$'s act as $0$ since the same is true for the action of the $x_n$'s on $M$. Hence, by the observations above, $G(M)$ is a comodule over $(\KK I_\infty)_1$. The definition of $G$ on morphisms is the obvious one: for $f:M\rightarrow N$, it makes $G(f)$ act diagonally by $f$, that is, $G(f)=Diag(f)=\left(\begin{array}{cc} f & 0 \\ 0 & f \end{array}\right):\begin{array}{c} M \\ \oplus \\ M \end{array}\longrightarrow \begin{array}{c} N \\ \oplus \\ N \end{array}$. It is easy to see that $G(f)$ commutes with the action of all the $z_n$. Also, it is quite straightforward to note that $G$ is exact and faithful. While it is not full, it behaves ``almost" as such, and it is a representation embedding. First, we compute $\Hom(G(M),G(N))$. If $R=\left(\begin{array}{cc} a & b \\ c & d \end{array}\right)\in \Hom_{(\KK I_\infty)_1{\rm-comod}}(G(M),G(N))$, the condition that it commutes with $z_0$ easily implies that $R$ has the form $R=\left(\begin{array}{cc} a & b \\ 0 & a \end{array}\right)$, and using $Rz_n=z_nR$ for $n\geq 1$ we see that $a\in \End_{\KK\langle\NN\rangle}(M)$ commutes with the action of $x_n$ on $M$, and so $a$ is a morphism in  $\KK_\NN{\rm-comod}$. Now, if we have an isomorphism $G(M)\cong G(N)$, it has to be through an isomorphism $R$ of this form; moreover, since $R$ is bijective, it follows (by straight linear algebra) that $a:M\rightarrow N$ is bijective, and so $M\cong N$, showing that $G$ respects isomorphisms. Also, if $M$ is indecomposable and $R\in \End(G(M))$ is idempotent, using the shape of $R$ it follows immediately that $a=a^2$ in $\End(M)$, so $a=0$ or $a=1_M$. In both cases, an easy computation shows that $R^2=R$ only when $b=0$, so either $R=0$ or $R={\rm Id}_{M\oplus M}$, and thus, $G(M)$ is indecomposable. Hence, $G$ preserves indecomposables.




We summarize all the results of this section in the next statement.

\begin{theorem}\label{t.main2}
(A) If $C$ is a fully wild coalgebra, then for any countable dimensional coalgebra $D$, there is a fully faithful representation embedding of the category of $D$-comodules into $C{\rm-Comod}$. In particular, for any fully wild algebra $W$ (finite or infinite dimensional) and any countably generated algebra $A$, the category ${\rm lf-}A$ fully representation embedds in ${\rm lf-}W$ (respectively, into ${\rm Mod-}W$ if $W$ is finite dimensional).\\
(B) Let $C$ be a coalgebra. Then the following are equivalent: 
\begin{itemize}
\item[(i)] $C$ is not locally finite; 
\item[(ii)] There exists a representation embedding of either one of $\KK\Gamma_\infty{\rm-comod}$, $(\KK I_\infty)_1{\rm-comod}$ or $\KK_N{\rm-comod}$ into $C{\rm-comod}$.
\item[(iii)] for every countable dimensional (wild) coalgebra $D$, there is a representation embedding of $D{\rm-comod}$ (finite dimensional $D$-comodules) into $C{\rm-comod}$ (finite dimensional $C$-comodules). 
\end{itemize}
Moreover, if there are two different vertices of the Ext quiver of $C$ with infinitely many arrows between them, then such embeddings can be realized as full exact subcategories.
\end{theorem}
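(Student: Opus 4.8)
The plan is to assemble the statement from the embeddings already constructed in this section, treating part (A) as a direct consequence of the chain of functors culminating in Theorem~\ref{t.wildE}, and organizing part (B) as a cycle of implications $(i)\Rightarrow(iii)\Rightarrow(ii)\Rightarrow(i)$. The whole proof is bookkeeping: composing functors that have already been shown to be exact, faithful, and to preserve indecomposables and isomorphism types, while keeping track of which ones are full, and using throughout that corestriction along a subcoalgebra inclusion $H\subseteq C$ is a full exact embedding $H{\rm-Comod}\hookrightarrow C{\rm-Comod}$.

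For part (A), given a countable dimensional coalgebra $D$, I would first use the inclusion $D\hookrightarrow\KK\langle\NN\rangle^0$ constructed above, so that corestriction gives a full, faithful, exact embedding $D{\rm-Comod}\hookrightarrow{\rm lf-}\KK\langle\NN\rangle$. Since $C$ is fully wild, arguing with coefficient coalgebras as in Lemma~\ref{l.wildlocal} it contains a finite dimensional subcoalgebra $H$ whose dual $H^*$ is a fully wild finite dimensional algebra; by the Proposition preceding Theorem~\ref{t.wildE}, ${\rm lf-}\KK\langle\NN\rangle$ embeds fully and faithfully into ${\rm lf-}H^*={\rm Mod-}H^*=H{\rm-Comod}$, and the latter corestricts fully into $C{\rm-Comod}$. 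Composing the three full exact embeddings yields the claim. For the ``in particular'' clause, a countably generated algebra $A$ is a quotient of $\KK\langle\NN\rangle$, so restriction of scalars embeds ${\rm lf-}A$ fully and faithfully into ${\rm lf-}\KK\langle\NN\rangle$ (a $\KK\langle\NN\rangle$-linear map between $A$-modules is automatically $A$-linear, as $A$ is generated by the images of the $x_n$), after which one applies the same Proposition to land in ${\rm lf-}W$.

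The substance of part (B) is the implication $(i)\Rightarrow(iii)$. Assuming $C$ is not locally finite, the Ext-characterization of local finiteness from \cite{I2} produces simples $S,T$ with $\dim\Ext^1(S,T)=\infty$, i.e. infinitely many arrows $S\to T$ in the Ext quiver. If $S\not\cong T$, the quiver coalgebra $\KK\Gamma_\infty$ sits inside the first term of the coradical filtration of $C$, so corestriction gives a full representation embedding $\KK\Gamma_\infty{\rm-comod}\hookrightarrow C{\rm-comod}$; precomposing with the full embedding $\KK_\NN{\rm-comod}\hookrightarrow\KK\Gamma_\infty{\rm-comod}$ of construction (1) and with $D{\rm-comod}\hookrightarrow\KK_\NN{\rm-comod}$ gives $(iii)$, with every functor full, which is exactly the ``moreover'' conclusion. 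If instead $S\cong T$, then $(\KK I_\infty)_1$ embeds into $C$ and one routes through the representation embedding $\KK_\NN{\rm-comod}\hookrightarrow(\KK I_\infty)_1{\rm-comod}$ of construction (2) to obtain $(iii)$. The implication $(iii)\Rightarrow(ii)$ is immediate by taking $D=\KK\Gamma_\infty$, which is countable dimensional. For $(ii)\Rightarrow(i)$ I would rerun the argument of Proposition~\ref{p.KronekerEmbed}: each of the three source coalgebras carries an infinite dimensional $\Ext^1$ between simple comodules, and a representation embedding induces an injection of this space into $\Ext^1$ between the images of the simples, forcing $C$ to have an infinite dimensional Ext space and hence to fail local finiteness.

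The main obstacle is fullness bookkeeping rather than any new idea: construction (2) is only a representation embedding and not full, so the ``full exact subcategory'' refinement is available precisely in the distinct-vertex case routing through $\Gamma_\infty$, and one must invoke it only there. The other delicate point is verifying $(ii)\Rightarrow(i)$ uniformly across all three source categories, which reduces to the observation that passing to the nilpotent (comodule) subcategory does not shrink the relevant $\Ext^1$, since every length-two extension is automatically nilpotent and hence lies in the comodule category; this guarantees the $\Ext^1$-space feeding Proposition~\ref{p.KronekerEmbed} stays infinite dimensional in each case.
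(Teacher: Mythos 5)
Your architecture is exactly the paper's: the paper proves this theorem in one line by assembling the section's constructions ((A) from Theorem \ref{t.wildE}, (B) from Proposition \ref{p.KronekerEmbed} together with the embeddings (1), (2) and the corestrictions along $\KK\Gamma_\infty\hookrightarrow C$, resp. $(\KK I_\infty)_1\hookrightarrow C$). Your cycle $(i)\Rightarrow(iii)\Rightarrow(ii)\Rightarrow(i)$, the choice $D=\KK\Gamma_\infty$ for $(iii)\Rightarrow(ii)$ (correct, since $\KK_\NN$ need not be of countable dimension when $\KK$ is uncountable), the fullness bookkeeping for the ``moreover'' clause, and the reduction in part (A) of a fully wild coalgebra $C$ to a finite dimensional fully wild subcoalgebra $H=\cf(R(\Gamma_3(\KK)))$ via Lemma \ref{l.wildlocal} (a step the paper leaves implicit inside Theorem \ref{t.wildE}, and which your version makes cleaner, since it also dispenses with any countability or wildness hypothesis not actually present in the statement) are all sound.

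There is, however, one false claim in your justification of $(ii)\Rightarrow(i)$: it is not true that ``every length-two extension is automatically nilpotent.'' A representation of $\Gamma_\infty$ of dimension vector $(1,1)$ on which infinitely many arrows act nonzero is a length-two extension of $S_a$ by $S_b$ in ${\bf rep}_{\Gamma_\infty}$, yet it is not a $\KK\Gamma_\infty$-comodule: nilpotence for $\Gamma_\infty$ means precisely that all but finitely many arrows act as zero. Consequently, passing to the comodule category genuinely shrinks $\Ext^1$ -- it cuts the product $\prod_n\KK$ (one scalar per arrow) down to the direct sum $\bigoplus_n\KK$ of the classes of extensions supported on single arrows. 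What saves the step, and what you should say instead, is that this direct sum is still infinite dimensional: in each of $\KK\Gamma_\infty{\rm-comod}$, $(\KK I_\infty)_1{\rm-comod}$ and $\KK_\NN{\rm-comod}$ the single-arrow (resp. single-loop) extensions give linearly independent classes in the Yoneda $\Ext^1$ computed inside that abelian category, so the injection-on-$\Ext^1$ argument of Proposition \ref{p.KronekerEmbed} applies verbatim to a representation embedding out of any of the three, forcing $C$ to have an infinite dimensional $\Ext^1$ between finite dimensional comodules and hence to fail local finiteness. With that one sentence repaired, your proof is complete and coincides with the paper's.
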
 
\begin{proof}
(A) is just Theorem \ref{t.wildE}; (B) follows from Proposition \ref{p.KronekerEmbed}, and by putting together the previous remarks and representation embeddings.
\end{proof}

\section{Tame and wild coalgebras and localization}\label{s.tw}

Since the ``local" nature of the tame and wild properties is discussed, it is natural to study another type of meaning of the word ``local", namely, the behavior with respect to localization (of Grothendieck categories) of these two properties. In fact, in proving the tame-wild dichotomy for (one sided) semiperfect coalgebras, in \cite{si11} the fc-tame and fc-wild coalgebras are defined, which as we will see, are tightly related to localization. We begin by recalling some facts about localization on the Grothendieck categories $C{\rm-comod}$ of comodules.

\subsection*{Localization in comodules}

While localization of Grothendieck categories was introduced in Gabriel's foundational paper \cite{Ga}, this subject for comodule categories was initiated in \cite{NT1,NT2} and studied by many authors; see for example \cite{CGT}, \cite{GTNT}, \cite{JMN}, \cite{JMN2}. We list here the basic properties of localization (in the sense of Gabriel \cite{Ga}), for the case of locally finite category $C{\rm-comod}$. 

{\bf (1)} The Serre subcategories (i.e. full subcategories closed under subquotients and coproducts) of $C{\rm-comod}$ are of the form $H{\rm-comod}$ for a uniquely determined subcoalgebra $H$ of $C$ (this is a one-to-one correspondence between subcoalgebras and subcategories; \cite{DNR}). \\
{\bf (2)} The category $H{\rm-comod}$ is localizing (i.e. closed under extensions) if and only if $H$ is a coidempotent subcoalgebra of $C$, in the sense that $H\wedge H=H$. Moreover, $H$ is uniquely determined by the family $I_H=\{i\in I|\cf(S(i))\subseteq H\}$ ($H$ is obtained by ``closing" the subcoalgebra $\sum\{\cf(S(i))\}_{i\in I_H}$ under wedge), and there are one-to-one correspondences between subsets of $I$, coidempotent subcoalgebras of $C$, and such localizing subcategories.\\ {\bf (3)} Suppose for simplicity that $C$ is basic (which is no restriction up to Morita-Takeuchi equivalence), and fix a decomposition $C=\bigoplus\limits_{i}E(i)$ as before. For each localizing subcategory $H{\rm-comod}$ there is a unique idempotent $e$ of $C^*$ which is either $0$ or $\varepsilon$ on each $E(i)$ (namely, $e$ is $\varepsilon$ on $E(i)$ for $i\not\in I_H$, and $0$ on the other $E(i)$'s) such that the following diagram is commutative
$$\xymatrix{
C{\rm-Comod} \ar[rr]^T \ar[drr]_{(-)e} & & C{\rm-Comod}/H{\rm-Comod}\ar[d]^{\cong} \\
& & eCe{\rm-Comod} 
}$$
where $T$ is the localization functor, and $e(-)$ is the functor $C{\rm-Comod}\ni N\mapsto Ne\in {eCe}{\rm-Comod}$, and $eCe=\{\sum\limits_c e(c_1)c_2e(c_3)|c\in C\}$ is a coalgebra with (well defined) comultiplication $\Delta_{eCe}(ece)=\sum\limits_{c}ec_1e\otimes_\KK ec_2e$ (see \cite{CGT}, and also \cite{JMN, JMN2}), where in Sweedler notation $\Delta_C(c)=\sum\limits_{c}c_1\otimes c_2$. The counit of this coalgebra $eCe$ is the restriction of  $e\in C^*$ (and also of $\varepsilon$) to $eCe$ (in general, $eCe$ is not a subcoalgebra of $C$; it can be regarded as a quotient of $C$, but the quotient map does not respect counits). This is dual to (but also generalizes) localization in finite dimensional algebras. Hence, we denote simply $T=(-)e$ and will refer to this as the localization functor. We will refer to such an idempotent $e$, that corresponds to a quasifinite injective $E=\bigoplus\limits_{i\in I_H}E(i)$, as a {\it finite idempotent}.\\
{\bf (4)} The functor $T$ is naturally isomorphic to $Ce\square_C - $, with $Ce$ regarded naturally as a left-$eCe$-right-$C$-bicomodule ($\square$ is the cotensor product; see \cite{Tak}). Moreover, $T$ always has a right adjoint $S$, given by $S(-)=eC\square_{eCe} - $ (see \cite{Tak}; note that a left $D$-comodule $N$ over a coalgebra $D$ is equivalently a rational right module over the dual algebra $D^*$). Furthermore, we have $TS\cong{\rm Id}$ the corresponding identity functor, via the counit of the adjunction. \\
{\bf (5)} The closed (Serre) subcategory $H{\rm-Comod}$ is said to be colocalizing in case $T$ has a left adjoint $L$. By \cite[Proposition 1.10]{Tak} this is equivalent to $Ce$ being quasi-finite as a comodule over the coalgebra $eCe$, in which case there is an isomorphism of functors $L\cong {\rm coHom}_{eCe}(Ce,-)$, and we again have $TL={\rm Id}$, the corresponding identity functor, via the unit of the adjunction this time. Also, by \cite[Proposition 3.1]{NT2}, $H{\rm-Comod}$ is colocalizing if and only if $C/H$ is quasifinite as a left $C$-comodule, and in this case $H{\rm-Comod}$ is also localizing, and is closed under products in $C{\rm-comod}$. 

The following is a standard fact related adjoint pairs; parts of it are often used in considerations related to tame and wild finite dimensional algebras. We record it here for easy reference.

\begin{lemma}\label{l.adjoint}
Let $(F,G)=\xymatrix{\Aa\ar@/^/[r]^F & \Bb\ar@/^/[l]^G}$ be a pair of adjoint functors between abelian categories $\Aa,\Bb$ ($F$ being the left adjoint), such that $FG={\rm id}$ via the counit of the adjunction. Then $G$ is full and faithful, and hence, preserves indecomposables and respects isomorphisms. 
\end{lemma}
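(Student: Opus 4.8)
The plan is to prove the general categorical fact that a right adjoint whose counit is invertible is fully faithful, and then to extract the two concrete consequences (preservation of indecomposables and of isomorphism type) as formal corollaries of full faithfulness in an additive setting. First I would fix notation for the adjunction: write $\eta:\mathrm{id}_\Aa\to GF$ for the unit and $\varepsilon:FG\to\mathrm{id}_\Bb$ for the counit, so that the adjunction isomorphism $\Phi:\Hom_\Bb(F(A),B)\xrightarrow{\sim}\Hom_\Aa(A,G(B))$ is given on a morphism $g:F(A)\to B$ by the transpose $\Phi(g)=G(g)\circ\eta_A$. The hypothesis ``$FG=\mathrm{id}$ via the counit'' is then exactly the statement that each $\varepsilon_B$ is an isomorphism (indeed an identity).

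To show $G$ induces a bijection on hom-sets, I would fix $B,B'\in\Bb$ and consider the composite
$$\Hom_\Bb(B,B')\xrightarrow{\;\varepsilon_B^{\,*}\;}\Hom_\Bb(FG(B),B')\xrightarrow{\;\Phi\;}\Hom_\Aa(G(B),G(B')),$$
where $\varepsilon_B^{\,*}(f)=f\circ\varepsilon_B$. The first map is a bijection because $\varepsilon_B$ is invertible, and the second is a bijection by the adjunction, so the composite is a bijection. The crux is to check that this composite is literally the action $f\mapsto G(f)$ of the functor $G$ on morphisms: expanding, $\Phi(f\circ\varepsilon_B)=G(f)\circ G(\varepsilon_B)\circ\eta_{G(B)}$, at which point the triangle identity $G(\varepsilon_B)\circ\eta_{G(B)}=\mathrm{id}_{G(B)}$ collapses the expression to $G(f)$. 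Hence $G$ is full and faithful. The one step I expect to need genuine care is precisely this identification of the composite bijection with $G$ through the triangle identity; the rest is formal bookkeeping about adjunctions.

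Finally I would deduce the last two assertions from full faithfulness alone. Since $G$ is fully faithful, it restricts to a ring isomorphism $\End_\Bb(B)\cong\End_\Aa(G(B))$, so idempotents correspond bijectively; as an object of an additive category is indecomposable exactly when its endomorphism ring has no nontrivial idempotents, $G$ preserves indecomposables. For isomorphism type, if $\alpha:G(B)\to G(B')$ is invertible, fullness lets me write $\alpha=G(f)$ and $\alpha^{-1}=G(g)$, and then faithfulness applied to the equalities $G(gf)=\mathrm{id}_{G(B)}=G(\mathrm{id}_B)$ and $G(fg)=\mathrm{id}_{G(B')}=G(\mathrm{id}_{B'})$ forces $gf=\mathrm{id}_B$ and $fg=\mathrm{id}_{B'}$, so $f$ is an isomorphism and $B\cong B'$. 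Thus $G$ respects isomorphisms, completing the proof.
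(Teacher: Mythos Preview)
Your proof is correct and follows essentially the same approach as the paper's: both establish that the map $\Hom_\Bb(B,B')\to\Hom_\Aa(G(B),G(B'))$, $f\mapsto G(f)$, is a bijection by factoring it through the adjunction isomorphism and the identification $FG(B)\cong B$, the only difference being that you spell out the triangle identity explicitly where the paper says ``it is easy to see.'' Your added details on why full faithfulness yields preservation of indecomposables and reflection of isomorphisms are standard and correct, and simply flesh out the paper's ``and hence.''
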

\begin{proof}
The equation $FG={\rm Id}$ easily implies $G$ faithful; $G$ is also full since $\Hom(G(A),G(A'))=\Hom(FG(A),A')=\Hom(A,A')$, and these identifications are made through the natural maps, and it is easy to see that identification between the first and last $\Hom$ is via $g\rightarrow G(g)$. 
\end{proof}

We need one more fact related to wild finite dimensional algebras. The following proposition follows essentially from the results of Drozd \cite{Dr} and a method presented in his proof of the tame-wild dichotomy. It says that if a there is a left exact additive functor $F:{\rm mod-}W\longrightarrow {\rm mod-}A$ which preserves indecomposables and reflects isomorphism types, where $W$ is a finite dimensional wild algebra, then one can find an (additive) functor $G:{\rm mod-}W'\longrightarrow {\rm mod-}A$, where $W'$ is a (possibly different) finite dimensional wild algebra, and such that $G$ preserves indecomposables, reflects isomorphism types and is furthermore exact. We refer the reader also to \cite[page 479]{CB}, where this replacement procedure is explained in detail; this is done by precomposing $F$ with a series of suitably chosen functors (this is also used implicitly in \cite{si11}).  One only needs to note that such a left exact additive functor $F$ is given by $F=-\otimes_W M $ for a bimodule $M$ which is finitely generated as a left $W$-module \cite{McL}. Hence, this proves the following Lemma, which is likely well known to specialists as is the above method of Drozd, but we could not find a direct reference, and we record it for further reference.

\begin{lemma}\label{l.weakwild}
Let $A$ be a finite dimensional algebra. Then the following are equivalent.\\
(i) $A$ is wild.\\
(ii) There is a left exact, additive functor $F:{\rm mod-}W\rightarrow {\rm mod-}A$ which preserves indecomposables and reflects isomorphisms, where $W$ is a wild algebra.\\
(iii) There is a right exact, additive functor $G:{\rm mod-}W\rightarrow {\rm mod-}A$ which preserves indecomposables and reflects isomorphisms, where $W$ is a wild algebra.
\end{lemma}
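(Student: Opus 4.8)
The lemma asserts the equivalence of three conditions for a finite-dimensional algebra $A$: wildness (i), the existence of a \emph{left} exact representation embedding from some wild algebra (ii), and the existence of a \emph{right} exact representation embedding from some wild algebra (iii). The point is to be able to freely choose whether the functor detecting wildness is left or right exact, since different constructions naturally produce one or the other.

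**The plan.** The implications $(i)\Rightarrow(ii)$ and $(i)\Rightarrow(iii)$ are immediate: by the very definition of wildness there is a representation embedding $\Gamma_3(\KK){\rm-mod}\hookrightarrow A{\rm-mod}$, and the standard realization of such an embedding is via a tensor product $(-)\otimes_{\Gamma_3(\KK)}N$ with $N$ finitely generated projective (hence flat and projective) as a $\Gamma_3(\KK)$-module, which is simultaneously left and right exact. So I would dispatch these two directions in one sentence, taking $W=\Gamma_3(\KK)$ and citing the bimodule form of the embedding from \cite{ASS3,ASS}. The substance of the lemma is therefore the two converse directions $(ii)\Rightarrow(i)$ and $(iii)\Rightarrow(i)$, which I would reduce to a single statement using Drozd's replacement method.

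**The core step.** For $(ii)\Rightarrow(i)$, suppose $F:W{\rm-mod}\to A{\rm-mod}$ is left exact, additive, preserves indecomposables and reflects isomorphisms, with $W$ wild. The first move is to invoke the representation theorem for additive left exact functors between module categories \cite{McL}: such an $F$ is naturally isomorphic to $(-)\otimes_W M$ for a $W$-$A$-bimodule $M$ that is finitely generated as a left $W$-module. Now I would apply the replacement procedure of Drozd \cite{Dr} (as presented explicitly in \cite[page 479]{CB}): one precomposes $F$ with a finite sequence of carefully chosen functors so that the composite becomes \emph{exact} while remaining a representation embedding, at the cost of possibly replacing the source $W$ by another wild algebra $W'$. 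The output is an exact representation embedding $W'{\rm-mod}\to A{\rm-mod}$. Composing with a (full, exact) representation embedding $\Gamma_3(\KK){\rm-mod}\hookrightarrow W'{\rm-mod}$ available because $W'$ is wild, we obtain an exact representation embedding from $\Gamma_3(\KK)$, i.e.\ $A$ is wild by Definition \ref{d.1}. The direction $(iii)\Rightarrow(i)$ is entirely parallel, using the dual representation theorem that a right exact additive functor is of the form $(-)\otimes_W M$ for $M$ finitely generated (projective) on the appropriate side, and then the same replacement machinery; alternatively one can observe that $(i)$ already produces an exact (hence right exact) embedding, so once $(ii)\Leftrightarrow(i)$ is established, $(iii)$ follows by noting an exact functor is in particular right exact and re-running the replacement argument starting from right exactness.

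**The main obstacle.** The genuine difficulty is not the functor-representability (which is formal homological algebra) but the Drozd replacement step itself: one must verify that the precomposed functors preserve the crucial properties—preservation of indecomposables and reflection of isomorphism classes—while upgrading exactness. This is exactly the delicate part of Drozd's original argument and is why I would not reprove it but instead cite \cite{Dr,CB} and treat it as a black box, as the paper itself signals. The only care needed on our side is to check that the hypotheses required to feed into that machine are met, namely that $M$ is finitely generated on the correct side (so that the intermediate functors are well-defined on finite-dimensional modules and land back in $A{\rm-mod}$), and this is precisely what \cite{McL} guarantees from the one-sided exactness assumption.
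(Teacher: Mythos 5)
Your proposal is correct to the same degree as the paper and shares its core strategy --- representability of one-sidedly exact additive functors as bimodule tensor functors, fed into Drozd's replacement procedure \cite{Dr}, \cite[page 479]{CB} treated as a black box --- but it handles (iii) by a genuinely different route. The paper never runs the replacement machinery in the right exact case: it proves (i)$\Leftrightarrow$(ii) essentially as you do, and then disposes of (iii) purely formally, by observing that $F:{\rm mod-}W\rightarrow{\rm mod-}A$ is left exact with the properties required in (ii) if and only if $DFD:{\rm mod-}W^{\op}\rightarrow{\rm mod-}A^{\op}$ is right exact with the same properties ($D$ the standard duality), and then using the left-right symmetry of wildness ($A$ is wild iff $A^{\op}$ is). You instead prove (iii)$\Rightarrow$(i) directly, via the Eilenberg--Watts observation that a right exact additive functor $G$ is naturally isomorphic to $(-)\otimes_W G(W)$ with $G(W)$ finite dimensional, hence finitely generated over $W$, and then re-run the replacement argument. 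Both work; the paper's duality trick is shorter and invokes the black box for only one variance, while your version rests (iii) on the most standard, genuinely correct form of the representability theorem.

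That last point matters more than it may appear, and it is the one thing you should repair. The claim you use for (ii)$\Rightarrow$(i) --- that a left exact additive functor ${\rm mod-}W\rightarrow{\rm mod-}A$ is of the form $(-)\otimes_W M$ with $M$ finitely generated over $W$ --- is delicate (the paper's preamble asserts the same thing, citing \cite{McL}): a tensor functor is automatically right exact, and left exactness of $(-)\otimes_W M$ forces $M$ to be flat, hence projective, so such an $F$ would already be exact; in general a left exact additive functor is a Hom functor $\Hom_W(N,-)$, and this is exactly the kind of functor to which the Lemma is applied in the paper (the right adjoint $S$ of localization). The clean way out is precisely the paper's duality step: dualize the left exact $F$ to the right exact $DFD$, apply Eilenberg--Watts and the replacement procedure there, and conclude that $A^{\op}$, hence $A$, is wild. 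Since your proposal already contains a sound proof of (iii)$\Rightarrow$(i), you can make the whole argument airtight by deducing (ii)$\Rightarrow$(i) from it via $D(-)D$ and left-right symmetry of wildness, rather than leaning on the left exact representability claim directly.
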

\begin{proof}
The equivalence between (i) and (ii) is explained above, and follows by the arguments (of Drozd \cite{Dr}) presented in \cite[page 479]{CB}. The equivalence of (ii) and (iii) follows by noting that if $D$ is the usual duality functor between left and right finite dimensional $W$-modules, then the functor $F:{\rm mod-}W\rightarrow {\rm mod-}A$ is left exact and has the properties required in (ii) if and only if the functor $DFD:{\rm mod-}W^{\rm op}\rightarrow {\rm mod-}A^{\rm op}$ is right exact and has these properties; finally, this ends the proof since the property of being wild is left-right symmetric, i.e. $A$ is wild if and only of $A^{\rm op}$ is wild.
\end{proof}

One may call such functors as in the remarks above, which are only half-exact but have all the properties of representation embeddings, {\it half-exact representation embeddings}.

To consider tame/wild vis-a-vis localization, we first deal with the finite dimensional case. The following is likely well-known.

\begin{proposition}\label{p.2}
Let $A$ be a finite dimensional algebra, and $e$ an idempotent of $A$. If $A$ is tame, then so is $eAe$. In particular, if $C$ is a finite dimensional tame coalgebra, then $eCe$ is tame for any idempotent $e\in A=C^*$.
\end{proposition}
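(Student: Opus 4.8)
The plan is to reduce the statement to a functor between module categories and then invoke the characterization of wildness via half-exact representation embeddings (Lemma \ref{l.weakwild}). The key object is the corner algebra $eAe$, together with the classical Schur functor $\Hom_A(Ae,-)\cong (-)e:\mathrm{mod}\text{-}A\to\mathrm{mod}\text{-}eAe$ and its (left) adjoint, the ``induction'' functor $(-)\otimes_{eAe}eA:\mathrm{mod}\text{-}eAe\to\mathrm{mod}\text{-}A$. First I would argue the contrapositive: assume $eAe$ is wild and produce a representation embedding witnessing that $A$ is wild. By definition of wildness there is a representation embedding $F:\mathrm{mod}\text{-}W\to\mathrm{mod}\text{-}eAe$ for a wild algebra $W$ (one may take $W=\Gamma_3(\KK)$), and the strategy is to post-compose $F$ with the induction functor to land in $\mathrm{mod}\text{-}A$.

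The main steps, in order, are as follows. First I would record the standard adjunction $\bigl((-)\otimes_{eAe}eA,\ (-)e\bigr)$ between $\mathrm{mod}\text{-}eAe$ and $\mathrm{mod}\text{-}A$, and note that since $eA\cdot Ae=eAe$, composing induction with the Schur functor returns the identity on $\mathrm{mod}\text{-}eAe$ via the counit; this is exactly the hypothesis of Lemma \ref{l.adjoint}, from which the induction functor is full and faithful, hence preserves indecomposables and reflects isomorphisms. Second, I would form the composite $G=\bigl((-)\otimes_{eAe}eA\bigr)\circ F:\mathrm{mod}\text{-}W\to\mathrm{mod}\text{-}A$. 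Since $F$ is a representation embedding (in particular fully faithful on its image and reflecting isomorphisms) and induction is fully faithful, the composite $G$ preserves indecomposables and reflects isomorphism types. Third, I would check the exactness bookkeeping: $F$ is exact, while induction $(-)\otimes_{eAe}eA$ is only right exact in general, so $G$ is right exact and additive but need not be exact. This is precisely the situation covered by Lemma \ref{l.weakwild}(iii): a right exact additive functor from $\mathrm{mod}\text{-}W$ (with $W$ wild) into $\mathrm{mod}\text{-}A$ that preserves indecomposables and reflects isomorphisms forces $A$ to be wild. Hence $eAe$ wild implies $A$ wild, which by Drozd's dichotomy (available in the finite dimensional case) gives that $A$ tame implies $eAe$ tame. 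The final clause about $eCe$ then follows immediately by dualizing, since for finite dimensional $C$ the category $C\text{-}\mathrm{comod}$ is equivalent to $\mathrm{mod}\text{-}C^*$ and the corner coalgebra $eCe$ corresponds to the corner algebra $e C^* e$.

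The step I expect to be the main obstacle is precisely the exactness mismatch: the natural adjoint of the Schur functor is induction, which is only right exact, so one cannot directly conclude that $G$ is a representation embedding in the strict (exact) sense of Definition \ref{d.1}. The clean way around this is to have isolated, in advance, the ``half-exact'' criterion for wildness, which is exactly what Lemma \ref{l.weakwild} provides; this lets the merely right-exact composite $G$ still detect wildness of $A$. One should also take a small amount of care that the Schur functor / induction pair is set up so that $FG=\mathrm{id}$ holds through the counit (and not merely up to an unnamed isomorphism), so that Lemma \ref{l.adjoint} applies verbatim to yield full faithfulness of induction. With those two lemmas in hand, the argument is short and essentially formal; all the genuine representation-theoretic content has been offloaded to Lemma \ref{l.weakwild} and to Drozd's theorem.
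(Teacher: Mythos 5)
Your argument is correct in substance and is essentially the mirror image of the paper's proof: same contrapositive, same three ingredients (corner adjunction, full faithfulness extracted from an identity composite, half-exact wildness criterion of Lemma \ref{l.weakwild}, then Drozd), but run through the \emph{left} adjoint of the Schur functor rather than the right one. The paper works with the right adjoint $S$ of $T=(-)e$ (coinduction, $\Hom_{eAe}(Ae,-)$): since $TS=\mathrm{Id}$ via the counit, Lemma \ref{l.adjoint} applies word-for-word to give that $S$ is fully faithful, and as a right adjoint $S$ is left exact, so Lemma \ref{l.weakwild}(ii) finishes. You use induction $(-)\otimes_{eAe}eA$, which is right exact, and correctly route the exactness defect through Lemma \ref{l.weakwild}(iii); that works equally well. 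One correction to your bookkeeping, though: for your choice of adjoint, the isomorphism $\bigl((-)\otimes_{eAe}eA\bigr)e\cong \mathrm{Id}$ on ${\rm mod\text{-}}eAe$ is the \emph{unit} of the adjunction, not the counit, and Lemma \ref{l.adjoint} as stated (counit an isomorphism implies the \emph{right} adjoint is fully faithful) does not apply verbatim, contrary to what you assert at the end; you need its dual, namely that a left adjoint is fully faithful exactly when the unit is an isomorphism. That dual statement is standard and has the same one-line proof, so this is a presentational slip rather than a gap, but it is precisely the point where the paper's choice of the right adjoint $S$ buys a literal citation of Lemma \ref{l.adjoint}, while your choice trades that for invoking part (iii) instead of (ii) of Lemma \ref{l.weakwild}. (Also note that a representation embedding in this paper need not be full, so the composite is handled by ``preserves indecomposables and reflects isomorphisms'' for each factor, which your argument does in fact only need.)
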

\begin{proof}
We prove that if $eAe$ is wild, then $A$ is wild. Consider the localization $T:{\rm mod-}A\rightarrow {\rm mod-}eAe$ with right adjoint $T$ (and left adjoint $L$). Since $A$ is finite dimensional, these left and right adjoints take finite dimenisonal modules to finite dimensional modules, as it is easily observed. Then, apply Lemma \ref{l.adjoint} to observe that since $TS={\rm Id}$, $S$ is a left-exact representation embedding, and since $eAe$ is wild, 
by Lemma \ref{l.weakwild}, $A$ is wild. The part about coalgebras is equivalent to the one about algebras, since $(eCe)^*=eC^*e$ for any idempotent $e\in C^*$, and the coalgebra localization coincides with the algebra localization.
\end{proof}

We note that a direct proof (that does not appeal to the tame/wild dichotomy) of the above proposition is also possible, using the definition of tameness and the localization functor $T$. This can be done by using almost parametrizing families $L_1,\dots,L_n$ for $A$-modules to obtain almost parametrizing families $T(L_1),\dots,T(L_n)$ for $eAe$-modules; one needs to be careful to deal with dimension vectors. One way to do this is to consider parametrizations of dimension vectors which are less (in product ordering) then some fixed $\underline{d}$. Hence, one can show that all the $eAe$-modules of dimension vector $\leq\underline{d}$ can be obtained from $A$-modules of dimension vector less or equal to some $\underline{d'}$. The technical details are more tedious though, and are left to the interested reader.

We also need the following coalgebra observation, which is along the lines of the above mentioned approach of tame implies locally tame. 

\begin{proposition}
Let $C$ be an arbitrary coalgebra, $e$ an idempotent of $C^*$. \\
(i) If $N$ is a finite dimensional left $eCe$-comodule, then there exists a finite dimensional left $C$-comodule $M$ for which $Me=N$.\\
(ii) If $D$ is a finite dimensional subcoalgebra of $eCe$, then there exists a finite dimensional subcoalgebra $H$ of $C$ such that $D=eHe=fHf$, where $f=e\vert_H\in H^*$.
\end{proposition}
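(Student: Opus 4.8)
The plan is to use the localization machinery (1)--(5) recalled above: part (i) is essentially formal from the right adjoint $S$, while part (ii) is the substantive statement, which I would reduce to a computation with a corner of a finite dimensional algebra. For (i), apply $S(-)=eC\,\square_{eCe}\,-$ to the finite dimensional comodule $N$; since $TS\cong{\rm Id}$ by (4), the (possibly infinite dimensional) $C$-comodule $S(N)$ satisfies $S(N)e\cong N$. Now write $S(N)$ as the directed union of its finite dimensional subcomodules $M_\alpha$. Because $T=(-)e$ is exact and a left adjoint, it preserves this union and sends monomorphisms to monomorphisms, so $N=\sum_\alpha M_\alpha e$ is an increasing union of subcomodules of $N$. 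Since $N$ is finite dimensional this union is attained at some stage, so $M_\alpha e=N$ and $M=M_\alpha$ is the required finite dimensional $C$-comodule.

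For (ii) I would first reduce to the finite dimensional case. The map $\pi\colon C\to eCe$, $\pi(c)=\sum e(c_1)c_2e(c_3)$, is surjective and intertwines the comultiplications (it is the defining structure map of $eCe$), so $\pi(V)=eVe$ for every subspace $V$, and $\pi$ carries subcoalgebras of $C$ to subcoalgebras of $eCe$. Choosing a basis $d_1,\dots,d_k$ of $D$, lifting each $d_i$ to some $c_i\in C$ with $\pi(c_i)=d_i$, and letting $H_1$ be the subcoalgebra generated by $c_1,\dots,c_k$ (finite dimensional, by the fundamental theorem of coalgebras), I obtain a finite dimensional subcoalgebra $D_1:=eH_1e=\pi(H_1)$ of $eCe$ with $D\subseteq D_1$. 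It then suffices to produce a subcoalgebra $H\subseteq H_1$ with $eHe=D$, since then $eHe=\pi(H)=fHf$ automatically holds with $f=e|_H$ (on $H$ one has $e=f$, because $\Delta(H)\subseteq H\otimes H$). The main obstacle is exactly this trimming step: it is trivial to find subcoalgebras whose corner contains $D$, but one must land on $D$ on the nose.

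To carry it out, dualize. Set $R=H_1^*$ and $f=e|_{H_1}\in R$, an idempotent; then $fRf=(eH_1e)^*=D_1^*$ is the corner ring, and finite dimensional coalgebra--algebra duality gives inclusion-reversing bijections between subcoalgebras of $H_1$ and two-sided ideals of $R$ via $H\mapsto H^\perp$, and likewise between subcoalgebras of $D_1$ and two-sided ideals of $fRf$. A one-line Sweedler computation shows $\phi(\pi(h))=\phi(h)$ for all $\phi\in fRf$ and $h\in H_1$, whence $(fHf)^{\perp}=H^{\perp}\cap fRf$ inside $fRf$; thus the corner $eHe=fHf$ equals $D$ exactly when $H^{\perp}\cap fRf=\tilde I$, where $\tilde I\subseteq fRf$ is the two-sided ideal corresponding to $D$. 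I would then take $J=R\,\tilde I\,R$, the ideal of $R$ generated by $\tilde I$: since $\tilde I=f\tilde If$ is an ideal of $fRf$, any $x\in J\cap fRf$ satisfies $x=fxf\in f(R\tilde IR)f=(fRf)\tilde I(fRf)=\tilde I$, so $J\cap fRf=\tilde I$. The subcoalgebra $H=J^{\perp}\subseteq H_1$ then has $eHe=fHf=D$, completing the proof. The only delicate part is the bookkeeping of the duality and the corner identification; the key fact making the trimming possible is the elementary ring-theoretic identity $(R\tilde IR)\cap fRf=\tilde I$.
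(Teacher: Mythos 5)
Your proposal is correct, and on the substantive half, part (ii), it takes a genuinely different route from the paper's --- one that is more careful at exactly the right spot. For (i) the two arguments essentially coincide: both apply the right adjoint $S$, use $TS\cong{\rm Id}$, and shrink $S(N)$ to a finite dimensional subcomodule (the paper takes the subcomodule generated by preimages of a basis of $N$ and squeezes ${\rm span}(y_i)\subseteq Me\subseteq S(N)e=N$; you let a directed union stabilize --- same idea). For (ii), however, the paper lifts a basis $x_i=ey_ie$ of $D$ arbitrarily and asserts that the subcoalgebra $H$ generated by the $y_i$ ``obviously'' satisfies $eHe=D$; but generation only gives $D\subseteq eHe$, and the reverse inclusion can genuinely fail for bad lifts --- this is precisely the ``trimming'' obstacle you isolate. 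Concretely, let $C$ be the path coalgebra of the quiver with vertices $0,1,2,3$ and arrows $\delta:0\to 1$, $\gamma,\mu:1\to 2$, $\nu:2\to 3$, and let $e\in C^*$ be the idempotent equal to $1$ on the vertices $1,2$ and $0$ on all other paths; then $eCe={\rm span}\{e_1,e_2,\gamma,\mu\}$ and $D={\rm span}\{e_1,e_2,\gamma\}$ is a subcoalgebra of $eCe$. The lift $y=\gamma+\nu\mu\delta$ of $\gamma$ satisfies $eye=\gamma$, yet the subcoalgebra it generates contains $\mu$ (pair the two outer legs of $\Delta^2(y)$ against $\delta^*$ and $\nu^*$), so for this choice $eHe=eCe\supsetneq D$. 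Thus the actual content of (ii) is that some good choice of $H$ exists, and your proof supplies exactly that: first a finite dimensional $H_1$ with $D\subseteq eH_1e$ (the easy containment), then the dual trimming $H=(R\tilde I R)^\perp$, justified by the identities $\phi(\pi(h))=\phi(h)$ for $\phi\in fRf$, $(fHf)^\perp=H^\perp\cap fRf$, and the corner computation $f(R\tilde IR)f=(fRf)\tilde I(fRf)=\tilde I$; I checked all of these and they hold. In short, what the paper's approach buys is brevity, but at the cost of an unjustified (for arbitrary lifts, false) claim, while your dualization argument costs a page of bookkeeping and closes that gap; it is the proof this proposition actually needs.
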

\begin{proof}
This is a straightforward application of the finiteness theorems for coalgebras and comodules. In fact, this can be also obtained as a consequence of general localization facts \cite{Ga}.\\
(i) First, since $N=TS(N)$, there is at least some $C$-comodule $X=S(N)$ with that property. If $y_1,\dots,y_n$ is a basis of $N=T(X)=Xe$, then there are $x_1,\dots,x_n\in X$ such that $x_ie=y_i$ ($Xe=\{xe|x\in X\}$). The subcomodule $M$ of $X$ generated by the $x_i$'s  is finite dimensional, and obviously, $Me=N$. \\
(ii) Similarly, take $x_i=ey_ie$ a basis of $D$; then the subcoalgebra $H$ of $C$ generated by the $y_i$'s is finite dimensional, and obviously has $eHe=D$. Finally, if $f=e\vert_H\in H^*$, then it is obvious that $fHf=eHe$, since, using Sweedler notation, $f\cdot h\cdot f=\sum\limits_hf(h_1)h_2f(h_3)=\sum\limits_he(h_1)h_2e(h_3)$ for $h\in H$ since $f(x)=e(x)$ for $x\in H$. 
 \end{proof}

We can now prove the main result of this section: that the ``tame" and ``not wild" properties are ``localizing", that is, that $C$ is tame/not wild if and only if every finite localization of $C$ is so. 

\begin{proposition}
Let $C$ be a pointed coalgebra (basic Schurian). Then: \\
(i) If $eCe$ is wild for some idempotent $e$, then $C$ is wild.\\
(ii) If $C$ is wild then there is a finite idempotent $e$ for which $eCe$ is wild.
\end{proposition}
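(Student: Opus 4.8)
The plan is to prove the two directions separately, using the adjunction machinery from Lemma \ref{l.adjoint} together with the local-global results already established. For part (i), suppose $eCe$ is wild, so there is a representation embedding $F:{\rm mod-}\Gamma_3(\KK)\longrightarrow eCe{\rm-comod}$. The idea is to transport this back to $C{\rm-comod}$ through the right adjoint $S$ of the localization functor $T=(-)e$. Recall from property {\bf (4)} that $S=eC\square_{eCe}-$ is right adjoint to $T$, is left exact, and satisfies $TS\cong{\rm Id}$. By Lemma \ref{l.adjoint}, $S$ is full and faithful, hence preserves indecomposables and respects isomorphisms. First I would note that $S$ need not preserve finite dimensionality in general (this is the finite-localization subtlety), but the composite $S\circ F$ lands in $C{\rm-Comod}$, and one checks that $F(\Gamma_3(\KK))$ is finite dimensional, so its coefficient coalgebra is a finite dimensional subcoalgebra $D\subseteq eCe$. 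Using the preceding proposition (part (ii)), lift $D$ to a finite dimensional subcoalgebra $H\subseteq C$ with $eHe=D$; then the image of $S\circ F$ can be realized inside $H{\rm-comod}$, which is finite dimensional. Thus $S\circ F:{\rm mod-}\Gamma_3(\KK)\longrightarrow C{\rm-comod}$ is a half-exact (left exact) representation embedding, and by Lemma \ref{l.weakwild} applied to the finite dimensional situation (or by reducing to the finite dimensional wild subcoalgebra $H$ via Lemma \ref{l.wildlocal}), $C$ is wild.

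For part (ii), suppose $C$ is wild. By Lemma \ref{l.wildlocal} (``not wild'' is a local property), there is a finite dimensional wild subcoalgebra $H$ of $C$. The goal is to produce a \emph{finite} idempotent $e\in C^*$ with $eCe$ wild. The natural choice is to take $e$ associated to the (finitely many) simple comodules appearing in $H$: more precisely, let $J\subseteq I$ be the finite set of indices $i$ with $S(i)$ a composition factor of $H$, and let $e$ be the idempotent of $C^*$ which is $\varepsilon$ on $E(i)$ for $i\in J$ and $0$ otherwise, as in property {\bf (3)}. This $e$ is a finite idempotent since only finitely many simples survive. The key point is then to compare $eCe$ with $H$: one shows that the localization $eHe$ (computed inside $H$ with $f=e|_H$) equals $H$ itself when $J$ captures exactly the simples of $H$, so $H$, being wild, embeds appropriately into $eCe{\rm-comod}$ via corestriction along $H=eHe=fHf\hookrightarrow eCe$, forcing $eCe$ to be wild by Lemma \ref{l.wildlocal} again.

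The main obstacle I expect is the bookkeeping in part (ii) around whether $H$ sits correctly inside $eCe$. The functor $(-)e:C{\rm-Comod}\longrightarrow eCe{\rm-Comod}$ is a localization, not a corestriction, so the wild subcategory $H{\rm-comod}$ does not literally map into $eCe{\rm-comod}$ by inclusion; rather, one must verify that for comodules supported on the simples $S(i)$, $i\in J$, the localization functor acts as a representation embedding (equivalently, that these comodules are unaffected, up to equivalence, by localizing away the complementary simples). The cleanest way around this is to observe that $H$-comodules have all their composition factors among $\{S(i)\}_{i\in J}$, and for such comodules the counit of the $(T,S)$ adjunction is an isomorphism, so $T=(-)e$ restricted to $H{\rm-comod}$ is full, faithful and exact; combined with the fact that $eHe\cong H$ is wild, this yields a wild subcoalgebra of $eCe$ and hence the wildness of $eCe$.

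Alternatively, and perhaps more robustly, for part (ii) I would bypass the delicate comparison by invoking Proposition \ref{p.2}: since $H$ is a finite dimensional wild coalgebra and $e|_H=f$ is an idempotent of $H^*$, if $fHf$ were tame then the finite dimensional dichotomy and Proposition \ref{p.2} would give a contradiction only in the wrong direction, so instead I would directly use that $H=fHf$ (by the choice of $J$ saturating the simples of $H$) to conclude $eCe\supseteq$ a copy of the wild coalgebra $H$, whence $eCe$ is wild. Throughout, the guiding principle is that localization at a finite idempotent is invisible on comodules whose support already lies in the retained set of simples, which is exactly what makes the finite dimensional wild witness survive into the localized category.
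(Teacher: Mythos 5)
Part (ii) of your proposal is essentially the paper's own argument: find a finite dimensional wild subcoalgebra $H\subseteq C$ via Lemma \ref{l.wildlocal}, choose a finite idempotent $e$ with $e\vert_H=\varepsilon_H$, note that then $H=fHf=eHe$ sits literally as a subcoalgebra of $eCe$, and conclude by the easy direction of Lemma \ref{l.wildlocal}. Your specific choice of $J$ (the composition factors of $H$) does work, but it silently uses that $H\subseteq\bigoplus_{i\in J}E(i)$ inside $C$ (if the projection of $H$ to $\bigoplus_{i\notin J}E(i)$ were nonzero, it would be a quotient comodule of $H$ whose socle consists of simples outside $J$, contradicting that its composition factors lie in $J$); the paper sidesteps this by taking \emph{any} finite set $F$ with $H\subseteq\bigoplus_{i\in F}E(i)$, which exists simply because $H$ is finite dimensional. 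Your worry about whether $T=(-)e$ restricted to $H{\rm-comod}$ is a representation embedding is then unnecessary: once $e\vert_H=\varepsilon_H$, corestriction along the subcoalgebra $H=eHe\subseteq eCe$ already does the job.

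Part (i), however, has a genuine gap. The step ``the image of $S\circ F$ can be realized inside $H{\rm-comod}$'' is false: the section functor $S=eC\square_{eCe}-$ does not preserve finite dimensionality, and lifting the coefficient coalgebra $D=\cf(F(\Gamma_3(\KK)))\subseteq eCe$ to $H\subseteq C$ gives no control over $S(F(N))$, whose coefficient coalgebra need not lie in $H$. Indeed, $S(M)$ is the Gabriel closure of $M$; it is a finitely copresented $C$-comodule (it embeds into copies of $Ce$), and such comodules are typically infinite dimensional --- for instance, if $C$ is the path coalgebra of the quiver with vertices $a,b$ and infinitely many arrows $a\to b$ (add three loops at $b$ to make the corner wild) and $e$ retains $b$, then $S$ applied even to a simple $eCe$-comodule is an infinite dimensional injective. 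This is exactly why Proposition \ref{p.equivalences}(iii) needs $eCe$ finite dimensional (Hom-computability) for $S$ to behave, and why the paper never runs the adjoint argument at the level of $C$ itself. So $S\circ F$ does not land in $C{\rm-comod}$ and cannot witness wildness in the sense of Definition \ref{d.1}(ii). Your parenthetical fallback (``reducing to the finite dimensional wild subcoalgebra $H$'') presupposes that $H$ is wild, which you never prove. The missing step is precisely the paper's: from $eCe$ wild, Lemma \ref{l.wildlocal} gives a finite dimensional wild $D\subseteq eCe$; the lifting proposition gives $D=eHe=fHf$ with $f=e\vert_H$ an idempotent of the finite dimensional $H^*$; then Proposition \ref{p.2} --- the finite dimensional corner result, where the adjoints genuinely do preserve finite dimensional modules and Lemma \ref{l.weakwild} applies --- yields that $H$ is wild; and only then does Lemma \ref{l.wildlocal} give that $C$ is wild.
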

\begin{proof}
(i) Assume $eCe$ is wild for some {\it arbitrary} idempotent. Since ``not-wild" is a local property (Lemma \ref{l.wildlocal}), this means that there is some finite dimensional subcoalgebra $D$ of $eCe$ which is wild. By the previous Proposition, $D=eHe$ for some finite dimensional subcoalgebra $H$ of $C$; but also $D=fHf$, with $f\in H^*$ an idempotent of $H^*$ ($f=e\vert_H$), which means that $D$ is a localization of $H$. Now, by proposition \ref{p.2}, we get that $H$ is wild, and finally, applying again the ``not wild is local" principle of Lemma \ref{l.wildlocal}, we get that $C$ is wild.\\
(ii) Suppose that $C$ is wild; then there is a finite dimensional subcoalgebra $H$ of $C$ which is wild. Let $e$ be a finite idempotent of $C^*$ such that $e\vert_H =\varepsilon_H$. This can be easily chosen as follows: since $C=\bigoplus\limits_{i\in I}E(i)=\bigcup\limits_{F{\rm\,finite}\subseteq I}\left(\bigoplus\limits_{i\in F}E(i)\right)$, we see that $H\subseteq \bigoplus\limits_{i\in F}E(i)$ for some finite subset $F$ of $I$, so $e$ can be chosen as $\varepsilon$ on $\bigoplus\limits_{i\in F}E(i)$ and $0$ on the complement $\bigoplus\limits_{i\notin F}E(i)$. Let $f=e\vert_H=\varepsilon\vert_H\in H^*$. \\
Then $f$ is the counit of $H$ and obviously $H=fHf\cong eHe$. (In fact, because of the choice of $e$, we see that {\it in this case}, $H=eHe$ can be regarded as a subcoalgebra of $C$.) Note that $eHe$ is a subcoalgebra of $eCe$. But $H\cong eHe$ is wild, and hence, $eCe$ is a wild coalgebra since it contains the wild subcoalgebra $eHe$.
\end{proof}

The above considerations justify the introduction of l-tame and l-wild coalgebras: $C$ is l-tame if every localization of $C$ at a finite idempotent $e$ is tame; and $C$ is l-wild if there is some localization $eCe$ at a finite idempotent $e$ which is wild (i.e. ``not l-wild" means every finite localization is ``not l-wild"). Now, using the tame/wild dichotomy for arbitrary coalgebras that we proved bevore, we can re-formulate this to the main result concerning tame/wild and localization, showing that l-tame and l-wild are not new notions but that the notions of tame and wild are equivalent, respectively, to their localizing correspondents, l-tame and l-wild, and so the dichotomy extends to the latter ones as well.

\begin{theorem}\label{t.main3}
A coalgebra $C$ (basic Schurian) is tame if and only if the localization $eCe$ at every every finite idempotent $e$ in $C^*$ is tame (equivalently, $C$ is l-tame); and $C$ is wild if and only if there is a finite idempotent $e$ for which the localization $eCe$ is wild (i.e. $C$ is l-wild). Consequently, any coalgebra is either l-tame (equivalently, tame) or l-wild (equivalently, wild), and not both.
\end{theorem}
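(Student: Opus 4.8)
The plan is to observe that Theorem~\ref{t.main3} is a purely formal consequence of two results already in hand: the Proposition immediately preceding it, which establishes the equivalence of ``wild'' and ``l-wild'', and the tame--wild dichotomy for arbitrary coalgebras proved in Section~\ref{s.main}. No genuinely new construction is needed; the work is entirely in combining these. I would first record the wild characterization verbatim, then obtain the tame characterization by contraposition against the dichotomy, and finally read off the ``consequently'' clause.

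For the wild half, there is essentially nothing to prove beyond citing the previous Proposition: its part (ii) gives that if $C$ is wild then some \emph{finite} idempotent $e$ yields a wild localization $eCe$, and its part (i) gives the converse --- if $eCe$ is wild for some idempotent (in particular for a finite one), then $C$ is wild. Thus ``$C$ wild $\iff$ $eCe$ wild for some finite idempotent'', i.e. $C$ is wild if and only if $C$ is l-wild.

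The tame half I would derive by contraposition, feeding the wild half into the dichotomy. Suppose $C$ is tame. If some finite localization $eCe$ failed to be tame, then applying the dichotomy to the coalgebra $eCe$ (again a coalgebra over the same algebraically closed field) would force $eCe$ to be wild; part (i) of the previous Proposition would then make $C$ wild, contradicting that $C$ cannot be both tame and wild. Hence every finite localization is tame, so $C$ is l-tame. Conversely, assume every $eCe$ (for $e$ finite) is tame but, for contradiction, that $C$ is not tame; by the dichotomy $C$ is wild, so by part (ii) some finite $eCe$ is wild, contradicting --- via the dichotomy for $eCe$ --- the tameness of that $eCe$. Thus l-tame implies tame. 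The final assertion is then immediate: since tame $\iff$ l-tame and wild $\iff$ l-wild, and tame/wild is an exclusive dichotomy by Section~\ref{s.main}, exactly the same exclusive dichotomy holds for l-tame/l-wild.

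I do not expect a real obstacle here; the substance sits in the preceding Proposition (which in turn rests on Lemma~\ref{l.wildlocal} and Proposition~\ref{p.2}) and in the dichotomy itself. The only points requiring a moment's care are that a finite idempotent is in particular an idempotent, so part (i) of the previous Proposition applies to it, and that the dichotomy is being invoked twice at two different coalgebras --- at $C$ and at the localizations $eCe$ --- which is legitimate since each $eCe$ is itself a coalgebra over $\KK$.
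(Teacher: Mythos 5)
Your proposal is correct and is essentially the paper's own argument: the paper states Theorem \ref{t.main3} without a separate proof, presenting it exactly as you do --- the wild half is the preceding Proposition verbatim, and the tame half follows by playing that Proposition off against the tame--wild dichotomy of Section \ref{s.main}, applied both to $C$ and to the localizations $eCe$. Your explicit contraposition argument fills in precisely what the paper leaves as a ``re-formulation,'' with no divergence in method.
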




\section{Relation to fc-tame and fc-wild}\label{s.fc}

In this section, we explain the l-tame and l-wild notions vis-a-vis Simson's fc-tame / fc-wild dichotomy. In \cite{si11}, in order to approach the tame-wild dichotomy, the notions of fc-tame and fc-wild were introduced. These notions used the category $C{\rm-Comod}^E_{\rm fc}$ of finitely $E$-copresented $C$-comodules, where $E=\bigoplus\limits_{i\in F}E(i)^{k_i}$ is a socle-finite $C$-comodule, that is, $soc(E)$ is finite dimensional (thus $F$ is a finite subset of $I$ and $k_i\in \NN$). The category $C{\rm-Comod}^E_{\rm fc}$ is the full subcategory of $C{\rm-Comod}$ consisting of all left $C$-comodules $X$ for which there is a short exact sequence $$0\rightarrow X\rightarrow E^k\rightarrow E^n$$
This category may obviously be defined for arbitrary $E$. Also, in \cite{si11} a comodule is said to be finitely copresented if there is an exact sequence $0\rightarrow X\rightarrow E_1\rightarrow E_2$, for $E_1,E_2$ socle-finite injective comodules. Equivalently, there exist some socle-finite injective $E$ for which $M$ is in $C{\rm-Comod}^E_{\rm fc}$. It is immediate to note also that $C{\rm-Comod}^E_{\rm fc}\subset C{\rm-Comod}^{E'}_{\rm fc}$  if $E\subset E'$ (so $E$ is a direct summand of $E'$). The category of {\it all} finitely copresented comodules is denoted by $C{\rm-Comod}_{\rm fc}$; that is, this is the full subcategory of $C{\rm-Comod}$ of all comodules which are finitely $E$-copresented for some socle-finite injective $E$ . A comodule $X$ which embeds in some $E^n$ for a socle-finite injective comodule $E$ is called {finitely cogenerated} in \cite{si11}.

There is another natural notion of finite cogeneration/copresentation which is also used sometimes, namely, finitely $C$-copresented comodules/$C$-cogenerated comodules. The category of finitely $C$-cogenetared comodules will be denoted $C{\rm-Comod}^C_{\rm fc}$ as above. This is a natural notion, since a comodule $M$ is finitely $C$-copresented if and only if $M^*$ is finitely presented (in the usual sense, as a module over $C^*$); and $M$ is finitely $C$-cogenerated if and only if $M^*$ is finitely generated; see also \cite{I5}. One sees that a finitely copresented comodule in the above sense of Simson is automatically finitely $C$-copresented.


A coalgebra $C$ is called Hom-computable or computable for short if $\Hom_C(E(i),E(j))$ is finite dimensional for all $i,j$. We note the following as a key fact about this situation; it implicitly plays a part in the work of \cite{si11}: if $C$ is computable, then for every finite idempotent $e$, the localization functor also has a left adjoint $L$. Indeed, this can be seen either directly since in this case, the coalgebra $eCe\cong \Hom_C(eC,eC)$ is finite dimensional, or by using that $eCe$ and $Ce$ are both finite dimensional, and therefore $Ce$ is quasi-finite over $eCe$ (in fact, $C$ is Hom-computable if and only if $eCe$ is finite dimensional for all finite idempotents $e$). Thus, one can apply a standard argument present in finite dimensional algebras to relate the categories $C{\rm-Comod}^E_{\rm fc}$ and $eCe{\rm-comod}={\rm mod-}(eCe)^*={\rm mod-}eC^*e$ via the functor $L$. Nevertheless, we note that such a connection exists in general, even in absence of the left adjoint $L$, via the right adjoint $S$ of $T$ (which always exists). Namely, the functor $S$ induces an equivalence of categories between $eCe{\rm-comod}$ (finite dimensional left $eCe$-comodules) and a subcategory of $C{\rm-comod}$ which is the ``image" of $S$. Let ${\rm im}(S)=S(eCe{\rm-comod})$ denote this subcategory. In order to give the most general and natural setting where the above correspondence works, but also for proper context inclusion, we need some terminology.

\subsection*{Coalgebras of finite type}

Recall \cite{HR} that a coalgebra is said to be of {\it finite type} if $C_1$ (the second term of the coradical filtration) is finite dimensional. It is proved in \cite[4.1.1]{HR} that such a coalgebra has the property that $C^*$ is (left and right) almost Noetherian (an algebra $A$ is left almost Noetherian if every cofinite left ideal of $A$ is finitely generated; a coalgebra $C$ for which $C^*$ is left almost Noetherian is said to be left {\it strongly reflexive} \cite{HR}). This is closely related to a few other notions, especially co-Noetherian and artinian comodules; \cite{GTNT, I4, I5}. We only note that if $C$ is strongly reflexive (in the above sense), then for every finite dimensional left subcomodule $N$ of $C$, $C/N$ is finitely $C$-cogenerated because $N^\perp$ is a finitely generated ideal (\cite[Section 2]{I4}; see also \cite{GTNT}). Moreover, similarly, if $N$ is a finite dimensional subcomodule of $C^n$, then $C^n/N$ is finitely $C$-cogenerated as well (since $(C^*)^n$ has the same property that submodules of finite codimension are finitely generated). This shows that in this case (when $C$ is strongly corelfexive, equivalently, $C^*$ is almost Noetherian), every finite dimensional $C$-comodule is finitely $C$-cogenerated. Also, when $C_0$ is finite dimensional then finitely cogenerated/copresented and finitely $C$-cogenerated/$C$-copresented comodules coincide. This shows that if $C$ is of finite type then every finite dimensional $C$-comodule is finitely copresented, and finitely copresented and finitely $C$-copresented comodules coincide.\\
A converse of this is also true. Assume $C_0$ is finite dimensional (so $C$ is socle-finite; in this case $C$ is said to be {\it almost connected}). 
If $C$ has the property that finite dimensional comodules are finitely ($C$-)copresented, then $C/C_0$ is finitely cogenerated: $C/C_0\hookrightarrow C^n$. We get that $C_1/C_0$ is finite dimensional, and since $C_0$ is finite dimensional, we see that $C_1$ is finite dimensional and so $C$ is of finite type. In fact, in view of the importance of the condition that finite dimensional $C$-comodules be finitely copresented, we introduce the following definition; recall that a comodule $M$ is quasifinite if $\Hom(S,M)$ is finite dimensional for every simple (equivalently, finite dimensional) comodule $S$.

\begin{definition}
(i) We say that a $C$-comodule $M$ over a coalgebra is f-quasifinite if $M/N$ is quasifinite for every finite dimenisonal $C$-subcomodule $N$ of $M$. \\
(ii) We say that a $C$-comodule $M$ over a coalgebra is f-finite if $M/N$ is socle-finite for every finite dimensional subcomodule $N$ of $M$. \\
(iii) We say that coalgebra $C$ is (left) f-quasifinite, respectively (left) f-finite, if $C$ is f-quasifinite, respectively, f-finite, as a left $C$-comodule.
\end{definition}

We note that the closely related notion of {\it strongly quasifinite} comodules (and coalgebras) also exists \cite{GTNT}: a comodule $M$ over a coalgebra is strongly quasifinite if $M/N$ is quasifinite for {\it every} subcomodule $N$ of $M$. Hence, ``f-quasifinite" is weaker than ``strongly quasifinite", and it is stronger than ``quasifinite". Also, the notion of f-finite is obviously related to the notion of finitely cogenerated (in the above sense of Simson \cite{si11}), and it appears in other places; for instance, if all injective indecomposables are f-finite then the category of $C$-comodules is closed under extensions inside the category of all $C^*$-modules \cite{I2}. Both locally finite and f-finite are notions which can be completely formulated in terms of the Ext quiver of $C$ (and of $C{\rm-Comod}$). While the notion of locally finite is equivalent to the Ext quiver having finitely many arrows between any two vertices, the notion of f-finite a stronger concept and is about the Ext quiver having vertices of finite degree, as seen below. The following proposition justifies its introduction vis-a-vis representation theoretic properties of $C$; its proof is not difficult, and since it is not essential for the next results, we leave it to the reader or future work. 

\begin{proposition}
(i) A coalgebra $C$ is left f-quasifinite if and only if every finite dimensional left $C$-comodule is finitely $C$-copresented; if $C$ is almost connected (i.e. $C_0$ is finite dimensional), this is further equivalent to the statement that every finite dimensional left $C$-comodule is finitely copresented.\\
(ii) A coalgebra is left f-finite if and only if every vertex in the Ext quiver of $C$ has finite in-bound degree (the number of arrows that arrive at each vertex is finite), and this is further equivalent to the statement that every finite dimensional left $C$-comodule is finitely copresented. In particular, in this case, $C$ is also locally finite, and f-quasifinite.
\end{proposition}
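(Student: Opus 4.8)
The plan is to translate every finiteness condition in the statement into numerical data on the Ext quiver, using a single device: the long exact $\Hom$/$\Ext$ sequence of an injective copresentation. Recall that for a comodule $Y$ the multiplicity of $S(j)$ in ${\rm soc}(Y)$ is $\dim\Hom(S(j),Y)$, and that $Y$ is quasifinite iff all these numbers are finite, $Y$ is socle-finite iff their sum is finite, $Y$ is finitely $C$-cogenerated (embeds in some $C^n$) iff $\sup_j\dim\Hom(S(j),Y)<\infty$, and $Y$ is finitely cogenerated in Simson's sense iff ${\rm soc}(Y)$ is finite dimensional. Thus each notion becomes a statement about the single function $j\mapsto\dim\Hom(S(j),Y)$: ``every value finite'', ``sum finite'', or ``supremum finite''.

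The master formula I would record is this: for an exact sequence $0\to N\to E\to E/N\to 0$ with $E$ injective, applying $\Hom(S(j),-)$ and using $\Ext^1(S(j),E)=0$ yields
$$\dim\Hom(S(j),E/N)=\dim\Hom(S(j),E)-\dim\Hom(S(j),N)+\dim\Ext^1(S(j),N).$$
Taking $E=C=\bigoplus_i E(i)$, so $\dim\Hom(S(j),C)=1$ for all $j$, and letting $N$ range over finite dimensional subcomodules, the right-hand side equals $1+\dim\Ext^1(S(j),N)$ for all but finitely many $j$. Hence $C/N$ is quasifinite for every such $N$ iff $\dim\Ext^1(S(j),S(l))<\infty$ for all $j,l$, i.e. iff $C$ is locally finite; this is the f-quasifinite $\Leftrightarrow$ locally finite core of the argument, and, together with \cite[Lemma 1.2]{I2}, it yields the ``in particular'' clauses of (ii). For the socle-finite condition one works summand-by-summand with the indecomposable injectives $E(i)$ (a finite dimensional $N$ meets only finitely many of them), where $\dim\Hom(S(j),E(i))=\delta_{ij}$, so that $\dim\Hom(S(j),E(i)/N)=\dim\Ext^1(S(j),N)$ for almost all $j$; then every $E(i)/N$ is socle-finite iff $\sum_j\dim\Ext^1(S(j),S(l))<\infty$ for each $l$, which is exactly finite in-bound degree at each vertex. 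This is the f-finite $\Leftrightarrow$ finite in-bound degree equivalence.

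Next I would connect these to copresentation. Given finite dimensional $X$, embed it in its injective hull $E(X)$ (a socle-finite injective) and apply the master formula to $0\to X\to E(X)\to E(X)/X\to 0$: since $\dim\Hom(S(j),E(X))=0$ for $j$ outside the finite socle support of $X$, one gets $\dim\Hom(S(j),E(X)/X)=\dim\Ext^1(S(j),X)$ for almost all $j$. Hence $X$ has a copresentation $0\to X\to E_1\to E_2$ by socle-finite injectives iff $E(X)/X$ is socle-finite iff $\sum_j\dim\Ext^1(S(j),X)<\infty$; reducing $X$ along its composition factors by additivity of $\Ext$ in $j$, this holds for all $X$ iff $\sum_j\dim\Ext^1(S(j),S(l))<\infty$ for all $l$, i.e. iff $C$ is f-finite, proving the copresentation clause of (ii). Replacing the injective by $C^k$ gives, the same way, that $X$ is finitely $C$-copresented iff the cokernel $C^k/X$ embeds in some $C^n$, i.e. iff $\sup_j\dim\Ext^1(S(j),X)<\infty$. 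When $C$ is almost connected there are finitely many vertices, finitely $C$-copresented and finitely copresented coincide, and ``supremum finite'', ``sum finite'' and ``each value finite'' all collapse to local finiteness, giving the full chain of equivalences with f-quasifinite asserted in (i).

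The main obstacle is exactly this last bookkeeping. A copresentation is only required to \emph{exist} (over all socle-finite injectives, resp. all $C^k$), while the master formula pins down the socle of only one canonical cokernel; one must argue no other embedding does better, which works because the connecting homomorphisms force $\dim\Hom(S(j),\text{cokernel})$ to be independent of the chosen embedding of $X$. The second delicate point is the passage from arbitrary finite dimensional $X$ to simple comodules, requiring the $\Ext$-estimates to behave additively along composition series uniformly in $j$; here the difference between the ``supremum finite'' condition governing f-quasifinite and $C$-copresentation and the ``sum finite'' condition governing f-finite and copresentation must be handled carefully, and it is precisely the almost-connected hypothesis in (i) that makes the vertex set finite and lets these two conditions coincide with local finiteness.
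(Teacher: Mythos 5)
The paper contains no proof of this proposition to compare against: it explicitly says the proof ``is not difficult\dots we leave it to the reader or future work.'' So I can only assess your argument on its own terms. Your master formula is correct (for $0\to N\to E\to E/N\to 0$ with $E$ injective and $\Hom(S(j),E)$ finite dimensional, the vanishing of $\Ext^1(S(j),E)$ gives exactly the stated count), and the dictionary it produces is the right one: f-quasifiniteness of $C$ amounts to local finiteness, f-finiteness of the indecomposable injectives $E(i)$ amounts to finiteness of in-bound degrees, a finite dimensional $X$ is finitely copresented in Simson's sense iff $\sum_j\dim\Ext^1(S(j),X)<\infty$, and finitely $C$-copresented iff $\sup_j\dim\Ext^1(S(j),X)<\infty$. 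Your treatment of the two delicate points (independence of the cokernel's socle from the chosen embedding, and the reduction to simples along composition series) is also sound, and this is surely the kind of argument the paper had in mind.

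The genuine gap is in part (i). The first equivalence of (i) carries no hypothesis on $C$, but you only establish it ``when $C$ is almost connected''; the unconditional claim is never addressed. Moreover, your own dictionary shows it cannot be established as stated: f-quasifinite reduces to the pointwise condition that each $\dim\Ext^1(S(j),S(l))$ be finite, while ``every finite dimensional comodule is finitely $C$-copresented'' reduces to the uniform condition $\sup_j\dim\Ext^1(S(j),X)<\infty$ for each $X$, and these genuinely differ. Concretely, let $C=\KK Q$ for the quiver $Q$ with vertices $v,w_1,w_2,\dots$ and exactly $n$ arrows from $w_n$ to $v$. Then $C$ is locally finite, hence f-quasifinite; but for any embedding $S_v\hookrightarrow C^k$ the master formula gives $\dim\Hom(S_{w_n},C^k/S_v)=k+n$, unbounded in $n$, so $C^k/S_v$ embeds in no $C^m$ and $S_v$ is not finitely $C$-copresented. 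So either the statement must be read with a different notion of copresentation --- e.g.\ copresentations by \emph{quasifinite} injectives, for which your argument does yield the unconditional equivalence with f-quasifinite --- or the almost connected hypothesis is needed already for the first clause of (i). A blind proof should flag this rather than silently retreat to the almost connected case. A smaller issue of the same nature occurs in (ii): you work ``summand-by-summand,'' i.e.\ with f-finiteness of each $E(i)$, but the paper's literal definition takes the comodule in question to be $C$ itself, and then $N=0$ forces ${\rm soc}(C)=C_0$ to be finite dimensional, so under the literal reading the stated equivalence fails for any cosemisimple $C$ with infinitely many simples. Your per-$E(i)$ reading is the only one under which (ii) holds (and the only one compatible with the paper's remark that right semiperfect coalgebras are left f-finite), but that substitution should be made explicitly, not tacitly.
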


A graph theoretic characterization of f-quasifinite in terms of the Ext quiver is also possible, but lengthier to state (and not of immediate consequence here). As noted above, coalgebras (and also comodules) of finite type are such examples of f-finite and f-quasifinite coalgebras. Also, right semiperfect coalgebras are left f-finite (and hence, also left-quasifinite), and more generally, Hom-computable coalgebras (and comodules) are easily seen to be f-finite and f-quasifinite. Note that the inclusion of categories $C{\rm-Comod}_{\rm fc}\subseteq C{\rm-Comod}_{\rm fc}^C$ is an equality exactly when $C_0$ is finite dimensional ($C$ is almost connected). 

The following proposition shows the relevance of these notions in relating $C$-comodules with its localizations via the localization functor and its adjoint $S$. Recall that ${\rm im}(S)$ denotes the category $S(eCe{\rm-comod})$.

\begin{proposition}\label{p.equivalences}
Let $C$ be a coalgebra, $e\in C^*$ be an idempotent, and $E=eC$ the corresponding injective left $C$-comodule. Then:\\
(i) the (restriction of the) functor $S$ induces an equivalence of categories  $S:eCe{\rm-Comod}^{eCe}_{\rm fc}\longrightarrow C{\rm-Comod}^E_{\rm fc}$; moreover, when $e$ is finite, this produces an equivalence $eCe{\rm-Comod}_{\rm fc} \simeq C{\rm-Comod}^E_{\rm fc}$. \\
(ii) If the coalgebra $eCe$ is f-finite (in particular, if it is of finite type, in which case $e$ is a necessarily finite idempotent), then ${\rm im}(S)\subseteq C{\rm-Comod}^E_{\rm fc}$, \\
(iii) If $eCe$ is finite dimensional, ${\rm im}(S)=C{\rm-Comod}^E_{\rm fc}$, and $S$ gives an equivalence between $eCe{\rm-comod}$ and $C{\rm-Comod}^E_{\rm fc}$. In particular, this equality holds when $C$ is a Hom-computable coalgebra.
\end{proposition}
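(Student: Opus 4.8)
The plan is to reduce everything to the structural properties of the adjunction $(T,S)$ recorded in item {\bf (4)} above, the key point being that $S$ is a fully faithful, left exact functor whose essential image is precisely the class of ``local'' objects of the idempotent monad $ST$. First I would observe that $S$ is full and faithful: $(T,S)$ is an adjoint pair with $T$ the left adjoint and $TS\cong{\rm Id}$ via the counit, so Lemma \ref{l.adjoint} applies directly. Next I would compute $S$ on the relevant cogenerators. Since $eC$ is a right $eCe$-comodule and $M\square_D D\cong M$ for any right $D$-comodule $M$, we get $S(eCe)=eC\square_{eCe}eCe\cong eC=E$; as $S$ is additive and preserves finite direct sums, $S((eCe)^k)\cong E^k$ for every finite $k$. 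Dually, $T=(-)e$ is exact with $T(E)=(eC)e=eCe$, hence $T(E^k)=(eCe)^k$.

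For part (i), the inclusion $S\big(eCe{\rm-Comod}^{eCe}_{\rm fc}\big)\subseteq C{\rm-Comod}^E_{\rm fc}$ is immediate: $S$ is left exact (being a right adjoint), so it sends an exact sequence $0\to Y\to (eCe)^k\to (eCe)^n$ to an exact sequence $0\to S(Y)\to E^k\to E^n$. For essential surjectivity I would use that, since $TS\cong{\rm Id}$, the monad $ST$ on $C{\rm-Comod}$ is idempotent; consequently the unit $\eta_X\colon X\to ST(X)$ is an isomorphism whenever $X$ lies in the essential image of $S$, in particular for $X=E^k$ and $X=E^n$. Given $X\in C{\rm-Comod}^E_{\rm fc}$ with copresentation $0\to X\to E^k\to E^n$, I would apply the left exact functor $ST$ and compare with the natural transformation $\eta$; the resulting ladder of left exact rows has its two right vertical maps isomorphisms, so a five lemma (equivalently, a kernel) argument forces $\eta_X$ to be an isomorphism. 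Thus $X\cong S(T(X))=S(Xe)$, and since $T$ is exact we have $0\to Xe\to (eCe)^k\to (eCe)^n$, i.e. $Xe\in eCe{\rm-Comod}^{eCe}_{\rm fc}$. Together with full faithfulness this yields the equivalence in (i). I expect this verification that $\eta_X$ is an isomorphism on finitely $E$-copresented objects to be the main obstacle, since it is exactly where the interplay between the exactness of $T$, the left exactness of $S$, and the idempotency of $ST$ must be handled with care.

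For the ``moreover'' clause, when $e$ is finite the coalgebra $eCe$ is almost connected (its coradical $\mathrm{soc}(eCe)$ is finite dimensional), so finitely copresented and finitely $eCe$-copresented comodules coincide by the observation recorded before the definition of f-quasifinite; hence $eCe{\rm-Comod}_{\rm fc}=eCe{\rm-Comod}^{eCe}_{\rm fc}$ and (i) upgrades to $eCe{\rm-Comod}_{\rm fc}\simeq C{\rm-Comod}^E_{\rm fc}$. Part (ii) then follows by combining (i) with the characterization of f-finite coalgebras: if $eCe$ is f-finite, every finite dimensional $eCe$-comodule is finitely copresented, hence finitely $eCe$-copresented (a finitely copresented comodule is always finitely $C$-copresented), so $eCe{\rm-comod}\subseteq eCe{\rm-Comod}^{eCe}_{\rm fc}$ and therefore ${\rm im}(S)=S(eCe{\rm-comod})\subseteq C{\rm-Comod}^E_{\rm fc}$.

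Finally, for (iii), if $eCe$ is finite dimensional then a comodule is finitely $eCe$-copresented exactly when it is finite dimensional: such a comodule embeds into a finite dimensional $(eCe)^k$, and conversely every finite dimensional comodule over the finite dimensional coalgebra $eCe$ is finitely copresented over it. Thus $eCe{\rm-comod}=eCe{\rm-Comod}^{eCe}_{\rm fc}$, and (i) gives both the equivalence between $eCe{\rm-comod}$ and $C{\rm-Comod}^E_{\rm fc}$ and the equality ${\rm im}(S)=S(eCe{\rm-comod})=C{\rm-Comod}^E_{\rm fc}$. The Hom-computable case is the special instance of this, since a Hom-computable $C$ has $eCe$ finite dimensional for every finite idempotent $e$.
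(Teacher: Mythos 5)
Your proposal is correct and takes essentially the same route as the paper: full faithfulness of $S$ from Lemma \ref{l.adjoint}, the computation $S(eCe)\cong E$ together with left exactness of $S$ for one inclusion, and your idempotent-monad/kernel argument is precisely the paper's diagram comparing $0\to X\to E^k\to E^n$ with its image under the left exact functor $ST$ via the unit, forcing $X\cong S(T(X))$ with $T(X)$ finitely $eCe$-copresented. Parts (ii) and (iii) are also handled identically (f-finiteness gives $eCe{\rm-comod}\subseteq eCe{\rm-Comod}^{eCe}_{\rm fc}$, and finite dimensionality of $eCe$ collapses all three subcategories to $eCe{\rm-comod}$).
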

\begin{proof}
Since $S$ is full and faithful, one only needs to identify the image of $S$ on the corresponding subcategories of  $eCe{\rm-Comod}$.\\ 
(i) Since $S$ is left exact, and $S(eCe)=eC\square_{eCe}eCe=eC=E$, it follows easily that finitely $eCe$-copresented $eCe$-comodules go to finitely $E$-copresented $C$-comodules.  Conversely, if $0\rightarrow X\rightarrow E^n\rightarrow E^k$ is a finitely $E$-copresented left $C$-comodule, consider the diagram
$$\xymatrix{
0\ar[r] & X\ar[r]\ar@{..>}[d] & E^n \ar[r]\ar[d]^{\cong} & E^k\ar[d]^\cong \\
0\ar[r] & ST(X)\ar[r] & ST(E^n)\ar[r] & ST(E^k)
}$$
The solid vertical arrows are natural isomorphisms since $ST(E)=ST(eC)=eC\square_{eCe}(eC)e\cong eC=E$, and so it follows that the (induced) dotted arrow is an isomorphism. Moreover, $Y=T(X)$ is obviously finitely $eCe$-copresented (since $X$ is finitely $E$-copresented and $T$ is exact); hence, $X\cong S(Y)$ for a finitely $eCe$-copresented $eCe$-comodule $Y$; thus, the restriction and corestriction functor $S:eCe{\rm-Comod}^{eCe}_{\rm fc}\longrightarrow C{\rm-Comod}^E_{\rm fc}$ is full, faithful and dense, hence an equivalence. Also, when $e$ is finite, $eCe$ is almost connected and so $eCe{\rm-Comod}_{\rm fc}=eCe{\rm-Comod}^{eCe}_{\rm fc}$, and the last statement follows.\\
(ii) and (iii) When $eCe$ f-finite, we have 
$$eCe{\rm-comod}\subset eCe{\rm-Comod}_{\rm fc},$$
as noted in Proposition \ref{p.equivalences}(ii), so $S(eCe{\rm-comod})$ is contained in $C{\rm-Comod}^E_{\rm fc}$; when $eCe$ is finite dimensional, the statement follows as then we have equalities $eCe{\rm-Comod}_{\rm fc}=eCe{\rm-comod}=eCe{\rm-Comod}^{eCe}_{\rm fc}$. 
\end{proof}

Perhaps an important note at this point is that even though $S$ is not exact, it becomes exact as an equivalence between $C{\rm-Comod}^E_{\rm fc}$ and $eCe{\rm-Comod}^{eCe}_{\rm fc}$, where $C{\rm-Comod}^E_{\rm fc}$ is viewed as an exact abelian category with structure transported from $eCe{\rm-Comod}$; its structure as an abelian category is thus not enherited from $C{\rm-Comod}$ (cokernels might be different). 

Now, \cite[Lemma 2.7 and Proposition 2.8]{si11} and their proofs can be interpreted as statements connecting fc-tame/fc-wild with l-tame/and l-wild, and the next proposition is a re-formulation of those results (and follows from them): 

\begin{proposition}\label{p.Simson}
Let $C$ be a Hom-computable coalgebra. Then:\\
(i) $C$ is fc-tame if and only if $C$ is l-tame.\\
(ii) $C$ is fc-wild if and only if $C$ is l-wild.
\end{proposition}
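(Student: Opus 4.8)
The plan is to derive the proposition from the categorical equivalence of Proposition~\ref{p.equivalences}(iii) together with the invariance of the tame and wild properties under equivalence of abelian categories. Recall that the fc-tame and fc-wild properties of \cite{si11} are phrased in terms of the categories $C{\rm-Comod}^E_{\rm fc}$ as $E$ ranges over the socle-finite injective left comodules: fc-tame asks that each such category be of tame type, and fc-wild asks that at least one be of wild type. Since $C{\rm-Comod}^E_{\rm fc}$ depends only on the simple types occurring in ${\rm soc}(E)$ and not on their multiplicities, I may assume $E=eC$ for a finite idempotent $e\in C^*$; conversely each finite idempotent $e$ gives such an $E=eC$. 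This matches the data indexing the fc-conditions (socle-finite injectives) with the data indexing the l-conditions (finite idempotents).

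Because $C$ is Hom-computable, every $eCe$ is finite dimensional, so Proposition~\ref{p.equivalences}(iii) applies and the right adjoint $S$ of the localization functor restricts to an equivalence
$$ S:\, eCe{\rm-comod}\ \stackrel{\simeq}{\longrightarrow}\ C{\rm-Comod}^E_{\rm fc}, $$
where the target carries the exact abelian structure transported from $eCe{\rm-comod}$ (as noted after Proposition~\ref{p.equivalences}). Since $eCe$ is a finite dimensional coalgebra, $eCe{\rm-comod}$ is exactly the category of finite dimensional modules over the finite dimensional algebra $(eCe)^*=eC^*e$, and for such a coalgebra the tame and wild properties of Definition~\ref{d.1} coincide with the classical Drozd notions.

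I would then transport tame and wild along $S$. An equivalence of abelian $\KK$-categories is exact, preserves indecomposables and reflects isomorphisms, and carries the parametrizing families of Definition~\ref{d.1} to parametrizing families (a representation embedding composed with an equivalence is again a representation embedding). Hence $C{\rm-Comod}^E_{\rm fc}$ is of tame (resp. wild) type if and only if $eCe{\rm-comod}$ is, i.e. if and only if $eCe$ is tame (resp. wild). Quantifying over all socle-finite injectives $E$, equivalently over all finite idempotents $e$, yields
$$ C\ \text{fc-tame}\ \Longleftrightarrow\ eCe\ \text{tame for every finite }e\ \Longleftrightarrow\ C\ \text{l-tame}, $$
which is (i), and
$$ C\ \text{fc-wild}\ \Longleftrightarrow\ eCe\ \text{wild for some finite }e\ \Longleftrightarrow\ C\ \text{l-wild}, $$
which is (ii).

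The step I expect to require the most care is matching the tame condition at the level of dimension vectors: $S$ need not preserve dimension vectors literally, so one must check that the finitely many families produced for each fixed dimension vector of $eCe$-comodules cover, up to finitely many exceptions, each copresentation datum on the $C{\rm-Comod}^E_{\rm fc}$ side, and conversely. Since $S$ is an equivalence it induces a bijection on isomorphism classes compatible with the length filtration on both sides, so this reduces to a re-indexing that keeps the families finite for each fixed target datum. This is precisely what is encoded in \cite[Lemma~2.7 and Proposition~2.8]{si11}; accordingly the cleanest route is to use the equivalence of Proposition~\ref{p.equivalences}(iii) to identify the two sides and then invoke those results, of which the present proposition is a reformulation.
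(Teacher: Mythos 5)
Your proposal is correct and follows essentially the same route as the paper: the paper gives no independent proof either, stating that the proposition is a reformulation of, and follows from, \cite[Lemma 2.7 and Proposition 2.8]{si11} once the localization equivalence of Proposition \ref{p.equivalences}(iii) (available because Hom-computability makes every $eCe$ finite dimensional) is used to identify $C{\rm-Comod}^E_{\rm fc}$ with $eCe{\rm-comod}$. Your extra scaffolding on transporting tame/wild along the equivalence, and your deferral to \cite{si11} for the coordinate-vector bookkeeping, matches what the paper does in the surrounding discussion (Proposition \ref{p.fcwild} and the fc-tameness subsection).
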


Thus, via this proposition (which is a consequence of some of the work in \cite{si11}), the fc-tame/fc-wild dichotomy can be interpreted as a particular case of l-tame/l-wild dichotomy, which, in turn, is nothing else than the tame/wild dichotomy of coalgebras (in \cite{si11}, this was used to deduce the tame/wild dichotomy for computable coalgebras). It would be interesting to study these properties beyond Hom-computable coalgebras, at least in the case of coalgebras $C$ where $eCe$ is of finite type for all finite idempotents $e$. 

In fact, with the methods here, the part (ii) of the above can recovered directly. The following easy Lemma is useful for this purpose.

\begin{lemma}\label{l.extfc}
The category $C{\rm-Comod}^E_{\rm fc}$ is closed under extensions in $C{\rm-Comod}$.
\end{lemma}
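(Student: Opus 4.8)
The goal is to show that the subcategory $C\text{-Comod}^E_{\rm fc}$ of finitely $E$-copresented comodules is closed under extensions inside $C\text{-Comod}$. That is, given a short exact sequence $0\rightarrow X\rightarrow Y\rightarrow Z\rightarrow 0$ in $C\text{-Comod}$ with both $X$ and $Z$ in $C\text{-Comod}^E_{\rm fc}$, I must produce a two-step copresentation $0\rightarrow Y\rightarrow E^k\rightarrow E^n$ witnessing $Y\in C\text{-Comod}^E_{\rm fc}$.

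The plan is to run the standard horseshoe-type argument that is available because $E$ is an injective comodule. First I would write down the given copresentations $0\rightarrow X\rightarrow E^{a}\xrightarrow{f} E^{b}$ and $0\rightarrow Z\rightarrow E^{c}\xrightarrow{g} E^{d}$. The key point is that the monomorphism $X\hookrightarrow E^{a}$, composed with $X\hookrightarrow Y$, need not extend along $Y$ in general, but \emph{does} extend because $E^{a}$ is injective: the inclusion $X\hookrightarrow E^a$ lifts to a map $Y\rightarrow E^a$ whose restriction to $X$ is the original embedding. Combining this lift with the projection $Y\twoheadrightarrow Z$ followed by $Z\hookrightarrow E^{c}$, I obtain a map $Y\rightarrow E^{a}\oplus E^{c}=E^{a+c}$. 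A diagram chase (snake-lemma / five-lemma style) shows this combined map is a monomorphism: its kernel maps trivially to $Z\hookrightarrow E^c$, hence factors through $X$, on which the first component is the injection $X\hookrightarrow E^a$, so the kernel is zero. This gives an embedding $Y\hookrightarrow E^{a+c}$.

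Next I would identify the cokernel of $Y\hookrightarrow E^{a+c}$, or at least embed it into a further power of $E$, so as to complete the length-two copresentation. Since the construction is compatible with the copresentations of $X$ and $Z$, the cokernel $W=E^{a+c}/Y$ sits in a short exact sequence built from $E^a/X$ and $E^c/Z$; because $E^a/X$ embeds into $E^b$ and $E^c/Z$ embeds into $E^d$ (these are the tails of the given copresentations), and $E$ is injective so that the relevant category $\text{add}(E)$-cogeneration is preserved under extensions of these tails, one concludes $W$ embeds into $E^{b+d}$. Composing $E^{a+c}\twoheadrightarrow W\hookrightarrow E^{b+d}$ yields the required map $E^{a+c}\rightarrow E^{b+d}$ whose kernel is exactly $Y$, so that $0\rightarrow Y\rightarrow E^{a+c}\rightarrow E^{b+d}$ is exact and $Y\in C\text{-Comod}^E_{\rm fc}$.

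The technically delicate step is the last one: controlling the cokernel $W$ and showing it is again finitely $E$-cogenerated, since cokernels in $C\text{-Comod}$ of maps between copresented objects are not automatically copresented without using injectivity of $E$ carefully. I expect the cleanest route is to avoid computing $W$ explicitly and instead assemble the second map directly from the given $f\colon E^a\rightarrow E^b$ and $g\colon E^c\rightarrow E^d$ together with the lift $Y\rightarrow E^a$, arranging an upper-triangular $2\times 2$ block map $E^{a+c}\rightarrow E^{b+d}$ of the form $\left(\begin{smallmatrix} f & * \\ 0 & g \end{smallmatrix}\right)$ and verifying by a diagram chase that its kernel is precisely the image of $Y$. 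The injectivity of $E$ is exactly what supplies the off-diagonal entry and the lift, and the verification that kernels line up is the main obstacle, though it reduces to a routine application of the snake lemma once the maps are correctly set up.
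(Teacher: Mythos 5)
Your proposal is correct and follows essentially the same route as the paper, which simply invokes the (dual) Horseshoe Lemma applied to the diagram assembled from the two given copresentations $0\to X\to E^a\to E^b$ and $0\to Z\to E^c\to E^d$. The injectivity-supplied lift $Y\to E^a$, the resulting monomorphism $Y\hookrightarrow E^{a+c}$, the block upper-triangular map $\left(\begin{smallmatrix} f & * \\ 0 & g \end{smallmatrix}\right):E^{a+c}\to E^{b+d}$, and the snake-lemma identification of the cokernel are precisely the content of that lemma, so you have just written out in detail what the paper cites.
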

\begin{proof}
This follows easily by the classical Horseshoe Lemma. If $0\rightarrow M'\rightarrow M\rightarrow M''\rightarrow 0$ is a short exact sequence of comodules with $M$ and $M'$ are in $C{\rm-Comod}^E_{\rm fc}$, consider the diagram
$$\xymatrix{
& 0 \ar[d] & 0\ar[d] & 0\ar[d] & \\
0\ar[r] & M' \ar[r]\ar[d] & M \ar[r]\ar@{..>}[d] & M''\ar[r]\ar[d] & 0 \\
0\ar[r] & E^k \ar[r]\ar[d] & E^{k}\oplus E^i \ar[r]\ar@{..>}[d] & E^i \ar[r]\ar[d] & 0 \\
0\ar[r] & E^l \ar[r] & E^{l}\oplus E^j \ar[r] & E^j \ar[r] & 0 \\
}$$
The first and last exact columns exist by hypothesis, and the middle (dotted) column can be completed as an exact sequence (and commutative squares) by the (appropriate version of the) Horseshoe Lemma.
\end{proof}

The next proposition recovers ``one-half" of the above mentioned result in Proposition \ref{p.Simson}. 

\begin{proposition}\label{p.fcwild}
(i) If $C$ is an f-finite coalgebra (in particular, if it is Hom-computable) which is wild, then $C$ is fc-wild.\\
(ii) If $C$ is a Hom-computable coalgebra which is fc-wild, then $C$ is wild.
\end{proposition}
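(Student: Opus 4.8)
The plan is to prove the two implications by genuinely different mechanisms: part (i) by pushing the ambient wildness embedding into the finitely copresented subcategory with the help of the coefficient coalgebra, and part (ii) by transporting an fc-wildness embedding across the localization equivalence of Proposition \ref{p.equivalences} and then invoking the localization principle for wildness.

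For part (i), I would begin with a representation embedding $F:{\rm mod-}W\longrightarrow C{\rm-comod}$ for some wild $W$, which exists since $C$ is wild. Exactly as in the proof of Lemma \ref{l.wildlocal}, the coefficient coalgebra $H=\cf(F(W))$ is finite dimensional and every object $F(N)$ is an $H$-comodule, since $F(N)$ is a quotient of $F(W)^n$ and $\cf$ is monotone on quotients. In particular all socles ${\rm soc}(F(N))$ are supported on the finite set $F_0\subseteq I$ of simples occurring in $H$. I would then put $E=\bigoplus_{i\in F_0}E(i)$, a socle-finite injective. Because $C$ is f-finite, every finite dimensional comodule is finitely copresented; together with the uniform socle bound just obtained, this places each $F(N)$ in the category of finitely copresented comodules, so that $F$ corestricts to a functor ${\rm mod-}W\longrightarrow C{\rm-Comod}^E_{\rm fc}$. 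Since $C{\rm-Comod}^E_{\rm fc}$ is a full subcategory closed under extensions by Lemma \ref{l.extfc}, and $F$ is already a representation embedding into $C{\rm-comod}$, this corestriction remains a representation embedding, which is precisely the statement that $C$ is fc-wild.

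For part (ii), I would unwind the definition of fc-wild: it supplies a socle-finite injective $E$, which I may write as $E=eC$ for a finite idempotent $e\in C^*$, together with a representation embedding $\Phi:{\rm mod-}W\longrightarrow C{\rm-Comod}^E_{\rm fc}$ for some wild $W$. Here the Hom-computable hypothesis does the essential work: it forces $eCe$ to be finite dimensional, so Proposition \ref{p.equivalences}(iii) yields an equivalence $C{\rm-Comod}^E_{\rm fc}\simeq eCe{\rm-comod}$. Composing $\Phi$ with this equivalence produces a representation embedding ${\rm mod-}W\longrightarrow eCe{\rm-comod}$, so the finite dimensional coalgebra $eCe$ is wild. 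Finally I would apply the localization principle for wildness (the proposition preceding Theorem \ref{t.main3}, that $eCe$ wild for some idempotent $e$ forces $C$ wild) to conclude that $C$ is wild.

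I expect the only real obstacle to lie in part (i): namely, guaranteeing that the objects $F(N)$ are copresented inside a \emph{single} socle-finite injective $E$, rather than by injectives whose socles grow with $N$. The coefficient-coalgebra bound controls the socles of the $F(N)$ uniformly, and f-finiteness upgrades ``finite dimensional'' to ``finitely copresented''; the point to verify with care is that these two facts together confine all the copresentations to one fixed $C{\rm-Comod}^E_{\rm fc}$. Part (ii), by contrast, is essentially formal once the equivalence $C{\rm-Comod}^E_{\rm fc}\simeq eCe{\rm-comod}$ is available, since wildness then transports verbatim to the finite dimensional localization and the localization principle carries it back to $C$.
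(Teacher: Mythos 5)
Both halves of your proof hinge on a single claim that you never actually establish: that a representation embedding whose image consists of finitely copresented comodules can be confined to one \emph{fixed} category $C{\rm-Comod}^E_{\rm fc}$. In part (i) your mechanism for this fails. Membership in $C{\rm-Comod}^E_{\rm fc}$ constrains \emph{both} terms of an injective copresentation $0\rightarrow X\rightarrow E^k\rightarrow E^n$, whereas the coefficient coalgebra $H=\cf(F(W))$ bounds only the socle of $F(N)$ itself, hence only the injective hull $E_0$ of $F(N)$. The cokernel $E_0/F(N)$ will in general have socle supported at vertices \emph{outside} $F_0$: already for a simple comodule $S(i)$, the quotient $E(i)/S(i)$ has socle spread over all arrows of the Ext quiver arriving at $i$, and nothing forces their source vertices to lie in $F_0$. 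So with $E=\bigoplus_{i\in F_0}E(i)$ the claimed corestriction ${\rm mod-}W\longrightarrow C{\rm-Comod}^E_{\rm fc}$ is false. (This is repairable: f-finiteness means every vertex has finite in-bound degree, so enlarging $F_0$ by the finitely many in-neighbours of its vertices gives an $E$ that does work. Alternatively, under the definition of fc-wild used in the paper --- a representation embedding into $C{\rm-Comod}_{\rm fc}$, not into a single $C{\rm-Comod}^E_{\rm fc}$ --- part (i) needs none of this: one simply composes $F$ with the full inclusion $C{\rm-comod}\hookrightarrow C{\rm-Comod}_{\rm fc}$, which exists precisely because f-finiteness makes every finite dimensional comodule finitely copresented.)

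In part (ii) the same issue reappears, now disguised as an "unwinding" of the definition: fc-wildness supplies a representation embedding $\Phi:{\rm mod-}W\longrightarrow C{\rm-Comod}_{\rm fc}$ in which, a priori, each object $\Phi(N)$ is copresented by its own socle-finite injective; the definition does not hand you one $E$ serving all of them, and producing such an $E$ is exactly the nontrivial step. The paper fills it as follows: the images $\Phi(S_1),\dots,\Phi(S_n)$ of the finitely many simple $W$-modules lie in a common $C{\rm-Comod}^E_{\rm fc}$, and since $C{\rm-Comod}^E_{\rm fc}$ is closed under extensions in $C{\rm-Comod}$ (Lemma \ref{l.extfc}, via the Horseshoe Lemma), exactness of $\Phi$ and induction on composition length give $\Phi({\rm mod-}W)\subseteq C{\rm-Comod}^E_{\rm fc}$. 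From that point on your argument coincides with the paper's: Hom-computability makes $eCe$ finite dimensional, Proposition \ref{p.equivalences}(iii) gives the equivalence $C{\rm-Comod}^E_{\rm fc}\simeq eCe{\rm-comod}$, hence $eCe$ is wild, and the localization principle (the proposition preceding Theorem \ref{t.main3}) carries wildness back to $C$. So the architecture of your (ii) is correct, but the reduction to a single $E$ --- the one genuinely non-formal step, and the same step that breaks your (i) --- is missing.
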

\begin{proof}
(i) Obviously, a representation embedding ${\rm mod-}W\longrightarrow C{\rm-comod}$ with $W$ finite dimensional wild can be composed with the full embedding $C{\rm-comod}\hookrightarrow C{\rm-Comod}_{\rm fc}$ (which is true since $C$ is f-finite) to give a representation embedding ${\rm mod-}W\longrightarrow C{\rm-comod}_{\rm fc}$.\\
(ii) Let $F:{\rm mod-}W\longrightarrow C{\rm-Comod}_{\rm fc}$ with $W$-wild be a representation embedding. Let $S_1,\dots,S_n$ be representatives for the isomorphism types of simple (right) $W$-modules. Each $F(S_i)$ is finitely copresented, and since there are finitely many, there is some socle-finite injective such that $F(S_i)\in C{\rm-Comod}^E_{\rm fc}$. The previous lemma applied inductively shows that $F({\rm mod-}W)\subset C{\rm-Comod}^E_{\rm fc}$; but $C{\rm-Comod}^E_{\rm fc}$ is equivalent to $eCe{\rm-comod}$ by Proposition \ref{p.equivalences}(iii) for the corresponding finite idempotent $e$. This shows that $eCe$ is wild, and so $C$ is wild too by the results of the previous section. 
\end{proof}

\subsection{Fc-tameness}

We recall \cite{si11} that given a finitely copresented comodule $M$ and a minimal injective copresentation $0\rightarrow M\rightarrow E_0\rightarrow E_1$, then the vector $\cdn(M)=({\rm Soc}(E_0),{\rm Soc}(E_1))\in \ZZ^{(I)}\times \ZZ^{(I)}$ is called the coordinate vector of $M$. Fc-tameness is defined in terms of this: $C$ is fc-tame if finitely copresented comodules of $\cdn(M)=({\mathbf c}_0,{\mathbf c}_1)=v$ can be almost parametrized in the appropriate sense (we refer to \cite[Definition 2.2]{si11}). Namely, $C$ is fc-tame if for every bipartite vector $v=({\mathbf c}_0,{\mathbf c}_1)\in \ZZ^{(I)}\times \ZZ^{(I)}$, there is a {\it finitely copresented almost parametrizing family} of $C$-$\KK[T]$-bicomodules $L_i$ which are {\it finitely copresented bicomodules}, such that all but finitely many finitely copresented indecomposable $C$-comodules $M$ with $\cdn(M)=v$ are of the form $M\cong L_i\otimes_{\KK[T]} \KK[T]/(t-\lambda)$, $\lambda\in \KK$. Here, a finitely copresented bicomodule $L$ means a bicomodule for which there is an exact  $C$-$\KK[T]$-bicomodule sequence $0\rightarrow L\rightarrow E\otimes \KK[T]\rightarrow E'\otimes \KK[T]$ for some quasifinite injectives $E,E'$. 

We note that the condition can be formulated equivalently as follows: for all vectors $v\in \ZZ^{(I)}\times \ZZ^{(I)}$, there are $L_1,\dots,L_t$ which form a finitely copresented almost parametrizing family for all indecomposable comodules $M$ with $\cdn(M)\leq v$ (one needs only put all the appropriate families together). Consider now the localization functor $T=(-)e$, and assume $eCe$ is finite dimensional. Because of the equivalence of categories given by Proposition \ref{p.equivalences} (iii), it is not difficult to see that the family $L_1e,\dots,L_te$ becomes an almost parametrizing family for all $eCe$-comodules of $N$ coordinate vector $\cdn(N)\leq v_e$, where $v_e$ is obtained from $v=({\mathbf c}_0,{\mathbf c}_1)$ by deleting, in each of ${\mathbf c}_0$ and ${\mathbf c}_1$, all entries which correspond to simples $S$ for which $Se=0$ (hence, only the indices corresponding to simples remaining after localization are kept). Also, $L_ie$ remain finitely copresented as $eCe$-$\KK[D]$-bicomodules: an exact sequence $0\rightarrow L_i\rightarrow E^k\otimes \KK[T]\rightarrow E^l\otimes \KK[T]$ for $E=eC$ yields an exact sequence $0\rightarrow L_ie\rightarrow (eCe)^k\otimes \KK[T]\rightarrow (eCe)^l\otimes \KK[T]$. In particular, $L_ie$ are free as $\KK[T]$-modules. Finally, one can notice that given a dimension vector $\underline{d}$ over $eCe$, there is a coordonate vector $w=v_e$ for which all $eCe$-comodules $N$ of dimension vector $\dv(N)= \underline{d}$ have $\cdn(N)\leq w$ (simply because each such $eCe$-comodule $N$ is part of an exact sequence $0\rightarrow N\rightarrow (eCe)^n\rightarrow (eCe)^{n\dim(eCe)}$, where $n=|\underline{d}|=\dim(N)$). Hence, in particular, such indecomposable comodules admit almost parametrizing families (which will be subfamilies of the bicomodules $L_ie$, which are free over $\KK[T]$). 

In effect, this recovers the following, which is a result of \cite{si11}. 

\begin{proposition}
If $C$ is Hom-computable and fc-tame, then $eCe$ is an fc-tame, and hence a tame coalgebra (since $eCe$ is finite dimensional).
\end{proposition}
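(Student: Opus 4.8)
The plan is to transport the almost parametrizing families for $C$ through the localization functor $T=(-)e$ by means of the equivalence of categories furnished by Proposition \ref{p.equivalences}(iii), and then to observe that over the finite dimensional coalgebra $eCe$ the fc-tame condition collapses to ordinary tameness. First I would record that the Hom-computable hypothesis guarantees $eCe$ is finite dimensional for every finite idempotent $e$, so that the functor $S$ restricts to an (exact) equivalence $eCe{\rm-comod}\simeq C{\rm-Comod}^E_{\rm fc}$ with $E=eC$. I would then invoke the reformulation of fc-tameness noted just above: for each bipartite vector $v\in\ZZ^{(I)}\times\ZZ^{(I)}$ there is a single finitely copresented almost parametrizing family $L_1,\dots,L_t$ of $C$-$\KK[T]$-bicomodules covering all indecomposable finitely copresented $C$-comodules $M$ with $\cdn(M)\leq v$.

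Next I would apply $(-)e$ to each $L_i$. Applying the exact functor $T$ to the defining sequence $0\rightarrow L_i\rightarrow E^k\otimes\KK[T]\rightarrow E^l\otimes\KK[T]$ produces $0\rightarrow L_ie\rightarrow (eCe)^k\otimes\KK[T]\rightarrow (eCe)^l\otimes\KK[T]$, since $Ee=eCe$; this exhibits $L_ie$ as a finitely copresented $eCe$-$\KK[T]$-bicomodule. The crucial gain from the finite dimensionality of $eCe$ is that $(eCe)^k\otimes\KK[T]$ is now a free $\KK[T]$-module of finite rank, so its $\KK[T]$-submodule $L_ie$ is finitely generated free over the PID $\KK[T]$, which is exactly the hypothesis demanded in the ordinary (non-fc) definition of tameness. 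Using that $S$ is an equivalence compatible with $-\otimes_{\KK[T]}\KK[T]/(t-\lambda)$, I would then verify that the $L_ie$ almost parametrize the $eCe$-comodules $N$ with $\cdn(N)\leq v_e$, where $v_e$ is obtained from $v$ by deleting the entries indexed by simples killed by $e$.

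Finally, I would pass from coordinate vectors to dimension vectors: every $eCe$-comodule $N$ with $\dv(N)=\underline{d}$ fits in an exact sequence $0\rightarrow N\rightarrow (eCe)^{|\underline{d}|}\rightarrow (eCe)^{|\underline{d}|\dim(eCe)}$, hence has $\cdn(N)$ bounded by an explicit $w=w(\underline{d})$; the family attached to $w$ (which is free over $\KK[T]$ by the previous step) then almost parametrizes all indecomposables of dimension vector $\underline{d}$, so $eCe$ is tame. The main obstacle I anticipate is the bookkeeping in the two reductions, namely checking that the exact equivalence $S$ sends minimal injective copresentations to minimal injective copresentations so that the coordinate vector really transforms as $v\mapsto v_e$, and that bounding a dimension vector by a coordinate vector introduces no spurious or missing families. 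This is organizational rather than conceptual, since the decisive point, the finite freeness over $\KK[T]$, is handed to us directly by the finite dimensionality of $eCe$.
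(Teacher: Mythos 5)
Your proposal follows essentially the same route as the paper: the same reformulation of fc-tameness via $\cdn(M)\leq v$, the same transport of the families $L_i$ through the localization functor $T=(-)e$ using the equivalence of Proposition \ref{p.equivalences}(iii), the same observation that the localized sequences $0\rightarrow L_ie\rightarrow (eCe)^k\otimes\KK[T]\rightarrow (eCe)^l\otimes\KK[T]$ yield freeness over $\KK[T]$, and the same bounding of coordinate vectors of $eCe$-comodules of a fixed dimension vector via the copresentation $0\rightarrow N\rightarrow (eCe)^n\rightarrow (eCe)^{n\dim(eCe)}$. The argument is correct, and the bookkeeping concerns you flag (minimality of copresentations under $S$, the passage $v\mapsto v_e$) are exactly the points the paper itself leaves as "not difficult to see."
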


To go in the opposite direction, when $eCe$ is finite dimensional, one needs to use the left adjoint $H$ of the functor $T$, which is one of the important technical key facts in \cite{si11}. We note that in this case, $H(N)=N\otimes_{eC^*e}eC^*$ for any left $eCe$-comodule $N$. This is the case since it can be proved without difficulty that $N\otimes_{eC^*e}eC^*$ is rational as $C^*$-module; then, the two adjoints $H$ and $(-)\otimes_{eC^*e}eC^*$ of the functor $T$ on $eCe{\rm-Comod}$ must be naturally isomorphic. In fact, one essentially needs to observe that for a finite dimensional coalgebra $D$, fc-tame and tame are equivalent notions. 

Hence, in this line of thought, one can obtain that a Hom-computable $C$ is fc-tame if and only if it is l-tame, equivalently, it is tame; and since we also have that such a coalgebra $C$ is wild if and only if $C$ is fc-wild (Proposition \ref{p.fcwild}), using our direct proof of the tame/wild dichotomy for coalgebras, one obtains an alternative approach to the result of \cite{si11} that for a Hom-computable coalgebra $C$, tame is equivalent to fc-tame, wild is equivalent to fc-wild, any such coalgebra is either tame or wild but not both. 

We leave further details of such an alternative approach to fc-tame/fc-wild to the reader; nevertheless, we note that, as far as we can tell, some of the fine and more technical details required for the last part and present in \cite{si11} cannot be avoided.

\section{Questions}

\subsection{Connections to the the Brauer-Thrall 3 conjecture}\label{s.bt3}

We note here possible connections of the various embeddings present here between categories of locally finite modules to a conjecture due to D. Simson. In \cite{Si12}, the following is posed as a question.

\begin{conjecture}
If $A$ is a finite dimensional algebra which is not of finite type, then for any (infinite) cardinality $\lambda$, there is an indecomposable module $M$ of dimension $\dim(M)\geq \lambda$.
\end{conjecture}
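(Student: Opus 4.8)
The plan is to combine the tame--wild dichotomy proved in Section~\ref{s.main} with the dimension-preserving representation embeddings of Section~\ref{s.we}. First I would split the problem along the dichotomy: a finite dimensional $A$ that is not of finite type is either wild or tame of infinite type, and these two cases call for entirely different techniques. For the wild case one can hope to transport a single ``universal'' family of large indecomposables along the embeddings; for the tame case the embedding machinery is unavailable and a direct construction is forced.

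For the wild case, suppose first that $A$ is fully wild. The representation embedding $U=(-)\otimes_{\KK\langle x_n|n\in\NN\rangle}M:{\rm lf-}\KK\langle x_n|n\in\NN\rangle\longrightarrow {\rm Mod-}A$ of Section~\ref{s.we} preserves indecomposables, and being the composite of the functor $N\mapsto N^{(\NN)}$ of Proposition~\ref{p.NN} with tensoring by finitely generated bimodules, it satisfies $\dim_\KK U(N)=\dim_\KK N$ for every infinite dimensional $N$. Hence, to produce an indecomposable $A$-module of dimension $\geq\lambda$ it suffices to produce a \emph{locally finite} indecomposable $\KK\langle x_n|n\in\NN\rangle$-module of that dimension. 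Moreover, by Theorem~\ref{t.wildE} any two fully wild coalgebras of at most countable dimension embed into one another dimension-preservingly, so the wild case of the conjecture is \emph{uniform}: it holds for one fully wild algebra if and only if for all of them. Thus the wild case reduces to a single question about the universal object $\KK\langle x_n|n\in\NN\rangle$ (equivalently, the coalgebra $\KK\langle x_n|n\in\NN\rangle^0$): does it carry locally finite indecomposable (co)modules of arbitrarily large dimension?

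The main obstacle in the wild case is exactly this reduced question. Unlike the situation for finite dimensional modules, requiring the source to be \emph{locally finite} is genuinely restrictive: over $\KK[X]$, for example, the indecomposable torsion (hence locally finite) modules have dimension bounded by $|\KK|+\aleph_0$, so any large locally finite indecomposable must exploit the noncommutativity and the infinitely many generators. I would attempt a ``rigid tree'' construction, taking a vector space $V$ of dimension $\lambda$ with a basis indexed by a combinatorial object (a rigid tree, or a family of almost disjoint sets) and letting the $x_n$ act by finite-support partial shifts encoding the incidences, arranged so that $V$ is a directed union of finite dimensional subcomodules while $\End(V)$ is local. Proving locality of the endomorphism ring for large $\lambda$ is where infinite combinatorics enters, and this is the crux. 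For a wild $A$ that is \emph{not} fully wild, $U$ is unavailable, and one would instead need to lift a representation embedding ${\rm mod-}\Gamma_3(\KK)\longrightarrow A{\rm-mod}$ to infinite dimensional modules while controlling dimension; this does not follow formally from the finite dimensional theory (Lemma~\ref{l.weakwild}) and would require separate work.

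Finally, the tame infinite type case is the hardest and is not reached by the wild embeddings at all. Here I would invoke the structure theory of tame algebras to reduce to the homogeneous (generic) part, which is governed by $\KK[X]$-module theory, and then produce the required large indecomposables not from torsion (locally finite) modules but from the classical constructions of indecomposable \emph{torsion-free} modules of arbitrarily large cardinality over a Dedekind domain, obtained by realizing prescribed endomorphism rings. Transporting such non-locally-finite modules back along the tame structure functors, while preserving indecomposability and dimension, is the principal difficulty. I expect this tame case, together with the locally finite rigid construction over the free algebra, to be the two decisive obstacles, and it is plausible that a complete resolution requires delicate set-theoretic or infinite-combinatorial input rather than a formal consequence of the embeddings assembled here.
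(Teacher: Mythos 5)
Before anything else: the statement you set out to prove is not proved in this paper — it is quoted in Section \ref{s.bt3} as an open conjecture of Simson (\cite{Si12}), a ``Brauer--Thrall 3'' problem. The paper only surveys partial results (Ringel's theorems \cite{Ri1,Ri2}, which give it for tame hereditary algebras and, via a dimension-preserving embedding, for any algebra whose Ext quiver contains the Kronecker quiver $\Gamma_2$; and the fact from \cite{Si12,Si13} that fully wild algebras satisfy it), and then poses questions whose positive answers would extend it to all wild algebras. So there is no paper proof to compare against; the only issue is whether your attempt closes the conjecture, and it does not: by your own account each branch of your case division ends in an unproven step — the ``rigid tree'' construction of large locally finite indecomposables over $\KK\langle x_n\mid n\in\NN\rangle$, the lifting of representation embeddings for wild but not fully wild algebras, and the whole tame infinite-type case. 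An argument whose decisive steps are labelled ``the crux'' and ``the principal difficulty'' is a research plan, not a proof.

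Two concrete remarks. First, the obstacle you treat as the crux of the fully wild case is not actually an obstacle: the Kronecker algebra $\Lambda=\KK\Gamma_2$ is finite dimensional, so \emph{every} $\Lambda$-module is locally finite, i.e. ${\rm Mod-}\Lambda=\Lambda^{*}{\rm-Comod}$ for the finite (hence countable) dimensional coalgebra $\Lambda^{*}$; the paper's embedding of any countable dimensional coalgebra into $\KK_\NN$ (Section \ref{s.we}) then realizes ${\rm Mod-}\Lambda$ as a full exact subcategory of ${\rm lf-}\KK\langle x_n\mid n\in\NN\rangle$ by corestriction, which leaves underlying vector spaces unchanged, and Ringel \cite{Ri2} provides indecomposable Kronecker modules of arbitrarily large dimension. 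Composing with your dimension-preserving $U$ recovers exactly the already known statement that fully wild algebras satisfy the conjecture — which is precisely where the paper also stops; no set-theoretic or combinatorial novelty is needed there, and your worry based on $\KK[X]$ (where locally finite indecomposables are indeed at most countable dimensional) dissolves once noncommutativity is exploited this way. Second, the genuine gaps are the two cases you defer: for wild algebras that are not fully wild there is no known dimension-controlled embedding at the level of big module categories — this is exactly why the paper asks whether an exact functor $C{\rm-Comod}\rightarrow D{\rm-Comod}$ that restricts to a representation embedding on finite dimensional objects must itself be a representation embedding — and for tame algebras of infinite type with Schurian Ext quiver, your appeal to ``tame structure functors'' and torsion-free modules over Dedekind domains is a hope, not a construction; Ringel's results cover the hereditary and non-Schurian cases, but neither your text nor this paper bridges the general case. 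In short, your proposal correctly identifies the open difficulties but resolves none of them, and the one difficulty it proposes to attack head-on is already settled by known results.
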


As the classical two Brauer-Thrall conjectures asked whether algebras which are not of finite type have arbitrarily large finite dimensional indecomposable representations, we may call the above a Brauer-Thrall 3 (BT3) question. We remark that Ringel \cite{Ri1,Ri2} proved that the BT3 statement on existence of arbitrarily large indecomposables holds for the Kronecker quiver with two arrows, and for tame hereditary algebras. This means that it holds also for any algebra $A$ whose Ext quiver $Q$ is not Schurian, in the sense that $Q$ contains the Kroneker quiver $\Gamma_2$. One can see this because in this case the quiver coalgebra of $\Gamma_2$ embeds in $C=A^*$, and so there is an exact and full representation embedding of modules over $\Gamma_2$ into $C$-comodules (equivalently, $A$-modules); this embedding is then seen to ``preserve dimension", as we recall below. Hence, Ringel's work shows that the BT3 statement works for all such algebras.

The above conjecture is also proved to hold for several other classes of algebras in \cite{Si12}, such as fully wild algebras; this is a result of the fact that such algebras are Wild (this statement follows also from the more general embedding of Theorem \ref{t.wildE}). We recall here this method: if $B$ is fully wild, let $W$ be a wild algebra and a full faithful exact embedding $G:{\rm mod-}W\rightarrow {\rm mod-}B$, where $G$ can be assumed to be of the form $G(X)=P\otimes_W X$ for $P$ finitely generated projective over $W$. This $P$ is in fact finite dimensional and one can easily argue that $G$ preserves (infinite) dimension, and if indecomposable $W$-modules of arbitrary dimension exist, then the statement will hold for $B$. 

This can be done also by an argument independent of the finite dimensionality of $P$, which can potentially be used in other situations: if we assume that the indecomposable $B$-modules have bounded cardinality, then their isomorphism classes form a set $I$; since $W$ has modules of arbitrarily large cardinality, we can pick a set of non-isomorphic $W$ modules $J$ of cardinality larger than that of $I$. But since $G$ respects isomorphisms, the map $G:J\rightarrow I$, $X\longmapsto G(X)$ has to be injective, and so the cardinality of $I$ is at least as large as that of $J$, a contradiction. 

By the results in \cite[Theorem 3.1 and Corollary 3.2]{Si12}, there are finite dimensional wild algebras which satisfy the above conjecture (namely, the incidence algebra of any finite poset of wild representation type whose Tits form is not positive definite on vectors with entries non-negative integers), and hence such algebras $W$ can be used to show other algebras satisfy the conjecture provided suitable embeddings between large module categories can be found. In view of Lemma \ref{l.fininf} and the considerations immediately following it, and of Theorem \ref{t.wildE}, we ask the following question; if has has a positive answer, it would imply that any wild algebra is Wild, and hence, the above conjecture would hold for all wild algebras. We refer also to \cite{Sh} for connections between various other variations of the notion of wild.

\begin{question}
Let $F:C{\rm-Comod}\rightarrow D{\rm-Comod}$ be an exact functor which restricts and corestricts to a representation embedding $F_\vert:C{\rm-comod}\rightarrow D{\rm-comod}$. Does it follow that $F:C{\rm-Comod}\rightarrow D{\rm-Comod}$ is also a representation embedding?
\end{question}

Of course, to answer the above conjecture in the positive, the full positive answer to this question is not needed, but one needs to only show that {\it some} large indecomposables get preserved by such an embedding. 

In view of the embeddings between locally finite modules over countably generated algebras into that of modules over a finite dimensional fully wild algebra $A$, it seems natural to ask whether one can embed just ``any category" in ${\rm Mod-}A$.

\begin{question}
Let $C$ be an arbitrary coalgebra (or $C=\KK\langle x_i |i\in I\rangle^0$ the finite dual of the algebra of polynomials in some set of variables $I$). Can the category $C{\rm-Comod}$ be representation embedded into ${\rm Mod-}W$ for any fully wild algebra $W$?
\end{question}

\subsection{Further questions}

We end by listing a few other question which seem to naturally arise from this analisys. We formulate these here.

\begin{question}
Does the fc-tame/fc-wild dichotomy hold for arbitrary coalgebras? Does it at least hold for any interesting class properly containing Hom-computable coalgebras, such as f-finite, f-quasifinite, or strictly-quasifinite coalgebras? 
\end{question}

\begin{question}
Is ``tame" equivalent to ``fc-tame" in general for arbitrary coalgebras? Are they equivalent at least for f-finite, f-quasifinite, or for strictly-quasifinite coalgebras?
\end{question}

\begin{question}
The same question for wild: is ``wild" equivalent to ``fc-wild" for arbitrary coalgebras? Are they equivalent at least for f-finite, f-quasifinite, or for strictly-quasifinite coalgebras?
\end{question}

\begin{question}
Find general classes of coalgebras where the above stated questions have a positive answer.
\end{question}


In view of the results of Section \ref{s.we} - Theorem \ref{t.main2}, the following remains open.

\begin{question}
If $C$ is a locally finite (pointed) coalgebra, is there a representation embedding $C{\rm-comod}\longrightarrow {\rm mod-}\KK\langle z,w \rangle$? Note that it is enough to prove such an embedding exist for the quiver coalgebra of a locally finite quiver $Q$ (i.e. one for which only finitely many arrows exist between any two vertices). \\
Does such an embedding exist at least for the case when the vertices of $Q$ have finite (incoming and outgoing) degree? 
\end{question}


One should note at this point that, by the results of \cite[Sections 3,4]{HR}, a finitely generated algebra is always almost Noetherian (a ``Hilbert's basis theorem"), so if $A$ is such an algebra, its finite dual $A^0$ is left and right strongly reflexive, and hence, it is reflexive and locally finite (\cite{HR}). Thus, the coalgebra $\KK\langle x_1,\dots,x_n\rangle^0$ (the cofree coalgebra on a finite $n$-dimensional vector space; \cite{DNR}, see also \cite{AI}) is locally finite. Hence, since ${\rm mod-}\KK\langle z,w\rangle=\KK\langle z,w\rangle^0{\rm-comod}$, the above question can be rephrased and generalized as 

\begin{question}
(i) If $\Gamma,Q$ are locally finite quivers, when is there a representation embedding ${\bf nrep}_\Gamma=\KK \Gamma{\rm-comod}$ into ${\bf nrep}_Q=\KK Q{\rm-comod}$?  When is there such an embedding into ${\rm mod-}\KK\langle x_1,\dots,x_n \rangle$?\\
(ii) When is there a representation embedding ${\rm rep}_\Gamma={\rm mod-}\KK[\Gamma]$ into ${\rm rep}_Q={\rm mod-}\KK[Q]$? (here $\KK[\Gamma]$ and $\KK[Q]$ denote the path algebras).
\end{question}

In effect, one can use these embeddings to create a partial quasi-order between quivers ($\Gamma\preceq Q$ if ${\rm rep}_\Gamma$ representation embeds into ${\rm rep}_Q$), and an equivalence relation ($\Gamma$ and $Q$ are equivalent if $\Gamma\preceq Q$ and $Q\preceq \Gamma$), and one can talk about classifying quivers according to this equivalence (``embedding type").  We give below an answer to the last question in a particularly interesting case, the ``bounded" case. As noted, any pointed (i.e. basic Schurian) coalgebra $C$ can be embedded in the path coalgebra $\KK Q$ of its Ext quiver $Q$, and furthermore, $\KK Q{\rm-comod}$ embedds as a full abelian subcategory in ${\rm mod-}\KK[Q]$ (here $\KK[Q]$ is the path algebra of $Q$), which is the category of finite dimensional representations of $Q$ (see also \cite{din}). 

Let $C$ be a pointed coalgebra with Ext quiver $Q$ and assume that $\dim\Ext^{C,1}(L,T)$ when $L,T$ range over all simple left $C$-comodules, is bounded, so there is an $n_0$ such that $\dim\Ext^{C,1}(L,T)\leq n_0$ for all simple left comodules $L=S_i,T=S_j$. Equivalently, the number of arrows $n(i,j)$ between two vertices $i,j$ of $Q$ is bounded. This is equivalent to asking that for every simple left comodule $S_i$, the comodule $E(S_i)/S_i$ can be embedded into $C^{n_0}$. Equivalently, if $e_i$ is the corresponding idempotent of $C^*$ so that $e_i\vert_{E(S_i)}=\varepsilon$, then the (unique) maximal ideal $S_i^\perp$ of $E(S_i)^*$ is $n_0$-generated. In particular, it follows that simple comodules are finitely $C$-copresented, and by an inductive application of the Horseshoe Lemma (as in Lemma \ref{l.extfc}), it follows that all finite dimensional $C$-comodules are finitely $C$-copresented; this is close to the notion of $\Ff$-Noetherian coalgebra \cite{CNO} (a coalgebra $C$ is left $\Ff$-Noetherian if every closed cofinite ideal of $C^*$ is finitely generated), and f-quasifinite coalgebra. We note that such or closely related conditions were considered by many authors (see \cite{I2} and \cite[Theorem 4.8]{I2}, \cite{CNO, HR, T1, T2, Rad} and references therein). We have the following.

\begin{proposition}
Let $Q$ be a quiver with countably many vertices, such that the set $(n(i,j))_{i,j\in Q_0}$ is bounded. Then for every infinite field $\KK$, there is a representation embedding ${\rm mod-}\KK[Q]$ into ${\rm mod-}\KK\langle z,w\rangle$. In particular, if $C$ is a countable dimensional coalgebra whose Ext quiver has this property (i.e. the set $\{\dim(\Ext^{1,C}(S_i,S_j))\,|\, i,j\in I\}$ is bounded by some number $n$), then $C{\rm-comod}$ representation embeds in ${\rm mod-}\KK\langle z,w\rangle$ and hence, in ${\rm mod-}W$ for any wild algebra $W$.
\end{proposition}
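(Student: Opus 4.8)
The plan is to factor the sought embedding as a composite of two representation embeddings, the first collapsing the (possibly infinite) quiver onto a free algebra in finitely many generators, the second being the classical reduction of the number of free generators down to two. Let $n$ be a bound with $n(i,j)\le n$ for all $i,j\in Q_0$. Since $\KK$ is infinite and $Q_0$ is countable, I first fix an injective assignment $i\mapsto\lambda_i\in\KK$ of pairwise distinct scalars to the vertices, and for each ordered pair $(i,j)$ an enumeration of the arrows from $i$ to $j$, so that every arrow carries a label $\alpha_{ijk}$ with $1\le k\le n(i,j)\le n$. First I would build an exact functor $F:{\rm mod-}\KK[Q]\longrightarrow {\rm mod-}\KK\langle y_0,\dots,y_n\rangle$: a finite dimensional representation $M=(M_i;M_\alpha)$ is sent to the finite dimensional space $V=\bigoplus_i M_i$, on which $y_0$ acts as the diagonal operator $D=\bigoplus_i\lambda_i\,\mathrm{id}_{M_i}$ and each $y_k$ ($1\le k\le n$) acts as the operator $A_k$ with $A_k|_{M_i}=\sum_j M_{\alpha_{ijk}}:M_i\to\bigoplus_j M_j$, the sum of the $k$-th arrow maps leaving $i$; on morphisms $F$ is the evident grading-preserving assignment.

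This $F$ is visibly additive, exact and faithful. The essential point---and the only place where the two hypotheses are used---is fullness. A morphism $g:F(M)\to F(N)$ commutes with $D$; because the $\lambda_i$ are pairwise distinct, $g$ must carry the $\lambda_i$-eigenspace $M_i$ of $F(M)$ into the $\lambda_i$-eigenspace $N_i$ of $F(N)$, so $g=\bigoplus_i g_i$ with $g_i:M_i\to N_i$. Commutation of $g$ with each $A_k$ then unwinds componentwise into the relations $g_j M_{\alpha_{ijk}}=N_{\alpha_{ijk}}g_i$ for all $i,j,k$, which is exactly the condition that $(g_i)_i$ be a morphism of $Q$-representations. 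Hence $F$ is full, faithful and exact, and therefore a representation embedding. For the second step I would simply invoke the construction labelled (1) in Section \ref{s.we}, applied to the \emph{finitely generated} algebra $B=\KK\langle y_0,\dots,y_n\rangle$: it yields a full, faithful, exact functor $G:{\rm Mod-}B\longrightarrow {\rm Mod-}\KK\langle z,w\rangle$ into the two-variable free algebra that preserves finite dimensionality, so its restriction to ${\rm mod-}B$ is again a representation embedding. Since representation embeddings compose, $G\circ F$ is the desired representation embedding of ${\rm mod-}\KK[Q]$ into ${\rm mod-}\KK\langle z,w\rangle$.

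For the ``in particular'' clause, let $C$ be countable dimensional with Ext quiver $Q$ satisfying $\dim\Ext^{1,C}(S_i,S_j)\le n$. Countability of $\dim_\KK C$ forces countably many simple comodules, hence countably many vertices, and the hypothesis says precisely that the arrow multiplicities $n(i,j)=\dim\Ext^{1,C}(S_i,S_j)$ are bounded by $n$; so $Q$ falls under the first part. As recalled just before the statement, the inclusion $C\hookrightarrow \KK Q$ into the path coalgebra of its Ext quiver induces a full faithful exact corestriction $C{\rm-comod}\hookrightarrow \KK Q{\rm-comod}$, and $\KK Q{\rm-comod}$ sits as a full abelian subcategory of ${\rm mod-}\KK[Q]$; composing these inclusions with $G\circ F$ produces a representation embedding $C{\rm-comod}\longrightarrow {\rm mod-}\KK\langle z,w\rangle$. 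Finally, for any wild algebra $W$ the definition of wildness provides a representation embedding ${\rm mod-}\Gamma_3(\KK)\longrightarrow {\rm mod-}W$; precomposing it with the representation embedding ${\rm mod-}\KK\langle z,w\rangle\longrightarrow {\rm mod-}\Gamma_3(\KK)$ furnished by construction (2) of Section \ref{s.we} (restricted to finite dimensions) gives the asserted embedding into ${\rm mod-}W$.

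The only genuinely new ingredient is the functor $F$ and the verification of its fullness; all remaining steps are assembled from results already established in Section \ref{s.we}. I expect the main obstacle to be precisely this fullness check: one must make a single global choice of pairwise distinct scalars $\lambda_i$---legitimate exactly because $\KK$ is infinite and $Q_0$ countable---that separates the vertices uniformly for all representations at once, and one must see that the boundedness of the $n(i,j)$ is what keeps the number of arrow operators $A_k$ finite, so that the intermediate algebra is a finitely generated free algebra and the reduction to two variables applies. Absent the bound, $F$ would land in a free algebra on infinitely many generators, and the elementary reduction to $\KK\langle z,w\rangle$ of the second step would no longer be available in this form.
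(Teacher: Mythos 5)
Your proposal is correct and follows essentially the same route as the paper's own proof: you encode a representation of $Q$ on the space $\bigoplus_i M_i$ with one generator acting diagonally by pairwise distinct scalars $\lambda_i$ (to force morphisms to preserve the vertex grading) and $n$ further generators acting as the sums of the $k$-th arrow maps, exactly as the paper does with its elements $\sum_b [a,b]_i$, and then you invoke the same classical reduction from $\KK\langle y_0,\dots,y_n\rangle$ to $\KK\langle z,w\rangle$. The only differences are notational (the paper works with right modules and formal sums in $(\KK Q)^*$), so there is nothing to add.
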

\begin{proof}
Note that  it is enough to find a representation embedding from ${\rm mod-}\KK[Q]$ into ${\rm mod-}\KK\langle x_0,x_1,x_2,\dots,x_n\rangle$, since the latter can be representation embedded into ${\rm mod-}\KK\langle z,w\rangle$. For each pair of vertices $(a,b)$, enumerate the arrows $[a,b]_1, [a,b]_2, \dots, [a,b]_n$ and regard them as elements of $\KK[Q]$; in the case when there are only $k<n$ such arrows, we define the last $n-k$ such elements $[a,b]_i$ to equal $0\in\KK[Q]$. If $M=\bigoplus\limits_{a\in Q_0}M_a$ is a finite dimensional right $\KK[Q]$-module (so $M_a=0$ for all but finitely many vertices $a\in Q_0$), we let each $x_i$ to act on an $m_a\in M_a$ as the element $\sum\limits_{b\in Q_0}{[a,b]_i}$ (from the right). This is a formal sum (it can be regarded as an element of the algebra $(\KK Q)^*$, dual to the coalgebra $\KK Q$); however, the action is well defined, because $M$ is finite dimensional. Explicitly, $m_a\cdot x_i=(m_a \cdot [a,b]_i)_{b\in Q_0}\in\bigoplus\limits_{b\in Q_b}M_b$, and only finitely many of these components are non-zero. We {\it fix} a family of pairwise distinct elements $(\lambda_a)_{a\in Q_0}\subseteq \KK$. 
Finally, we let $x_0$ act as $m_a\cdot x_0=\lambda_a m_a$. This defines a right $\KK\langle x_0,x_1\dots,x_n\rangle$-module structure on $M$.\\
It is not difficult to see that if $\varphi=\bigoplus\limits_{a\in Q_0}\varphi_a:\bigoplus\limits_{a\in Q_0}M_a\rightarrow \bigoplus\limits_{a\in Q_0}N_a$ is a morphism of right $\KK[Q]$-modules, then $\varphi$  is a morphism of $\KK\langle x_0,x_1\dots,x_n\rangle$-modules too. Let us call the functor defined this way $F$. Obviously, $F$ is faithful. We note it is also full. If $\psi:M\rightarrow N$ is a morphism of $\KK\langle x_0,x_1\dots,x_n\rangle$-modules, proceed as in Proposition \ref{p.NN}, using the action of $x_0$ (where $M_a$ are all eigenvectors for different eigenvalues) to get that $\psi=\bigoplus\limits_{a\in Q_0}\psi_a$, with $\psi_a:M_a\rightarrow N_a$. By the condition $\psi(m_a\cdot x_i)=\psi(m_a)\cdot x_i$ for $m_a\in M_a$, one obtains equivalently $\psi_b(m_a\cdot [a,b]_i)=\psi_a(m_a)\cdot [a,b]_i$, and so $\psi$ is a morphism of representations. \\
It is easy to see that $F$ is also exact, and since $F$ is linear, full and faithful functor, it is such a desired representation embedding. 
\end{proof}

We note that the above proof can be used to find a concrete representation embedding from the category $C{\rm-comod}$ into ${\rm mod-}\KK\langle x_n|n\geq 0\rangle$ for a coalgebra of countable dimension. One proceeds as above with the Ext quiver $Q$ of $C$ which has at most countably many arrows between any two vertices. Of course, such ebmeddings are known to exist already by Theorem \ref{t.main2}, since the dual coalgebra $D=\KK\langle\NN\rangle^0$ of $\KK\langle\NN\rangle=\KK\langle x_n|n\geq 0\rangle$ is not locally finite, since its Ext quiver contains the quiver $Q_\infty$ with one vertex and countably many loops. We end by mentioning the following very natural (and quite interesting) question regarding representations of free algebras (and cofree coalgebras); we are not aware of it having been considered before.

\begin{question}
Determine the Ext quiver of the cofree coalgebra on an $n$ dimensional vector space $\KK\langle x_1,\dots,x_n\rangle^0$; equivalently, the Ext quiver of the category ${\rm mod-}\KK\langle x_1,\dots,x_n\rangle$, or the category ${\bf rep}_Q$ of the $n$-loop quiver.
\end{question}

It is to be expected that the category of locally finite modules over the non-commutative polynomial algebra is hereditary, in which case the answer to the previous question in effect would characterize the category of finite dimensional modules over the polynomial algebra, up to Morita equivalence. A more ambitious version of this question would be: describe completely the cofree coalgebra as a quiver with (co)relations, i.e. as a subcoalgebra of a quiver algebra up to Morita equivalence. We conjecture that this cofree coalgebra is hereditary, and hence its comodule category is completely described by the Ext quiver.

\bigskip\bigskip\bigskip

\end{document}